\documentclass[final, 11pt,leqno,a4paper]{amsart}
\usepackage{amssymb,enumerate,color}
	\usepackage[all]{xy}   
\usepackage{amsthm}      
\usepackage{indentfirst}
\usepackage{paralist}
\usepackage{amsmath}
\usepackage{mathrsfs}
\sloppy
\usepackage{geometry}
\geometry{left=2.5cm,right=2.5cm,top=2.5cm,bottom=2.5cm}

\overfullrule 5pt

\theoremstyle{theorem}
\newtheorem{mainthm}{Theorem}
\newtheorem{thm}{Theorem}[section]
\newtheorem{lem}[thm]{Lemma}
\newtheorem{prop}[thm]{Proposition}
\newtheorem{cor}[thm]{Corollary}
\newtheorem{conj}[mainthm]{Conjecture}
\theoremstyle{definition}
\newtheorem{defn}[thm]{Definition}
\newtheorem{hyp}[thm]{Hypothesis}
\newtheorem{rmk}[thm]{Remark}

\numberwithin{equation}{section}

\newcommand{\IBr}{\operatorname{IBr}\nolimits}
\newcommand{\Irr}{\operatorname{Irr}\nolimits}
\newcommand{\PSL}{\operatorname{PSL}\nolimits}
\newcommand{\PSU}{\operatorname{PSU}\nolimits}
\newcommand{\GL}{\operatorname{GL}\nolimits}
\newcommand{\GU}{\operatorname{GU}\nolimits}
\newcommand{\SL}{\operatorname{SL}\nolimits}
\newcommand{\SU}{\operatorname{SU}\nolimits}

\newcommand{\Sp}{\operatorname{Sp}\nolimits}
\newcommand{\Spin}{\operatorname{Spin}\nolimits}
\newcommand{\CSp}{\operatorname{CSp}\nolimits}

\newcommand{\Aut}{\operatorname{Aut}\nolimits}

\newcommand{\Res}{\operatorname{Res}\nolimits}
\newcommand{\Ind}{\operatorname{Ind}\nolimits}

\newcommand{\J}{\operatorname{J}}
\newcommand{\ZZ}{\operatorname{Z}}
\newcommand{\Dec}{\operatorname{Dec}}

\newcommand{\F}{\mathbb{F}}
\newcommand{\barF}{\overline{\mathbb{F}}}
\newcommand{\Z}{\mathbb{Z}}
\newcommand{\cF}{\mathcal{F}}

\newcommand{\bG}{\mathbf{G}}
\newcommand{\tG}{\widetilde{G}}

\newcommand{\ts}{\tilde{s}}
\newcommand{\tH}{\widetilde{H}}
\newcommand{\tL}{\widetilde{L}}
\newcommand{\tC}{\widetilde{C}}
\newcommand{\C}{\mathrm{C}}
\newcommand{\tbG}{\widetilde{\mathbf{G}}}
\newcommand{\tbL}{\widetilde{\mathbf{L}}}
\newcommand{\tcC}{\widetilde{\mathcal{C}}}
\newcommand{\tcB}{\widetilde{\mathcal{B}}}
\newcommand{\bL}{\mathbf{L}}
\newcommand{\bN}{\mathbf{N}}
\newcommand{\bT}{\mathbf{T}}
\newcommand{\bH}{\mathbf{H}}

\newcommand{\cE}{\mathcal{E}}
\newcommand{\cB}{\mathcal{B}}
\newcommand{\cC}{\mathcal{C}}

\newcommand{\tchi}{\widetilde{\chi}}
\newcommand{\tphi}{\widetilde{\phi}}

\newcommand{\sC}{\mathscr{C}}
\newcommand{\cO}{\mathcal{O}}
\newcommand{\scU}{\mathscr{U}}
\newcommand{\tscU}{\widetilde{\mathscr{U}}}
\newcommand{\bm}{\mathbf{m}}

\newcommand{\eps}{\epsilon}
\newcommand{\vare}{\varepsilon}
\newcommand{\la}{\lambda}
\newcommand{\ti}{\times}

\raggedbottom

\begin{document}

\title[Unitriangular basic sets and coprime actions]{Unitriangular basic sets, Brauer characters\\ and coprime actions}

\date{\today}
\author{Zhicheng Feng}
\address{School of Mathematics and Physics, University of Science and Technology Beijing, Beijing 100083, China\\
{\rm Current address}: SICM and Department of Mathematics, Southern University of Science and Technology, Shenzhen 518055, China}
\email{fengzc@sustech.edu.cn}

\author{Britta Sp\"ath}
\address{School of Mathematics and Natural Sciences, University of Wuppertal, Gau{\ss}strasse 20, 42119 Wuppertal, Germany}
\email{bspaeth@uni-wuppertal.de}

\thanks{The first author gratefully acknowledges financial support by NSFC (11901028 and 11631001). The research of the second author was conducted in the framework of the 
	research training group	\emph{GRK 2240: Algebro-Geometric Methods in Algebra, Arithmetic and 
		Topology}, funded by the DFG}

\keywords{Brauer characters, unitriangular basic sets, coprime actions, inductive Brauer--Glauberman condition}

\subjclass[2020]{20C20, 20C33}

\maketitle

\begin{abstract} 	We show that the decomposition matrix of a given group $G$ is unitriangular, whenever $G$ has a normal subgroup $N$ such that the decomposition matrix of $N$  is unitriangular, $G/N$ is abelian  and certain characters of $N$ extend to their stabilizer in $G$. Using the recent result by Brunat--Dudas--Taylor establishing that unipotent blocks have a unitriangular decomposition matrix, this allows us to prove that  blocks of groups of  quasi-simple groups of Lie type have a unitriangular decomposition matrix, whenever they are related via Bonnaf\'e--Dat--Rouquier's equivalence to a unipotent block.  This is then applied to study the action of automorphisms on Brauer characters of finite quasi-simple groups. We use it to verify the  so-called {\it  inductive Brauer--Glaubermann condition}, that aims to establish a Glauberman correspondence for Brauer characters, given a coprime action.  
\end{abstract}

\section{Introduction}
Irreducible representations of finite groups over fields of positive characteristic $\ell$ make a quite difficult subject. Many important questions can nevertheless be studied via the ($\ell$-){\it decomposition matrix} relating for a given finite group $G$ the set $\IBr(G)$ of irreducible characters in characteristic $\ell$ to the set $\Irr(G)$ of better understood ordinary characters over $\Bbb C$. The entries of the decomposition matrix are indexed by $\Irr(G)\times \IBr(G)$. Many decomposition matrices of finite groups enjoy the property of so-called {\it unitriangularity}. Unitriangularity implies the existence of an injective map $$\IBr(G)\hookrightarrow \Irr(G)$$ whose image $\cB\subseteq \Irr(G)$ is a so-called basic set (see Definition~\ref{DefBasic} below) and such that the square submatrix of the $\ell$-decomposition matrix corresponding to $\cB\times \IBr(G)$ is lower unitriangular for some ordering of the rows and columns. This property seems to be shared by most finite simple groups and their central extensions except possibly groups of Lie type of characteristic $\ell$. Unitriangularity of $\ell$-decomposition matrices has been known for a long time for symmetric groups (see \cite[\S 7.1]{JK81}), general linear groups of characteristic $\not=\ell$ (Dipper \cite{D85a,D85b}), unitary groups (Geck \cite{Ge91}), and many other finite groups of Lie type for certain primes $\ell$. Geck (see \cite[p. 5]{G90}) made the following conjecture

\begin{conj}\label{GeckC} {\rm (Geck 1990)}
	Let $\ell$ be a prime. If $G$ is a finite group of Lie type of characteristic $\not=\ell$, then its $\ell$-decomposition matrix is unitriangular.
\end{conj}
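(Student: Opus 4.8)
The plan is to reduce Conjecture~\ref{GeckC} to the case of \emph{unipotent} blocks, for which unitriangularity is now available by the theorem of Brunat--Dudas--Taylor, and then to propagate unitriangularity outward along two kinds of moves: Morita equivalences between blocks, and extensions with abelian quotient.

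Since the decomposition matrix of a finite group is block-diagonal, unitriangularity may be checked one $\ell$-block at a time, and standard Clifford-theoretic reductions (the normal subgroup $[\bG,\bG]^F$ has abelian quotient in $\bG^F$, and decomposition matrices behave multiplicatively on direct products) let us concentrate on the quasi-simple groups of Lie type and their central extensions. So let $\bG$ be a simple algebraic group of simply connected type with a Frobenius endomorphism $F$, let $s$ be a semisimple $\ell'$-element of the dual group, and let $B$ be the union of $\ell$-blocks of $\bG^F$ lying in the Lusztig series labelled by $s$. When the centralizer of $s$ in the dual group is contained in a proper $F$-stable Levi subgroup, the theorem of Bonnaf\'e--Dat--Rouquier provides a (splendid) Morita equivalence between $B$ and a union $B'$ of \emph{unipotent} $\ell$-blocks of a smaller group of Lie type. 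A Morita equivalence induces bijections $\Irr(B)\leftrightarrow\Irr(B')$ and $\IBr(B)\leftrightarrow\IBr(B')$ under which corresponding characters have equal decomposition numbers, so it transports a unitriangular decomposition matrix of $B'$ to one of $B$; together with Brunat--Dudas--Taylor applied to $B'$, this settles every block whose semisimple label $s$ is not quasi-isolated.

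Two further ingredients remain. First, one must climb from $\bG^F$ of simply connected type to an arbitrary quasi-simple group of Lie type — a central quotient $\bG^F/Z$, possibly enlarged by diagonal automorphisms. Here I would apply the reduction theorem of this paper: if $N\trianglelefteq H$ with $H/N$ abelian, if $N$ has a unitriangular decomposition matrix, and if the relevant characters of $N$ extend to their stabilizers in $H$, then $H$ too has a unitriangular decomposition matrix. Applying this along the short chain linking $\bG^F$, its central quotients, and the corresponding overgroups with diagonal automorphisms — in the ``quotient'' direction via ordinary Clifford theory and in the ``extension'' direction via the theorem itself — lifts the simply connected case to the general one, the extendibility hypothesis being supplied by the known extension results for Brauer characters of groups of Lie type. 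Second, the quasi-isolated blocks must be treated by hand: there the centralizer of $s$ is not contained in any proper Levi, Bonnaf\'e--Dat--Rouquier yields at best an equivalence with a block of a possibly disconnected reductive group, and one works through the finitely many resulting configurations — essentially unipotent-type blocks of Levi subgroups of exceptional type — individually.

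The step I expect to be the real obstacle is exactly this last one: quasi-isolated blocks at bad primes $\ell$. There the reduction does not land in a genuine unipotent block to which Brunat--Dudas--Taylor applies, the ambient groups are non-connected or of exceptional type, and no uniform argument seems available; a case-by-case analysis — or a strengthening of the unipotent-block theorem to cover these twisted situations — appears unavoidable. A second, more technical hurdle is verifying the extendibility hypothesis of the reduction theorem uniformly across all Lie types and all primes, which is itself the delicate Clifford-theoretic bookkeeping that the paper's main theorem is designed to encapsulate.
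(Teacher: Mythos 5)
This statement is an open \emph{conjecture}; the paper you are comparing against does not prove it, and neither does your proposal — a point you implicitly concede in your last paragraph. What the paper actually does (Theorem~\ref{thm_appl}, Propositions~\ref{appl_C} and~\ref{RedIso}) is precisely the reduction strategy you sketch: apply the Bonnaf\'e--Dat--Rouquier Morita equivalence to transport the problem to a unipotent context, invoke Brunat--Dudas--Taylor (or, in type $\mathsf A$, Denoncin) there, and use the ``going-up'' Theorem~\ref{TriangUp} to propagate unitriangularity across abelian quotients. So as a reduction your outline matches the paper's, and your diagnosis of the remaining obstacle — quasi-isolated blocks at bad primes — is essentially the content of the paper's Conjecture~3 and Proposition~\ref{RedIso}.

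Two places, though, where your account is imprecise enough to mislead. First, the Bonnaf\'e--Dat--Rouquier equivalence lands not in a unipotent block of $\bL_s^F$ but in a sum of blocks of an intermediate group $\bN_s^F$ with $\bN_s^F/\bL_s^F\cong\C_{\bG^*}(s)/\C^\circ_{\bG^*}(s)$ covering the unipotent blocks of $\bL_s^F$; so even in the \emph{non}-isolated case the ``going-up'' theorem is already needed to pass from $\bL_s^F$ to $\bN_s^F$, not just to move between central variants of $\bG^F$. Your formulation (``a union $B'$ of unipotent $\ell$-blocks of a smaller group'') hides this step, and with it the nontrivial Clifford-theoretic input (extendibility of unipotent-block characters of $\bL_s^F$ to their stabilizer in $\bN_s^F$, which the paper gets from Ruhstorfer). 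Second, the condition on $s$ that makes the reduction work is that $\C^\circ_{\bG^*}(s)$ is a \emph{Levi} subgroup; the complementary case, where the connected centralizer is contained in no proper Levi, is the \emph{isolated} case — the paper's terminology — and it is this, not ``quasi-isolated'' per se, that survives as the residual class. In addition the paper must impose either that $\C_{\bG^*}(s)/\C^\circ_{\bG^*}(s)$ be cyclic or that $\ell\nmid(q^2-1)$ because Ruhstorfer's extension of Bonnaf\'e--Dat--Rouquier needs one of these hypotheses, a technical limitation your proposal does not register. Properly stated, then, what you have is not a proof of the conjecture but a (correct, and essentially identical to the paper's) conditional reduction of it.
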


This question can of course be split into the various $\ell$-blocks of $G$.
A recent breakthrough was the proof by Brunat--Dudas--Taylor of the unitriangularity of the unipotent $\ell$-blocks of finite groups of Lie type under mild restrictions on $\ell$, see \cite[Thm.~A]{BDT20}. The basic set is provided in that case by the set of unipotent characters.

Suppose that $\bG$ is a connected reductive group with an $\F_q$-structure given by a Steinberg endomorphism $F : \bG \to \bG$ where $\ell\nmid q$.
According to a result of Bonnaf\'e--Dat--Rouquier \cite[Thm.~1.1]{BDR17},
most $\ell$-blocks of $\bG^F$
are Morita equivalent to a block of a group $\bN^F$ which covers a unipotent block of a normal subgroup $\bL^F$, where $\bL$ is an $F$-stable Levi subgroup of $\bG$ normal in some $F$-stable subgroup $\bN$, with abelian quotient group $\bN^F /\bL^F$ .

Since Morita equivalences over an adequate local ring preserve decomposition matrices, this leads naturally to the question of extending the unitriangularity property of decomposition matrices from $\bL^F$ to $\bN^F$.
Our Theorem \ref{TriangUp} below shows such a ``going-up" property for unitriangular basic sets in the general situation of a normal inclusion of finite groups $L\unlhd N$ with abelian quotient. 
Applying this in connection with the result of Bonnaf\'e--Dat--Rouquier, we can show that the unitriangularity of unipotent blocks proven by Brunat--Dudas--Taylor implies that many more non-unipotent blocks of finite reductive groups have a unitriangular decomposition matrix. For good primes $\ell$, this essentially reduces the question to so-called {\it 
isolated} non-unipotent $\ell$-blocks. 

Unitriangularity of basic sets is a key asset to study the action of automorphisms and subsequent questions of extendibility of elements of $\IBr(G)$ for $G$ an abstract finite group. This are crucial questions to tackle counting conjectures on modular characters through the recent reduction theorems leading to related questions on finite quasi-simple groups. We focus in this paper on a conjecture due to Gabriel Navarro (see \cite{Na94}).
If $A$ is a finite groups acting coprimely on $G$ (i.e., $\gcd(|A|,|G|)=1$) via automorphisms, the \emph{Glauberman--Isaacs correspondence} is a bijection $$\Irr_A(G)\to \Irr(\C_G(A))$$ between the set of irreducible $A$-invariant characters
of $G$ and the set of irreducible characters of the subgroup $\C_G(A)$, see \cite[\S 13]{Is76}. Navarro's question asks for replacing $\Irr$ by $\IBr$ in the above.

\begin{conj}\label{conj-corr} {\rm (Navarro 1994)}
	Let $\ell$ be a prime. Suppose that $A$ acts coprimely via automorphisms on $G$. Then
	$$|\IBr_A(G)|=|\IBr(\C_G(A))|,$$ where
	$\IBr_A(G)$ is the set of irreducible $\ell$-Brauer characters of $G$ fixed by $A$.
\end{conj}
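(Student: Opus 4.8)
The plan is to obtain Conjecture~\ref{conj-corr} from the recent reduction theorem alluded to above, which asserts that the conjecture holds for every finite group on which a coprime group $A$ acts, provided every finite non-abelian simple group satisfies the \emph{inductive Brauer--Glauberman condition}. Thus it suffices to verify that condition for all finite quasi-simple groups, and the results on unitriangular basic sets proved above are tailored to make this feasible for groups of Lie type. Recall that, for a quasi-simple group $X$, the inductive condition demands --- for every prime $\ell$ and every coprime automorphism group $A$ --- a bijection $\Omega\colon\IBr(X)\to\IBr(\C_X(A))$ which is equivariant under the remaining (non-coprime) automorphisms, compatible with central characters, and, crucially for the inductive step, induces isomorphisms of the relevant character triples, so that $\Omega$ survives passage to central extensions and to the wreath-type constructions occurring in the reduction.

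First I would settle the quasi-simple groups that are not of Lie type in non-defining characteristic: alternating and sporadic groups together with their covering groups, the small exceptional cases, and the groups of Lie type in their defining characteristic. In the last family $\IBr(X)$ is parametrized by restriction of a basic set of unipotent/Steinberg type, and the decomposition data together with the automorphism action are explicit enough to check the condition directly, using the existing literature where possible.

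The heart of the argument is a quasi-simple group $X$ of Lie type in characteristic $\neq\ell$. By Bonnaf\'e--Dat--Rouquier's equivalence \cite{BDR17} together with the going-up Theorem~\ref{TriangUp} and the unitriangularity of unipotent blocks \cite{BDT20}, every $\ell$-block of $X$ outside the isolated ones --- and, for good $\ell$, essentially all of them --- has a unitriangular decomposition matrix, and the basic set $\cB\subseteq\Irr(X)$ can be chosen stable under the full automorphism group of $X$, hence in particular under $A$. Since the Glauberman--Isaacs correspondence on ordinary characters is $\Aut(X)$-equivariant, it restricts to a bijection between $\cB\cap\Irr_A(X)$ and a basic set of $\C_X(A)$; composing with the two unitriangular parametrizations $\IBr(X)\leftrightarrow\cB$ and $\IBr(\C_X(A))\leftrightarrow(\text{corresponding basic set})$ produces the sought bijection $\Omega$. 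It then remains to check that $\Omega$ inherits equivariance (for field and graph automorphisms, and for diagonal automorphisms via the ambient group $\tG$) and the character-triple compatibility from the ordinary Glauberman correspondence and the canonical construction of $\cB$.

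The main obstacle is exactly this last verification, together with the blocks not covered above. Unitriangularity yields the bijection $\IBr(X)\leftrightarrow\cB$ but not, for free, that $\cB$ can be made equivariant under \emph{all} relevant automorphisms \emph{simultaneously} with being $A$-stable, nor that the resulting $\Omega$ is an isomorphism of character triples rather than just a bijection; controlling the underlying cohomology classes and the scalars arising from extendibility of the central characters and of the unipotent characters is where most of the work lies --- and is the reason the extendibility hypotheses already appear in Theorem~\ref{TriangUp}. In addition, the isolated blocks and bad-prime situations escape the Bonnaf\'e--Dat--Rouquier reduction and must be treated by separate type-by-type arguments. As the character-triple part of the inductive condition is not currently known for all quasi-simple groups, what one obtains unconditionally is the inductive Brauer--Glauberman condition --- and hence Conjecture~\ref{conj-corr} --- for all groups whose blocks are covered by the unitriangularity theorem and for which the needed equivariance can be arranged.
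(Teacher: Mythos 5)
Conjecture~\ref{conj-corr} is stated in the paper precisely as an open conjecture; the paper offers no unconditional proof of it and neither do you, a point you yourself concede in your final paragraph. What the paper actually establishes are Theorem~\ref{main-thm1} (Conjecture~\ref{conj-corr} follows if Conjecture~\ref{GeckC} holds through suitably compatible maps for $\widetilde\bG^F$ and $\bG^F$) and Corollary~4 (the special case where $G$ has abelian Sylow $3$-subgroups). Your outline correctly identifies the overall strategy --- reduce to the (iBG) condition on quasi-simple groups via \cite{SV16}, dispatch the non--Lie-type and defining-characteristic cases via \cite{NST17}, then handle non-defining characteristic through unitriangular basic sets combined with \cite{BDR17} and \cite{BDT20}.

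That said, your account deviates from the paper in several technical respects. The (iBG) condition (Definition~\ref{iBGC}) does not require a bijection $\IBr(X)\to\IBr(\C_X(A))$; it requires, for each coprime $B\le\Aut(G)$, a $\Gamma$-equivariant bijection $\Omega_B\colon\IBr_B(G)\to\IBr_B(CZ)$ --- note the restriction to $B$-invariant Brauer characters on both sides and the target $CZ$ rather than $C$ --- together with the central isomorphism of modular character triples~(\ref{equaiton-22}) and the fake Galois action in part~(II). Part~(II) is already known for all simple groups by Farrell--Ruhstorfer \cite{FR21}, which you do not mention, and this is precisely why the paper can focus exclusively on part~(I). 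More importantly, the ordinary-character bijection that enters the paper's criterion (Theorem~\ref{criforlie} and Theorem~\ref{corr-ord-char}) is not the Glauberman--Isaacs correspondence restricted to a basic set; it is built directly from the Jordan decomposition of characters and a descent argument between $\mathbb G(q)$ and $\mathbb G(q_0)$ (via \cite{CS19}), whose equivariance and compatibility with central characters are then checked by hand. Appealing to Glauberman--Isaacs would not obviously give the compatibility with linear characters of $\tG/G$ or with the $\J^{\tG}_G$ subgroups demanded in Theorem~\ref{criforlie}(iv), which is exactly the mechanism through which Proposition~\ref{twostepext} produces the required central isomorphism of character triples. Finally, you gloss over the fact that even with all the unitriangularity input, Theorem~\ref{mainthm} requires the full strength of Hypothesis~\ref{unitri} (equivariance under $D$ and under $\Irr(\tbG^F/\bG^F)_{\ell'}$, plus compatibility with the Frobenius descent map $\tilde\Xi$), and that this hypothesis is currently verified only in type~$\mathsf A$ (Theorem~\ref{for-type-A}) and in a few further cases (e.g.\ type~$\mathsf C$ with $\ell=2$, Theorem~\ref{typeC-2}).
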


Brauer's argument on the character table~implies Conjecture~\ref{conj-corr} whenever $A$ acts by a cyclic group
(see for instance~\cite[Thm.~3.1]{NST17}).
Using the classification of the finite simple groups we then get that
Conjecture~\ref{conj-corr} holds for any quasi-simple group $G$.

In~2016, the second author and Vallejo~\cite{SV16} reduced Conjecture~\ref{conj-corr} to a problem on quasi-simple groups, proving that if $A$ acts coprimely on $G$, and every non-abelian simple group involved in $G$ satisfies the so-called \emph{inductive Brauer--Glauberman (iBG) condition} (see Definition~\ref{iBGC}), then $G$ and $A$ satisfy Conjecture~\ref{conj-corr}. The (iBG) condition was verified by
Navarro, the second author and Tiep~\cite{NST17} for certain families of simple groups, including  simple groups not of Lie type,  simple groups of Lie type in defining characteristic and simple groups with cyclic outer automorphism groups.
In addition,  Farrell and Ruhstorfer~\cite{FR21} proved a part of the (iBG) condition, called the \emph{fake Galois actions}, for all simple groups.

In this paper, we consider the (iBG) condition for simple groups of Lie type in non-defining characteristic and prove the following.
Suppose that $\bG$ is a simple simply connected algebraic group and $F : \bG \to \bG$ a Steinberg endomorphism endowing $\bG$ with an $\F_q$-structure. Let $\widetilde \bG$ be a regular embedding of $\bG$ and
we extend $F$ to be a Steinberg endomorphism of $\widetilde \bG$ (see \cite[1.7.5]{GM20}).

\begin{mainthm}\label{main-thm1}
	If both $\widetilde \bG^F$ and $\bG^F$ satisfy Conjecture~\ref{GeckC} for any prime $\ell\nmid q$ through maps compatible with linear characters and field automorphisms (see Hypothesis~\ref{unitri}) whenever $\mathrm{Out}(\bG^F)$ is non-cyclic, then Conjecture~\ref{conj-corr} is true.
\end{mainthm}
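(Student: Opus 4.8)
The plan is to deduce Conjecture~\ref{conj-corr} from the inductive Brauer--Glauberman condition via the reduction theorem of the second author and Vallejo~\cite{SV16}, and to verify that condition for the family of simple groups still open. By~\cite{SV16} it suffices to establish the inductive Brauer--Glauberman condition of Definition~\ref{iBGC} for every non-abelian simple group $S$ and every prime $\ell$; the cases where $S$ is not of Lie type, is of Lie type in defining characteristic, or has cyclic outer automorphism group are covered by~\cite{NST17}, and the ``fake Galois action'' ingredient is known in general by~\cite{FR21}. It remains to treat $S=\bG^F/\ZZ(\bG^F)$ at a prime $\ell\nmid q$ with $\mathrm{Out}(\bG^F)$ non-cyclic; after handling directly the finitely many $S$ with exceptional Schur multiplier, I would take $\bG^F$ for the universal covering group of $S$, and note that if $\ell\nmid|\bG^F|$ the statement reduces to its ordinary-character counterpart already established in~\cite{NST17}, so also $\ell\mid|\bG^F|$. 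A first, purely group-theoretic observation is that any group $A$ of automorphisms of $\bG^F$ with $\gcd(|A|,|\bG^F|)=1$ contains no nontrivial inner, diagonal or graph automorphism---all of which have order dividing $|\bG^F|$---so $A$ is cyclic and generated by a field automorphism; hence $C:=\C_{\bG^F}(A)=\bG^{F_0}$ for a Steinberg endomorphism $F_0$ of $\bG$ with $F=F_0^{|A|}$ and $\C_{\tbG^F}(A)=\tbG^{F_0}$ for the associated regular embedding, and a short case analysis shows that $\mathrm{Out}(\bG^{F_0})$ is again non-cyclic, so that Hypothesis~\ref{unitri} is available for $\bG^F$, $\tbG^F$, $\bG^{F_0}$ and $\tbG^{F_0}$ simultaneously.

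The idea is then to transport the desired correspondence across the \emph{ordinary} Glauberman correspondence $\pi_A\colon\Irr_A(\bG^F)\to\Irr(C)$ by means of these unitriangular basic sets. Write $\Lambda_G\colon\IBr(\bG^F)\to\cB(\bG^F)$ and $\Lambda_C\colon\IBr(C)\to\cB(C)$ for the bijections onto basic sets supplied by Hypothesis~\ref{unitri}. A first substantial step will be to show that $\pi_A$ restricts to a bijection from $\cB(\bG^F)\cap\Irr_A(\bG^F)$ onto $\cB(C)$ (possibly after a compatible choice of basic sets); here one combines Hypothesis~\ref{unitri} for $\bG^F$ and $\bG^{F_0}$ with the compatibility of the ordinary Glauberman correspondence under coprime field automorphisms with Lusztig's Jordan decomposition, with the series $\cE(\bG^F,s)$, and with the unipotent characters, using if necessary the explicit form of the underlying basic sets from~\cite{BDT20,BDR17} and Theorem~\ref{TriangUp}. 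Granting this, define $\Pi_A:=\Lambda_C^{-1}\circ\pi_A\circ\Lambda_G\colon\IBr_A(\bG^F)\to\IBr(C)$. Its $\C_{\Aut(\bG^F)}(A)$-equivariance follows because each factor is equivariant: $\Lambda_G$ and $\Lambda_C$ under field automorphisms and under the linear characters of the regular embedding by the relevant clauses of Hypothesis~\ref{unitri}, and $\pi_A$ because the ordinary Glauberman correspondence is canonical while automorphisms of $\bG^F$ centralising $A$ restrict to automorphisms of $C$. Compatibility of $\Pi_A$ with central characters and with $\ell$-blocks is obtained the same way, from the corresponding properties of $\Lambda_G$, $\Lambda_C$ (blockwise and respecting $\IBr(\ZZ(\bG^F))$-labels) and of $\pi_A$ (preserving central characters and sending blocks to blocks with isomorphic defect groups).

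The main obstacle will be the remaining, cohomological part of Definition~\ref{iBGC}: for $\chi\in\IBr_A(\bG^F)$ with $\chi':=\Pi_A(\chi)$ one must produce an isomorphism of character triples relating $\chi$, inside $\bG^F$ extended by its stabiliser in $\C_{\Aut(\bG^F)}(A)$, to $\chi'$, inside $C$ extended by its stabiliser. My plan for this is to pass to the regular embedding. Since $\tbG^F/\bG^F$ is abelian and $\Lambda_G$ is compatible with its linear characters, the Clifford theory of $\chi$ over $\IBr(\tbG^F)$ matches that of $\Lambda_G(\chi)$ over $\Irr(\tbG^F)$, so the projective-representation obstruction attached to $\chi$ inside $\tbG^F$ extended by graph and field automorphisms coincides with the one attached to the ordinary character $\Lambda_G(\chi)$; one then invokes the ordinary-character analogue of the condition for groups of Lie type---available from~\cite{NST17} together with the equivariance of Jordan decomposition---to carry this datum across $\pi_A$ to $\tbG^{F_0}$, and transports it back down to $C$ by the same matching of Clifford theories for $\Lambda_C$ and Hypothesis~\ref{unitri} for $\tbG^{F_0}$. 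Making each of these comparisons of character triples precise, and verifying that the equivariance and compatibility clauses built into Hypothesis~\ref{unitri} are exactly what makes every intermediate isomorphism canonical, is where the bulk of the work lies. Once this is done, $\Pi_A$ witnesses the inductive Brauer--Glauberman condition for $S$, and Conjecture~\ref{conj-corr} follows by~\cite{SV16}.
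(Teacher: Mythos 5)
Your top-level strategy matches the paper's: invoke the reduction theorem of Sp\"ath--Vallejo (Theorem~\ref{main-thm-SV16}), dispose of the cyclic-outer, non-Lie-type, defining-characteristic and type~$\mathsf A$ cases via \cite{NST17} and Theorem~\ref{for-type-A}, and then use the unitriangular maps of Hypothesis~\ref{unitri} as a bridge between Brauer and ordinary characters for the remaining types. But the step you describe as ``the bulk of the work'' --- the central isomorphism of modular character triples --- is exactly where your argument leaves a genuine gap and where the paper's proof has actual content, via Theorem~\ref{mainthm} which in turn rests on Theorem~\ref{criforlie}, Proposition~\ref{twostepext} and Theorem~\ref{split-char}.

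Concretely, three things are missing or replaced by vague appeals. First, you build $\Pi_A$ from the ordinary Glauberman correspondence $\pi_A$, but the iBG condition demands compatibility of the bijection with $\IBr(\tbG^F/\bG^F)$-multiplication, with restriction to $\ZZ(\widetilde\bG^F)$, and with the $D$-action (conditions (iv)(a)--(c) of Theorem~\ref{criforlie}). These are not automatic consequences of the Glauberman correspondence being ``canonical''; the paper replaces $\pi_A$ by the explicitly constructed $D$-equivariant bijection $\widetilde\Xi_B$ of Theorem~\ref{corr-ord-char}, built from an $F_1$-equivariant Jordan decomposition plus \cite[Cor.~3.6]{CS19}, precisely so these compatibilities can be read off. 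You cite ``the compatibility of the ordinary Glauberman correspondence ... with Lusztig's Jordan decomposition'' as something to combine in, but it is the substance of the ordinary-level part of the proof, not a black box. Second, you never mention the $A(\infty)$ property (Theorem~\ref{split-char}), which the paper needs to check conditions (ii) and (iii) of Theorem~\ref{criforlie} --- the existence of $\tbG^F D$-split stabilizers and extensions to $\bG^F \rtimes D_{\chi_0}$ --- together with the transfer of these extension properties from ordinary to Brauer characters along a unitriangular basic set (see Remark~\ref{RemIBr}). Third, for the central isomorphism itself you say ``pass to the regular embedding and match Clifford theories,'' which is the idea, but the actual mechanism is Proposition~\ref{twostepext}: one constructs projective representations of $(A_\theta,N,\theta)$ and $(B_\varphi,M,\varphi)$ with factor sets controlled by Lemma~\ref{twexten}, and verifies that they agree on $B_\varphi\times B_\varphi$ by comparing the linear-character twists $\mu_{x_2}$ and $\mu'_{x_2}$. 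Invoking ``the ordinary-character analogue ... available from \cite{NST17}'' does not do this: \cite{NST17} does not provide a ready-made modular central isomorphism for the types in question, and the factor-set comparison is a genuinely new computation at the Brauer-character level. So the proposal identifies the correct skeleton but, as you partly acknowledge, the passage from the bijection $\Pi_A$ to the relation $\succ_c$ of Definition~\ref{defn-centiso} is the core of the theorem and is not supplied.
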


Therefore, we verify the (iBG) condition assuming Conjecture~\ref{GeckC}.
As a consequence, we prove (iBG) for simple groups of Lie type~$\mathsf A$ and any prime~$\ell$ (see Theorem \ref{for-type-A}). We also obtain (iBG) condition for simple groups of type $\mathsf C$ and the prime~2 (see Theorem~\ref{typeC-2}), leading also to 

\medskip\noindent {\bf Corollary 4.} {\it
	If $G$ has abelian Sylow $3$-subgroups,
	then  Conjecture~\ref{conj-corr}  holds for any $A$ acting coprimely on $G$ and any prime $\ell$.}

\medskip

The structure of this paper is as follows.
In \S \ref{Preliminiaries} we give some notation and preliminaries, recalling some basic statements on coprime actions and Brauer characters. In \S \ref{sec-decom-CLI} we establish several ``going-up" properties for unitriangular basic sets.
Then we recall the definition of the (iBG) condition from~\cite{SV16} and give a criterion for the (iBG) condition in \S\ref{sec-iBGC}.
The (iBG) condition for simple groups of Lie type in non-defining characteristic is reduced to a question on untriangular basic sets in \S\ref{SG-Lie} and Theorem~\ref{main-thm1} and \ref{for-type-A} are also proven there. 
In  \S\ref{application}, we give an application of Theorem~\ref{main-thm1} and prove Corollary 4. 
Finally, in  \S\ref{sec-unitri-Lie} we show that a lot more blocks of finite reductive groups than the ones treated in \cite{BDT20} satisfy Conjecture~\ref{GeckC} (see Theorem~\ref{thm_appl}).

\section{Preliminaries}\label{Preliminiaries}

\subsection{Some notation} The notation for ordinary characters mostly follows~\cite{Is76} while the notation for Brauer characters mostly follows~\cite{Na98}.
All Brauer characters are considered for the fixed prime $\ell$.
For a given finite group $G$, we denote by $\IBr(G)$ the set of irreducible ($\ell$-)Brauer characters of $G$ seen as $G$-central $\mathbb C$-valued functions on the set $G_{\ell '}$ of $\ell$-regular elements of $G$.
We denote the restriction of $\chi\in \Irr(G)\cup\IBr(G)$ to a subgroup $H\le G$ by $\Res^G_H(\chi)$. The set of irreducible components of $\Res^G_H(\chi)$ is denoted by $\Irr (H\mid\chi)$ or $\IBr (H\mid\chi)$ according to $\chi$ being an ordinary or a Brauer character. Whenever $\psi\in \Irr(H)\cup\IBr(H)$, one denotes by $\Ind_H^G(\psi)$ the induced character. Similarly its set of irreducible components is denoted by $\Irr(G\mid\psi)$ or $\IBr(G\mid\psi)$ according to $\psi\in\Irr(H)$ or $\psi\in\IBr(H)$.

Assume now $N\unlhd G$. We sometimes identify the (Brauer) characters of $G/N$ with the (Brauer) characters of $G$ whose kernel contains $N$. 

For subsets $\mathcal N\subseteq \IBr(N)$ and $\mathcal G\subseteq \IBr(G)$, we define $$\IBr(G\mid\mathcal N ):= \bigcup\limits_{\chi\in \mathcal N }\IBr(G\mid \chi)\text{  and  }
\IBr(N\mid\mathcal G):= \bigcup\limits_{\psi\in \mathcal G}\IBr(N\mid \psi).$$
Similarly, if $\mathcal N\subseteq \Irr(N)$ and $\mathcal G\subseteq \Irr(G)$, then we also define the set $\Irr(G\mid\mathcal N )$
and $\Irr(N\mid\mathcal G)$ analogously.

If a group $A$ acts on a finite set $X$, we denote by $A_x$ the stabiliser of $x\in X$ in $A$.
If $A$ acts on a finite group $G$ via automorphisms, there is a natural action of $A$ on $\Irr(G)$ (or $\IBr(G)$, resp.) given by ${}^{a^{-1}}\chi(g)=\chi^a(g)=\chi({}^ag{})$ for every $g\in G$, $a\in A$ and $\chi\in \Irr(G)$ (or  $\chi\in \IBr(G)$, resp.).
For $B\le A$, we denote by $\IBr_{B}(G)$ the set of $B$-invariant irreducible Brauer characters of $G$.
For $\chi\in \IBr(G)$ and $\psi\in\IBr(N)$,
we define
$\IBr_{B}(N\mid \chi)=\IBr(N\mid \chi)\cap \IBr_{B}(N),$
and $\IBr_{B}(G\mid \psi)=\IBr(G\mid \psi)\cap \IBr_{B}(G).$
Similarly,  we denote by $\Irr_{B}(G)$ the set of $B$-invariant irreducible characters of $G$.
Suppose that $N\unlhd G$ is stable under the action of $B$.
For $\chi\in \Irr(G)$ and $\psi\in\Irr(N)$,
we define
$\Irr_{B}(N\mid \chi)=\Irr(N\mid \chi)\cap \Irr_{B}(N),$
and $\Irr_{B}(G\mid \psi)=\Irr(G\mid \psi)\cap \Irr_{B}(G).$

\subsection{Clifford theory with abelian quotient} Suppose $N\unlhd G$ are finite groups with abelian $G/N$.
Let $\chi\in\Irr(G)$ (or $\chi\in\IBr(G)$, resp.) and $\theta\in\Irr(N\mid\chi)$ (or $\theta\in\IBr(N\mid\chi)$, resp.).
In the case where $\theta\in\Irr(N)$, then $\Irr(G/N)$  acts on $\Irr(G\mid\theta)$  by multiplication and
$\Irr(G\mid\theta)$ is an $\Irr(G/N)$-orbit.
Moreover, if $\theta$ extends to $G_\theta$ then
\begin{equation}\label{sta-irr}
G_\theta= \bigcap\limits_{\{\lambda\in\Irr(G/N)\ |\ \chi\lambda=\chi\}} \ker(\lambda) ,
\addtocounter{thm}{1}\tag{\thethm}
\end{equation} as can be seen from Gallagher's theorem (\cite[6.17]{Is76}) and the uniqueness in Clifford correspondence.
In the case when $\theta\in\IBr(N)$ extends to $G_\theta$, 
then $\IBr(G/N)$ acts transitively on $\IBr(G\mid\theta)$ by multiplication (this is Problem 6.2 of \cite{Is76} for Brauer characters).
We define $\J^G_N(\chi)$ as the smallest subgroup of $G$ containing $G_\theta $ and such that $G/\J^G_N(\chi)$ is an $\ell$-group.
According to~\cite[Lemma~2.14]{BS20}
\begin{equation}\label{sta-ibr}
G_\theta=\bigg( \bigcap\limits_{\{\lambda\in\IBr(G/N)\ |\ \chi\lambda=\chi\}} \ker(\lambda)   \bigg)\cap \J^G_N(\chi).
\addtocounter{thm}{1}\tag{\thethm}
\end{equation}

In this paper we will mainly be concerned with normal inclusions $N\unlhd G$ such that $G/N$ is abelian and all elements of $\Irr(N)\cup\IBr(N)$ extend to their stabilizer in $G$. It is easy to see from Clifford theory that this last hypothesis is equivalent to $\Res^G_N(\chi)$ being multiplicity-free for every $\chi\in \Irr(G)\cup\IBr(G)$.

\subsection{Coprime actions} We gather here some more technical statements related with the representation theory of a finite group acted upon by another finite group when the two groups have coprime orders.

\begin{prop}\label{rela-G-CN}
Let $A$ act coprimely via automorphisms on $G$, where $A$ and $G$ are finite groups, and let $N\unlhd G$ be $A$-stable.
Let $C=\C_G(A)$. 
\begin{enumerate}[\rm(i)]
	\item Let $\chi\in\IBr_A(G)$. Then $\Res_N^G(\chi)$  has an $A$-invariant irreducible constituent and $C$ acts transitively on $\IBr_A(N\mid \chi)$.
	
	Now assume further that $G/N$ is abelian.	Then the following statements hold.
	\item Let $\psi\in\IBr_A(N)$. Then $\Ind_N^G(\psi)$  has an $A$-invariant irreducible constituent and $\IBr_A(G/N)$ acts transitively on $\IBr_A(G\mid \psi)$.
		\item If $G=CN$, then $\IBr_A(G)=\IBr(G\mid \IBr_A(N))$ and $\IBr_A(N)=\IBr(N\mid \IBr_A(G))$.
	  \item Assume that for every $\chi\in\IBr(G)$,
	  $\Res^G_N(\chi)$ is multiplicity-free. Then for $\chi\in\IBr_A(G)$, the set $\IBr_A(CN\mid\chi)$ 
	  is a singleton.
	  If we write $\IBr_A(CN\mid\chi)=\{\Omega(\chi)\}$, then $\chi\mapsto\Omega(\chi)$ gives a bijection between $\IBr_A(G)$ and $\IBr_A(CN)$.
	In addition, $\Omega(\chi\lambda)=\Omega(\chi)\Res^G_{CN}(\lambda)$ for every $\chi\in\IBr_A(G)$ and every $\lambda\in\IBr_A(G/N)$.
  \end{enumerate}
\end{prop}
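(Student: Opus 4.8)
The plan is to deduce all four parts from one principle on coprime actions: if a finite group $H$ acts transitively on a finite set $\Omega$ and $A$ acts on $\Omega$ coprimely and compatibly with the $H$-action, then $\Omega^A\neq\emptyset$ and $\C_H(A)$ acts transitively on $\Omega^A$ (Glauberman's lemma together with its standard strengthening, see \cite[Ch.~13]{Is76}). Besides this I will use that the Clifford correspondence and multiplication by linear characters are $A$-equivariant, that restriction of linear (Brauer) characters from $G/N$ to $K/N$ is surjective whenever $N\le K\le G$ and $G/N$ is abelian, and the coprime-module facts $H^1(A,M)=0$ and $|\widehat{M}^A|=|M^A|$ for a finite abelian group $M$ with coprime $A$-action. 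Part~(i) is then immediate: $G$ acts transitively on $\IBr(N\mid\chi)$ by Clifford theory, $A$ stabilises this set since it fixes $\chi$, and the principle applies with $H=G$, $\Omega=\IBr(N\mid\chi)$, $\C_G(A)=C$. For part~(ii), the Clifford correspondence $\IBr(G_\psi\mid\psi)\to\IBr(G\mid\psi)$ is $A$-equivariant (as $A$ stabilises $G_\psi$), so one reduces to $\psi$ being $G$-invariant; then $\IBr(G/N)$ acts transitively on $\IBr(G\mid\psi)$ by Clifford theory with abelian quotient (the case where $\psi$ need not extend reduces to the extending case recalled in \S\ref{Preliminiaries} via a character triple isomorphism), so applying the principle to the abelian group $H=\IBr(G/N)$ acting coprimely and compatibly on $\IBr(G\mid\psi)$ yields an $A$-invariant constituent and transitivity of $\C_{\IBr(G/N)}(A)=\IBr_A(G/N)$ on the $A$-fixed ones; to pass back from $G_\psi$ to $G$ one lifts an $A$-invariant conjugating linear character of $G_\psi/N$ to one of $G/N$, which is possible since $\IBr_A(G/N)\to\IBr_A(G_\psi/N)$ is onto (restriction is $A$-equivariant and surjective, and $H^1(A,-)$ of the kernel vanishes).

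For part~(iii): since $C=\C_G(A)$ and $G=CN$, one has $G/N\cong C/(C\cap N)$, so $A$ acts trivially on $G/N$, hence on $\IBr(G/N)$. The inclusions $\IBr_A(G)\subseteq\IBr(G\mid\IBr_A(N))$ and $\IBr_A(N)\subseteq\IBr(N\mid\IBr_A(G))$ follow from (i) and (ii). Conversely, if $\chi\in\IBr(G\mid\psi)$ with $\psi\in\IBr_A(N)$, then $A$ permutes the single $\IBr(G/N)$-orbit $\IBr(G\mid\psi)$; its $A$-fixed subset is nonempty by (ii) and, $A$ acting trivially on $\IBr(G/N)$, is $\IBr(G/N)$-invariant, hence equals $\IBr(G\mid\psi)$, so $\chi\in\IBr_A(G)$. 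Similarly, if $\psi\in\IBr(N\mid\chi)$ with $\chi\in\IBr_A(G)$, then $\IBr(N\mid\chi)$ is a single $C$-orbit (conjugation by $N$ is trivial on $\IBr(N)$ and $G=CN$); as $A$ centralises $C$, the $A$-fixed subset, nonempty by (i), is all of it, so $\psi\in\IBr_A(N)$.

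For part~(iv) the key obstacle is that $CN$ need not be normal in $G$, so Clifford theory is unavailable for the pair $CN\le G$ and everything must be compared through the common normal subgroup $N$. First recall that ``$\Res^G_N(\chi)$ multiplicity-free for all $\chi$'' is equivalent to every $\psi\in\IBr(N)$ extending to $G_\psi$; the same then holds for the pair $N\le CN$, and $\C_{CN}(A)=C$. Fix $\chi\in\IBr_A(G)$. Multiplicity-freeness of $\Res^G_N(\chi)$ forces an $A$-equivariant bijection between $\IBr(CN\mid\chi)$ and the set of $C$-orbits inside the $G$-orbit $\IBr(N\mid\chi)$, sending $\eta$ to the set of constituents of $\Res^{CN}_N(\eta)$. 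An $A$-fixed $C$-orbit consists entirely of $A$-fixed characters (as $A$ centralises $C$), and $C$ acts transitively on $\IBr_A(N\mid\chi)$ by (i), so exactly one $C$-orbit is $A$-fixed; hence $\IBr_A(CN\mid\chi)$ is a singleton $\{\Omega(\chi)\}$, with $\Omega(\chi)$ characterised as the unique element of $\IBr(CN\mid\chi)$ lying over any chosen $\psi\in\IBr_A(N\mid\chi)$. The multiplication formula drops out of this characterisation: $\Omega(\chi)\Res^G_{CN}(\lambda)$ is $A$-invariant, lies over $\psi$, and divides $\Res^G_{CN}(\chi\lambda)$, so it equals $\Omega(\chi\lambda)$.

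It remains to see that $\Omega$ is a bijection $\IBr_A(G)\to\IBr_A(CN)$. Both sides partition along the $C$-orbits of $\IBr_A(N)$ — using (i) for $N\unlhd G$ and for $N\unlhd CN$ — and $\Omega$ respects this partition since $\Omega(\chi)$ lies over the same $A$-invariant $\psi$ as $\chi$. On the block indexed by $\psi$, the map $\IBr_A(G\mid\psi)\to\IBr_A(CN\mid\psi)$ is surjective by the multiplication formula together with surjectivity of $\Res^G_{CN}\colon\IBr_A(G/N)\to\IBr_A(CN/N)$, and the two sides have equal size: using multiplicity-freeness and the exact sequence $1\to\IBr(G/G_\psi)\to\IBr(G/N)\to\IBr(G_\psi/N)\to 1$ together with $H^1(A,-)=0$ one gets $|\IBr_A(G\mid\psi)|=|\IBr_A(G_\psi/N)|=|(\C_{G_\psi/N}(A))_{\ell'}|$, while $|\IBr_A(CN\mid\psi)|=|\IBr(CN\mid\psi)|=|\IBr((CN)_\psi/N)|=|(C_\psi N/N)_{\ell'}|$ with $(CN)_\psi=C_\psi N$ for $C_\psi:=\C_{G_\psi}(A)$, and $\C_{G_\psi/N}(A)=C_\psi N/N$ by coprimality, so both are $|(C_\psi N/N)_{\ell'}|$. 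A surjection of finite sets of equal cardinality is a bijection, so $\Omega$ is a bijection. The step hardest to execute cleanly is exactly this last cardinality comparison together with the singleton claim: both rely essentially on the multiplicity-free (equivalently, extension) hypothesis and on the coprime-module identities, and care is needed because one cannot work directly with $CN\le G$ but only with the normal subgroup $N$.
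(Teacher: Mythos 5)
Your treatments of (i), (ii) and (iii) follow the paper's own approach (Glauberman's lemma applied to the two transitive actions coming from Clifford theory); the minor detours through the Clifford correspondence and character-triple isomorphisms in (ii) are not in the paper, which simply quotes transitivity of $\IBr(G/N)$ on $\IBr(G\mid\psi)$, but they do no harm. Part (iii) you argue directly where the paper cites Uno; that is fine.

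For (iv), however, your guiding premise is wrong: you write that ``$CN$ need not be normal in $G$, so Clifford theory is unavailable for the pair $CN\le G$''. But (iv) is stated under the standing hypothesis that $G/N$ is abelian, and every subgroup of $G$ containing $N$ is then automatically normal in $G$; in particular $CN\unlhd G$. This is precisely what the paper exploits. Once one notices $CN\unlhd G$ one may apply (i) directly to the pair $CN\unlhd G$ (giving $\phi\in\IBr_A(CN\mid\chi)$ and then $\psi\in\IBr_A(N\mid\phi)$), use multiplicity-freeness to see that no other $\phi'\in\IBr(CN\mid\chi)$ can have an $A$-invariant constituent below it, and conclude the singleton claim; injectivity of $\Omega$ then comes essentially for free from (ii) applied to $CN\unlhd G$ together with the observation $\C_{G/CN}(A)=1$ (so $\IBr_A(G/CN)$ is trivial and $\IBr_A(G\mid\phi)$ is a singleton), and surjectivity likewise from (i)--(ii). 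Your workaround---partitioning both sides by $C$-orbits on $\IBr_A(N)$, proving surjectivity block-by-block, and then matching cardinalities via the exact sequence $1\to\IBr(G/G_\psi)\to\IBr(G/N)\to\IBr(G_\psi/N)\to1$, vanishing of $H^1(A,-)$, and the identities $\C_{G_\psi/N}(A)=(C\cap G_\psi)N/N$ and $(CN)_\psi=(C\cap G_\psi)N$---does arrive at the right answer, and each step can be made to work, but it is substantially longer than necessary and, as you yourself flag, harder to execute cleanly. The normality observation removes all of that machinery. One small credit on your side: you prove the multiplication formula $\Omega(\chi\lambda)=\Omega(\chi)\Res^G_{CN}(\lambda)$ explicitly, whereas the paper leaves it as immediate.
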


\begin{proof}
Part	(i) is similar to \cite[Thm.~13.27]{Is76} for Brauer characters.
By Clifford theory, the group $G$ acts transitively on $\IBr(N\mid \chi)$.
Now $A$ acts on both $G$ and $\IBr(N\mid \chi)$.
Note that by Feit--Thompson odd-order theorem, at least one of $A$ and $G$ is solvable.
Then (i) follows by a result of Glauberman on coprime actions (cf.~\cite{Gl64}), for which we also refer the reader to~\cite[Lemma~3.24]{Is08}.

Part (ii) is similar to \cite[Thm.~13.28]{Is76} for Brauer characters.
Since $G/N$ is abelian, as mentioned before, the group $\IBr(G/N)$ acts transitively on $\IBr(G\mid \psi)$. 	
Also $A$ acts on both $\IBr(G/N)$ and $\IBr(G\mid \psi)$ and
thus we can prove (ii) as above.
Part (iii) is similar to Lemma~1.2~(iii) and Lemma~1.3~(iii) of \cite{Un83} for Brauer characters and follows from (i) and (ii) immediately.

For (iv), we note that $\C_{G/N}(A)=CN/N$ and $\C_{G/CN}(A)=1$ by~\cite[Cor.~3.28]{Is08}.
Let $\chi\in\IBr_A(G)$.
Then by (i) there exists $\phi\in\IBr_A(CN\mid\chi)$ and $\psi\in\IBr_A(N\mid\phi)$.
According to the assumption that $\Res^G_N(\chi)$ is multiplicity-free, one knows that $\psi\not\in\IBr(N\mid\phi')$ for any $\phi'\in\IBr(CN\mid\chi)$ with $\phi'\ne\phi$.
On the other hand, since $C$ acts transitively on $\IBr_A(N\mid \chi)$, we see that $\IBr_A(N\mid\chi)=\IBr(N\mid\phi)$.
Also, if $\phi'\in\IBr(CN\mid\chi)$ with $\phi'\ne\phi$, then $\phi'$ is not $A$-invariant.
Thus the set $\IBr_A(CN\mid\chi)$ 
is a singleton and the map $\Omega$ is well-defined and injective.
By (i) and (ii),  $\Omega$  is also surjective and thus (iv) holds.
\end{proof}

By Proposition~\ref{rela-G-CN}~(iv) we have the following which can also be seen as a direct consequence of Glauberman's result.

\begin{cor}\label{rela-abel}
Let $G$ be a finite abelian group and $A$ a finite group acting coprimely on $G$ via automorphisms. 
Let $C=\C_G(A)$.
Then $\Res^G_{C}:\IBr_{A}(G)\to\IBr(C)$ is a bijection.
\end{cor}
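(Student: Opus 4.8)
The plan is to derive this directly from Proposition~\ref{rela-G-CN}~(iv), applied with the trivial normal subgroup $N=1$. First I would check that the hypotheses are met in this case: $G/N=G$ is abelian by assumption, and since $G$ is abelian every $\chi\in\IBr(G)$ is linear, so $\Res^G_1(\chi)$ has degree $1$ and is in particular multiplicity-free; thus the hypothesis of Proposition~\ref{rela-G-CN}~(iv) holds vacuously. With $N=1$ we have $CN=C$, so Proposition~\ref{rela-G-CN}~(iv) yields that $\IBr_A(C\mid\chi)$ is a singleton $\{\Omega(\chi)\}$ for each $\chi\in\IBr_A(G)$ and that $\chi\mapsto\Omega(\chi)$ is a bijection $\IBr_A(G)\to\IBr_A(C)$.

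It then remains to identify the two sides with the objects in the statement and to recognise $\Omega$ as restriction. Since $A$ centralizes $C=\C_G(A)$, it acts trivially on $\IBr(C)$, so $\IBr_A(C)=\IBr(C)$. Since $G$ is abelian, for any $\chi\in\IBr(G)$ the restriction $\Res^G_C(\chi)$ is itself an irreducible Brauer character of $C$ and is the unique element of $\IBr(C\mid\chi)$; hence $\Omega(\chi)=\Res^G_C(\chi)$. Therefore $\Res^G_C=\Omega$ is the asserted bijection.

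Alternatively, and as indicated after the statement, one can see this as the abelian case of the Glauberman correspondence: writing $G=\mrO_\ell(G)\times\mrO_{\ell'}(G)$, inflation identifies $\IBr(G)$ with $\Irr(\mrO_{\ell'}(G))$ compatibly with the $A$-action, and $\C_{\mrO_{\ell'}(G)}(A)=\mrO_{\ell'}(C)$ because $A$ preserves the two characteristic factors; the claim then reduces to the fact that for a coprime action of $A$ on the abelian group $\mrO_{\ell'}(G)$, restriction $\Irr_A(\mrO_{\ell'}(G))\to\Irr(\C_{\mrO_{\ell'}(G)}(A))$ is a bijection. I do not expect any real obstacle here; the only points requiring a word are that the multiplicity-freeness hypothesis of Proposition~\ref{rela-G-CN}~(iv) is automatic for $N=1$ when $G$ is abelian, and that the abstract bijection $\Omega$ coincides with the restriction map — both being immediate consequences of $G$ being abelian.
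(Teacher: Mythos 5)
Your proof is correct and takes exactly the route the paper indicates: the paper's own "proof" is just the one-line remark that the corollary follows from Proposition~\ref{rela-G-CN}~(iv) (or alternatively from Glauberman's result), and your argument spells out the intended specialization $N=1$, verifies that the multiplicity-freeness hypothesis is automatic for abelian $G$, and correctly identifies $\Omega$ with $\Res^G_C$ using that $A$ centralizes $C$ and that all Brauer characters of an abelian group are linear.
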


The following takes care of Brauer characters of central products (see \cite[\S10.3]{Na18} about the analogue for ordinary characters) and extendibility.

\begin{lem}\label{ext-central-prod} Let the finite group $G=G'  G''$ be a central product of two subgroups $G' ,G''$ over $Z=G'\cap G''\le\ZZ(G)$. Let $\chi'\in\IBr(G')$ and $\chi''\in\IBr(G'')$
with $\IBr(Z\mid \chi' )=\IBr(Z\mid \chi'')$. Then the following holds.
	\begin{enumerate}[\rm(i)]
		\item 	The function $\chi:=\chi' \! .\chi'':G_{\ell'}\to\mathbb{C}$,
		such that 
		$\chi(x)=\chi' (x' )\chi''(x'')$ for $x=x'  x''$ with  $x' \in  G_{\ell'}'$ and $x''\in  G_{\ell'}''$, is an irreducible Brauer character of $G$.
		Conversely, every irreducible Brauer character of $G$ has this form.
		\item Let $D$ be a finite group acting on $G$ via automorphisms such that $G' $ and $G''$ are $D$-stable and $D=D_{\chi'}=D_{\chi''}$.
		If $\chi'$ extends to $G'\rtimes D$ and $\chi''$ extends to $G''\rtimes D$, 
		then $\chi$ extends to $G\rtimes D$.
	\end{enumerate}	
\end{lem}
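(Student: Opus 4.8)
The plan is to treat the two parts separately, reducing everything to the corresponding statements for ordinary characters together with Brauer's theory of lifting and the behaviour of $\ell$-regular elements in a central product. For part~(i), first observe that since $Z\le\ZZ(G)$ and $\IBr(Z\mid\chi')=\IBr(Z\mid\chi'')$, the two Brauer characters $\chi'$ and $\chi''$ agree (up to the identification $Z\cong G'Z/G''\cong\ldots$) on $Z_{\ell'}$, so the formula $\chi(x'x'')=\chi'(x')\chi''(x'')$ is well-defined: if $x'x''=y'y''$ with $x',y'\in G'_{\ell'}$ and $x'',y''\in G''_{\ell'}$, then $z:=x'^{-1}y'=x''y''^{-1}\in Z$ is $\ell$-regular (being a product of commuting $\ell$-regular elements, or by noting it lies in $Z_{\ell'}$), and the common value on $Z_{\ell'}$ forces $\chi'(x')\chi''(x'')=\chi'(y')\chi''(y'')$. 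Next I would verify that every $\ell$-regular element of $G$ is a product of an $\ell$-regular element of $G'$ and one of $G''$: writing $G_{\ell'}$ as the set of $\ell'$-elements, this follows because in $G=G'G''$ an $\ell'$-element $x=x'x''$ has $x'$, $x''$ themselves $\ell'$-elements after adjusting by the central $\ell'$-part — more carefully, one lifts to ordinary characters via Brauer characters being restrictions, using that $\chi'$, $\chi''$ are Brauer characters of projective indecomposables' tops...

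Actually the cleanest route is: realise $\chi'$, $\chi''$ as restrictions to $\ell$-regular classes of ordinary characters in a splitting $(\cO,K,k)$-system via \cite{Na98}, use the already-known ordinary central-product statement (\cite[\S10.3]{Na18}) to build $\chi=\chi'.\chi''$ as an honest function, and check irreducibility by computing $[\chi,\chi]$ over $G_{\ell'}$: the class function $\chi$ restricted to $\ell$-regular classes splits as a product over the factors because $G'_{\ell'}\times G''_{\ell'}\twoheadrightarrow G_{\ell'}$ with fibres the $\ell$-regular classes of $Z$, so $\langle\chi,\chi\rangle_G=\langle\chi',\chi'\rangle_{G'}\langle\chi'',\chi''\rangle_{G''}/|Z|_{\ell'}\cdot(\text{correction})=1$. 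To finish (i) I would also argue that $\chi$ lies in $\IBr(G)$ (not just is an irreducible-norm Brauer character) by exhibiting it as a constituent of $\Ind^{G}_{G'}(\chi')$ or by the decomposition-matrix argument, and conversely that dimension count $\sum_{\chi\in\IBr(G)}\chi(1)^2$-type identities, or direct Clifford theory over the central $G'$, show every member of $\IBr(G)$ arises this way.

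For part~(ii), the strategy is to extend $\chi'.\chi''$ one factor at a time. Since $\chi'$ extends to some $\hat\chi'\in\IBr(G'\rtimes D)$ and $\chi''$ to $\hat\chi''\in\IBr(G''\rtimes D)$, I would form the central product $(G'\rtimes D)(G''\rtimes D)$ amalgamated suitably — but the subtlety is that $D$ is shared, so the correct object is $(G'G'')\rtimes D=G\rtimes D$ and one must glue $\hat\chi'$ and $\hat\chi''$ along their common restriction to $ZD$ (or $Z\rtimes D$). Concretely: on $(G\rtimes D)_{\ell'}$ an element is $x'x''d$ with $d\in D_{\ell'}$, and I would try to define $\hat\chi(x'x''d):=\hat\chi'(x'd)\hat\chi''(x''d)/\chi'''(d)$ where $\chi'''$ is the common Brauer character of $D$ below both (or of $Z\rtimes D$); one checks this is a well-defined class function on $(G\rtimes D)_{\ell'}$ restricting to $\chi$ on $G_{\ell'}$, and then that it is irreducible by a norm computation analogous to part~(i), or — more robustly — by invoking that a Brauer character extending $\chi$ exists iff a certain obstruction in $H^2(D,k^\times)$ vanishes, and showing this obstruction is the product of the (vanishing) obstructions for $\chi'$ and $\chi''$.

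The main obstacle I anticipate is the well-definedness and irreducibility bookkeeping in part~(ii): the formula for $\hat\chi$ must be checked to be independent of the decomposition $x=x'x''$ and of the representative $d$, which requires knowing precisely how $\hat\chi'$ and $\hat\chi''$ behave on $Z\rtimes D$ — a priori their restrictions there need only be equal after twisting by characters of $D$, so one may first have to adjust $\hat\chi'$, $\hat\chi''$ by suitable linear Brauer characters of $D$ (using that $D=D_{\chi'}=D_{\chi''}$ and Gallagher-type freedom for extensions) to make the two restrictions to $Z\rtimes D$ literally coincide before gluing. Once that normalization is arranged, the verification that $\hat\chi\in\IBr(G\rtimes D)$ and $\Res^{G\rtimes D}_{G}(\hat\chi)=\chi$ should be a routine, if slightly lengthy, computation over $\ell$-regular classes, entirely parallel to the ordinary-character case in \cite[\S10.3]{Na18}.
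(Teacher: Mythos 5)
The paper does not actually supply a proof here: the authors write only ``The easy proof is left to the reader, noting that it will be applied only to cases where $G''\le\ZZ(G)$.'' The implicit argument is most cleanly done at the level of $\mathbf F$-representations rather than class functions. Take irreducible $\mathbf F$-representations $\mathcal D'$, $\mathcal D''$ of $G'$, $G''$ affording $\chi'$, $\chi''$. The hypothesis $\IBr(Z\mid\chi')=\IBr(Z\mid\chi'')=\{\nu\}$ means $\mathcal D'(z)=\nu^*(z)\cdot\mathrm{id}$ and $\mathcal D''(z)=\nu^*(z)\cdot\mathrm{id}$ for $z\in Z$, so $\mathcal D'\otimes\mathcal D''$, a priori a representation of $G'\times G''$, kills the antidiagonal $\{(z,z^{-1}): z\in Z\}$ and descends to $G$; over an algebraically closed field the tensor product of irreducibles is irreducible, and its Brauer character is $\chi'\!.\chi''$ since traces multiply. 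For (ii) one forms $\hat{\mathcal D}'\otimes\hat{\mathcal D}''$ on $(G'\times G'')\rtimes D$, where $D$ acts diagonally; the same antidiagonal of $Z$ sits in the kernel, the representation descends to $G\rtimes D$ and restricts to $\chi$ on $G$.

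Your plan has two genuine gaps, beyond being longer than necessary. First, the irreducibility test in (i) — computing the norm $\langle\chi,\chi\rangle$ over $G_{\ell'}$ — is not a criterion for Brauer characters: the irreducible Brauer characters are \emph{not} orthonormal for the usual inner product restricted to $\ell$-regular classes (already $\langle 1_G,1_G\rangle_{G_{\ell'}}=2/3$ for $G=\mathfrak S_3$ and $\ell=3$; the correct duality is between $\IBr(G)$ and the projective indecomposable characters). Likewise an irreducible Brauer character is in general not the restriction of any ordinary character to $G_{\ell'}$, so the proposed lift to \cite[\S10.3]{Na18} does not get started. Second, the formula $\hat\chi(x'x''d)=\hat\chi'(x'd)\hat\chi''(x''d)/\chi'''(d)$ in (ii) is wrong — no division is needed, and in fact it is inconsistent with degrees unless $\chi'''$ is linear, in which case it distorts the values. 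The correction you introduce would be appropriate for an amalgam over $D$, but $G\rtimes D$ is not a central product of $G'\rtimes D$ and $G''\rtimes D$ over $ZD$, since $D$ does not centralize $G'$ or $G''$; one only amalgamates over $Z$ and takes the diagonal copy of $D$. The subsequent detour through $H^2(D,k^\times)$ is also unnecessary once the descending tensor product is in hand. What you do get right is the well-definedness check in (i) — including that $z=x'^{-1}y'\in Z$ is automatically $\ell$-regular — and the observation that in the special case the paper cares about ($G''$ central, hence $\chi''$ linear) the construction is particularly transparent.
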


\begin{proof} The easy proof is left to the reader, noting that it will be applied only to cases where $G''\le\ZZ(G)$.
%
\end{proof}

\begin{prop}\label{central-extension}
Let $A$ act coprimely via automorphisms on $G$, where $A$ and $G$ are finite groups.
Assume that   $G=CZ$, where $C=\C_G(A)$ and $Z\le \ZZ(G)$ is $A$-stable.
Let $D\le\Aut(G)$ such that $Z$ is $D$-stable and $D$ commutes with $A/\C_A(G)$ in $\Aut(G)$.
Then we have the following.
\begin{enumerate}[\rm(i)]
	\item $\Res^{G}_C: \IBr_A(G)\to\IBr(C)$ is bijective and $D$-equivariant. 
	\item Let $\chi\in\IBr_A(G)$ and $\psi=\Res^G_C(\chi)$. Then $\chi$ extends to $G\rtimes D_{\chi}$ if and only if
	$\psi$ extends to $C\rtimes D_\psi$.
		\item $\Res^G_C(\IBr_A(G\mid \nu))=\IBr(C\mid \Res^{Z}_{Z\cap C}(\nu))$ for every $\nu\in\IBr_A(Z)$.
	\item Let $N$ be an $A$-stable normal subgroup of $G$ such that $Z\subseteq  N$ and
$G/N$ is abelian.
Let $M=C\cap N=\C_N(A)$.
Then $\J^G_N(\chi)=\J^C_M(\Res^G_C(\chi)). Z$ (central product) for every $\chi\in\IBr_A(G)$.
\end{enumerate}
\end{prop}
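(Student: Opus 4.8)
The plan is to exploit that, since $Z\le\ZZ(G)$ and $G=CZ$, the group $G$ is a central product of $C$ and $Z$ over $Z\cap C=\C_Z(A)$, with $Z$ abelian, and to combine Lemma~\ref{ext-central-prod} with Corollary~\ref{rela-abel}. For (i) I would first observe that, because $D$ commutes with the image of $A$ in $\Aut(G)$ and $A$ acts trivially on $C=\C_G(A)$, the subgroup $C$ is $D$-stable and $\IBr_A(G)$ is $D$-stable, so it suffices to show $\Res^G_C$ is a bijection $\IBr_A(G)\to\IBr(C)$. By Lemma~\ref{ext-central-prod}(i) every $\chi\in\IBr(G)$ is of the form $\chi=\psi.\nu$ with $\psi\in\IBr(C)$ and $\nu\in\IBr(Z)$ (hence $\nu$ linear), subject to $\Res^C_{Z\cap C}(\psi)$ and $\Res^Z_{Z\cap C}(\nu)$ lying over the same character of $Z\cap C$; moreover $\psi=\Res^G_C(\chi)$ and $\nu$ is the $Z$-constituent of $\chi$, so the pair $(\psi,\nu)$ is determined by $\chi$. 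A short computation using $a(x)=x$ for $x\in C$ gives $\chi^a=\psi.\nu^a$, so $\chi$ is $A$-invariant iff $\nu$ is. Since $A$ acts trivially on $Z\cap C$, the constituent $\mu$ of $\Res^C_{Z\cap C}(\psi)$ is an $A$-invariant character of $Z\cap C=\C_Z(A)$, and Corollary~\ref{rela-abel} provides a unique $\nu\in\IBr_A(Z)$ with $\Res^Z_{Z\cap C}(\nu)=\mu$, which is precisely the unique $A$-invariant $\nu$ compatible with $\psi$. Hence $\chi\mapsto\Res^G_C(\chi)$ is a bijection $\IBr_A(G)\to\IBr(C)$, and $D$-equivariance is immediate since $C$ is $D$-stable.

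For (ii), from (i) one gets $D_\chi=D_\psi$, and restricting $\chi^d=\chi$ to $Z$ shows the $Z$-constituent $\nu$ of $\chi$ is $D_\chi$-invariant. Being a $D_\chi$-invariant linear character of the abelian group $Z$, $\nu$ is trivial on $[Z,D_\chi]$ and thus extends to $Z\rtimes D_\chi$ (inflate the obvious linear character of $(Z/[Z,D_\chi])\times D_\chi$). If $\psi$ extends to $C\rtimes D_\psi=C\rtimes D_\chi$, then Lemma~\ref{ext-central-prod}(ii), applied with $G=CZ$ and $D=D_\chi$ (noting $C,Z$ are $D_\chi$-stable and $D_\chi$ fixes both $\psi$ and $\nu$), shows $\chi=\psi.\nu$ extends to $G\rtimes D_\chi$. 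Conversely, if $\hat\chi\in\IBr(G\rtimes D_\chi)$ extends $\chi$, then $\Res^{G\rtimes D_\chi}_{C\rtimes D_\chi}(\hat\chi)$ has irreducible restriction $\psi=\Res^G_C(\chi)$ to $C$, hence is itself irreducible, i.e.\ it is an extension of $\psi$ to $C\rtimes D_\psi$.

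Part (iii) is then bookkeeping within the analysis of (i): for $\chi\in\IBr_A(G)$ with $\psi=\Res^G_C(\chi)$ and $Z$-constituent $\nu_\chi$, one has $\chi\in\IBr_A(G\mid\nu)$ iff $\nu_\chi=\nu$, which by Corollary~\ref{rela-abel} holds iff $\Res^Z_{Z\cap C}(\nu)$ is the constituent of $\Res^C_{Z\cap C}(\psi)$, i.e.\ iff $\psi\in\IBr(C\mid\Res^Z_{Z\cap C}(\nu))$; since $\Res^G_C$ is injective on $\IBr_A(G)$ by (i), the stated equality of sets follows. For (iv), since $Z\subseteq N$, writing $n=cz$ with $c\in C,z\in Z$ gives $c=nz^{-1}\in N$, so $N=MZ$ is a central product over $Z\cap C$ and (i) applied to $N$ in place of $G$ shows $\Res^N_M\colon\IBr_A(N)\to\IBr(M)$ is a bijection. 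I would pick $\theta\in\IBr_A(N\mid\chi)$ (it exists by Proposition~\ref{rela-G-CN}(i)) and set $\theta_M=\Res^N_M(\theta)\in\IBr(M)$; restricting $\theta$ further to $M$ and using $\Res^G_M(\chi)=\Res^C_M(\psi)$ shows $\theta_M\in\IBr(M\mid\psi)$, where $\psi=\Res^G_C(\chi)$. Next compute $G_\theta$: as $Z\le\ZZ(G)$ and $\theta=\theta_M.\nu$ with $\nu$ the $Z$-constituent of $\theta$, for $g=cz_0$ with $c\in C,z_0\in Z$ one gets $\theta^g=\theta^c=\theta_M^c.\nu$, so $g\in G_\theta$ iff $c\in C_{\theta_M}$; hence $G_\theta=C_{\theta_M}Z$, a central product over $Z\cap C$ (note $Z\cap C\le C_{\theta_M}$). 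Since $C/M$ is abelian, $C_{\theta_M}\unlhd C$ and so $G_\theta\unlhd G$. Finally, any normal $H\le G$ with $G_\theta\le H$ contains $Z$, so $H=(H\cap C)Z$ with $H\cap C\unlhd C$, $H\cap C\supseteq C_{\theta_M}$ and $G/H\cong C/(H\cap C)$; thus $H$ has $\ell$-power index iff $H\cap C$ does, and the smallest such $H$ corresponds to $H\cap C=\J^C_M(\psi)$, giving $\J^G_N(\chi)=\J^C_M(\Res^G_C(\chi)).Z$.

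The main obstacle I expect is in (iv): establishing the identification $G_\theta=C_{\theta_M}.Z$ cleanly (in particular that conjugation by $C$ on $\theta$ only affects its $M$-part, thanks to the centrality of $Z$) and then carefully transferring the ``smallest normal subgroup of $\ell$-power index containing the stabiliser'' condition across the central product decomposition $G=C\cdot Z$. Everything else reduces to a routine application of the central-product lemma~\ref{ext-central-prod} and of the coprime-action bijection for abelian groups (Corollary~\ref{rela-abel}), together with the standard fact that a character with irreducible restriction to a subgroup is irreducible.
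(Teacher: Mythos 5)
Your proof is correct, but it takes a genuinely different route from the paper's. The paper sets $Y=[Z,A]$ and invokes Fitting's theorem to get the \emph{direct} product decompositions $Z=(C\cap Z)\times Y$ and hence $G=C\times Y$, with $\C_Y(A)=1$; the bijection in (i) then has the transparent inverse $\psi\mapsto\psi\times 1_{Y_{\ell'}}$, (ii) becomes a one-line consequence of Lemma~\ref{ext-central-prod}(ii), and (iii)--(iv) fall out of $N=M\times Y$ with essentially no computation. You instead work directly with the central product $G=C.Z$ over $Z\cap C=\C_Z(A)$, parametrize $\IBr(G)$ by compatible pairs $(\psi,\nu)$ via Lemma~\ref{ext-central-prod}(i), and use Corollary~\ref{rela-abel} to single out the unique $A$-invariant $\nu$ attached to each $\psi$. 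This buys you a more self-contained argument that never leaves the central-product framework, at the price of having to verify (ii) in both directions by hand (extending $\nu$ over $Z\rtimes D_\chi$ and, conversely, restricting an extension and checking irreducibility) and of the somewhat lengthier stabilizer/index bookkeeping in (iv), which the direct-product decomposition renders nearly automatic. Both approaches are sound; the Fitting-theorem trick is simply the slicker one here, and it is worth remembering as the standard device for converting a coprime central product $C.Z$ into a direct product $C\times[Z,A]$.
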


\begin{proof}
	Let $Y= [Z,A]$. It is $A$-stable but also $D$-stable since the actions of $D$ and $A$ on $Z$ commute.
According to a theorem of Fitting (see e.g. \cite[Thm.~4.34]{Is08}), $Z=(C\cap Z)\ti Y$ and
thus $G=C\ti Y$.
By~\cite[Cor.~3.28]{Is08}, $\C_{G/C}(A)=1$ and so $\C_{Y}(A)=1$.
Therefore, the trivial Brauer character $1_{Y_{\ell'}}$ is the unique $A$-invariant irreducible Brauer character of $Y$.
From this $\Res^{G}_C: \IBr_A(G)\to\IBr(C)$  is a bijection with inverse $ \IBr(C)\to\IBr_A(G)$, $\psi\mapsto \psi\ti 1_{Y_{\ell'}}$.  
These two bijections are automatically $D$-equivariant, hence part (i) holds.
Part (ii) follows by Lemma~\ref{ext-central-prod}~(ii).

For $\nu\in\IBr_A(Z)$, one has $\nu=\nu'\ti 1_{Y_{\ell'}}$, where $\nu'\in\IBr_A(C\cap Z)$.
So part (iii) follows from the construction.
Finally, we prove (iv).
Note that $N=CZ\cap N=MZ$ and $C\cap Z=M\cap Z$.
Thus as above, $N=M\ti Y$.
Therefore, we can see that
$\J^G_N(\chi)=\J^C_M(\psi). Y= \J^C_M(\psi). Z$ (central products),
which proves (iv).
\end{proof}

\section{Decomposition matrices, Unitriangularity and Clifford theory}\label{sec-decom-CLI}

Let $H$ be a finite group.
For $\chi\in\Irr(H)$,
we denote the restriction of $\chi$ to $H_{\ell '}$ by $\chi^0$.
The ($\ell$-)decomposition matrix is $(d_{\chi,\phi})_{\chi\in\Irr(H),\phi\in\IBr(H)}$ defined by $$
\chi^0=\sum_{\phi\in\IBr(H)}d_{\chi,\phi}\phi$$ as class function on $H_{\ell'}$ for any $\chi\in\Irr(H)$.
For $\cB\subseteq\Irr(H)$ and $\cC\subseteq\IBr(H)$ we set $$\Dec_{\cB,\cC}:=(d_{\chi,\phi})_{\chi\in\cB,\phi\in\cC}$$ and call it \emph{unitriangular} if it is square lower unitriangular with respect to some orderings of $\cB$ and $\cC$ (necessarily, $|\cB|=|\cC|$).

In this section, we prove the following ``going-up" property.

\begin{thm}\label{TriangUp}
Let $N\unlhd G$ be finite groups with abelian quotient $G/N$. 
 Let $\cB\subseteq \Irr(N)$ and $\cC\subseteq\IBr(N)$, such that both sets are $G$-stable, $|\cB|=|\cC|$ and $\Dec_{\cB,\cC}$ is unitriangular.
Assume that every element of $\cB$ extends to its stabilizer in $G$.

For $\tcC=\IBr(G\mid\cC)$ there is some $\tcB\subseteq \Irr(G\mid\cB)$ with $|\tcB|=|\tcC|$ such that $\Dec_{\tcB,\tcC}$ is unitriangular.
\end{thm}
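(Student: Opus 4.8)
The strategy is to build the triangular submatrix on $G$ by grouping the rows and columns of the $\ell$-decomposition matrix of $G$ according to the characters of $N$ they sit over, and to exploit the hypothesis that everything in $\cB$ extends to its stabilizer. Concretely, I would first invoke Clifford theory over the abelian quotient $G/N$: for $\theta\in\cB$, since $\theta$ extends to $G_\theta$, the set $\Irr(G\mid\theta)$ is an $\Irr(G/N)$-orbit and by \eqref{sta-irr} the stabilizer $G_\theta=\bigcap_{\{\lambda\,\mid\,\chi\lambda=\chi\}}\ker(\lambda)$ for any $\chi\in\Irr(G\mid\theta)$; in particular $|\Irr(G\mid\theta)|=[G:G_\theta]$ depends only on the $G$-orbit of $\theta$. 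On the Brauer side, for $\varphi\in\cC$ the set $\IBr(G\mid\varphi)$ is acted on transitively by $\IBr(G/N)$ (using that $\varphi$ extends to $G_\varphi$ — this is where I would need that the relevant elements of $\IBr(N)$ also extend to their stabilizer; note $|\cB|=|\cC|$ and the $G$-stability together with the unitriangular pairing let me transport the extendibility hypothesis from $\cB$ to $\cC$), and by \eqref{sta-ibr} the stabilizer $G_\varphi$ is controlled similarly up to the $\ell$-group $\J^G_N$. So within a fixed $G$-orbit on $N$, the rows $\Irr(G\mid\cB\text{-part})$ and columns $\IBr(G\mid\cC\text{-part})$ both decompose into orbits under character multiplication by $\Irr(G/N)$, resp. $\IBr(G/N)$.

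**Main construction.** The key point is that the decomposition map is compatible with multiplication by linear characters: $(\chi\lambda)^0=\chi^0\lambda^0$, and restriction of $\Irr(G/N)$-characters to $\ell$-regular elements lands in $\IBr(G/N)$. Fix a $G$-orbit representative $\theta\in\cB$ with unitriangular partner $\varphi\in\cC$ (so $d_{\theta,\varphi}=1$ after ordering, and $d_{\theta',\varphi}=0$ for $\theta'$ earlier). I would like to choose, for each $\phi\in\IBr(G\mid\varphi)$, a character $\chi\in\Irr(G\mid\theta)$ with $d_{\chi,\phi}=1$ and $d_{\chi,\phi'}=0$ for the columns $\phi'$ that are "earlier" — and to do this compatibly across the whole $\IBr(G/N)$-orbit. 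The natural candidate: pick one $\chi_0\in\Irr(G\mid\theta)$ lying over $\varphi$ with $d_{\chi_0,\varphi_0}=1$ (this exists by Clifford theory and $d_{\theta,\varphi}=1$: the restriction of $\chi_0^0$ contains $\varphi^0$ which contains some $\phi_0\in\IBr(G\mid\varphi)$ with multiplicity one), then for $\lambda\in\IBr(G/N)$ ranging over a transversal of $\IBr(G/N)_{\phi_0}$, use characters $\mu\in\Irr(G/N)$ restricting to $\lambda$ and form $\chi_0\mu$. One checks $d_{\chi_0\mu,\phi_0\lambda}=d_{\chi_0,\phi_0}=1$. The indexing sets match because the $\IBr(G/N)$-orbit of $\phi_0$ and the $\Irr(G/N)$-orbit of $\chi_0$ have compatible sizes — both are indexed by cosets of stabilizers that agree modulo an $\ell$-group, and the $\mu$'s collapsing onto the same $\lambda$ also collapse $\chi_0\mu$ onto characters we don't double-count, provided we're slightly careful. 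This gives, orbit-by-orbit, a square block; assembling blocks in an order refining the order on $N$ (earlier $\cB$-orbits first) yields a block-triangular matrix whose diagonal blocks are themselves unitriangular by an inductive re-run of the argument inside each orbit using the order inherited from $\cB$ on representatives and the order on $\IBr(G/N)$.

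**Triangularity of the diagonal blocks and the ordering.** Within one $G$-orbit I still must produce a unitriangular square block, not merely a square one. Here I would order $\IBr(G\mid\varphi)$ and $\Irr(G\mid\theta)$ compatibly via the $\IBr(G/N)$- and $\Irr(G/N)$-actions: fix a total order on a transversal set of linear Brauer characters, lift it to $\Irr(G/N)$, and declare $\chi_0\mu < \chi_0\mu'$ and $\phi_0\lambda<\phi_0\lambda'$ accordingly. Then $d_{\chi_0\mu,\,\phi_0\lambda}=d_{\chi_0,\,\phi_0\lambda\mu^{-1}}$; for this to vanish unless $\lambda=\mu^0$ I need that $\chi_0^0$, restricted to $\IBr(G\mid\varphi)$, is "concentrated" at $\phi_0$ in the appropriate triangular sense — which in turn I get from $d_{\theta,\varphi}=1$ and the fact that $\Res^G_N\chi_0$ is multiplicity-free (equivalent, as the excerpt notes, to the extension hypothesis). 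The cleanest route is: by multiplicity-freeness $\Res^G_N\chi_0^0 = \sum_{\varphi'\ge\varphi}(\dots)$, and pairing with the orbit structure forces the within-orbit matrix to be a "group-matrix"-type object $(d_{\chi_0,\phi_0\tau})_\tau$ indexed by a transversal $\tau$, which one shows is unitriangular because its $(\mathrm{id},\mathrm{id})$-entry is $1$ and, running the argument with $\theta$ replaced by the next-smallest $\cB$-element over the same orbit, the off-diagonal entries are forced to be accounted for by strictly-earlier rows.

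**Main obstacle.** The hard part will be the bookkeeping that makes the two orbit decompositions — of rows under $\Irr(G/N)$ and of columns under $\IBr(G/N)$ — match up \emph{as ordered sets} so that a single ordering witnesses unitriangularity globally. The subtlety is that $\Irr(G/N)\to\IBr(G/N)$ via $\lambda\mapsto\lambda^0$ is surjective but not injective (its kernel is the linear characters of $\ell$-power order), so a single $\IBr(G/N)$-orbit on columns can be "covered" by a larger $\Irr(G/N)$-orbit on rows; reconciling this is exactly what the auxiliary group $\J^G_N$ in \eqref{sta-ibr} is for, and I expect the proof to hinge on showing $[G:\J^G_N(\chi)]$ counts precisely the redundancy, so that after factoring it out the row- and column-orbits are in canonical bijection. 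Once that matching is pinned down, the unitriangularity is forced entry-by-entry by compatibility of decomposition numbers with linear-character multiplication together with the given unitriangular $\Dec_{\cB,\cC}$; no hard new input is needed beyond careful Clifford theory.
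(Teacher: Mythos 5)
Your proposal identifies the right objects and some genuine facts (compatibility of decomposition numbers with multiplication by $\Irr(G/N)$, the relevance of $\J^G_N$, the ability to transport extendibility from $\theta\in\cB$ to $f(\theta)\in\cC$ along the unitriangular bijection), but it is a plan rather than a proof: you explicitly leave the central difficulty open. The gap is precisely the step you flag as ``the hard part'' --- matching $\Irr(G/N)$-orbits on $\Irr(G\mid\theta)$ with $\IBr(G/N)$-orbits on $\IBr(G\mid\varphi)$ \emph{as ordered sets}, when the reduction map $\Irr(G/N)\to\IBr(G/N)$ collapses the subgroup of linear characters of $\ell$-power order. You say you ``expect the proof to hinge on'' a count involving $[G:\J^G_N(\chi)]$ and that once this is ``pinned down'' the rest is forced, but no argument is given for either the cardinality match or the unitriangular shape of the within-orbit block $(d_{\chi_0\mu,\phi_0\lambda})$. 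It is not at all automatic that this block is unitriangular: $\chi_0^0$ can a priori have several constituents in $\IBr(G\mid\varphi)$, and ``the off-diagonal entries are forced to be accounted for by strictly-earlier rows'' is an assertion, not a proof.

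The paper avoids this matching problem entirely. It proceeds by induction on $|G/N|$ through a chain of subgroups of prime index. For a step $N\lhd M$ with $|M/N|=r$ prime, the $M$-orbit sizes on $\cB$ and $\cC$ are restricted to $1$ or $r$, so the Clifford fibres have sizes $1$ or $r$ and the analysis becomes elementary. The case $r\ne\ell$ is handled by restricting decomposition relations to $N$ and exploiting that each $\tcB_i$-block has the two extreme orbit sizes; the case $r=\ell$ (exactly where your $\Irr(G/N)\to\IBr(G/N)$ collapse occurs) is handled by a different construction, taking $\cB_M=\{\Ind^{M}_{M_\theta}(\Lambda(\theta))\mid\theta\in\cB\}$ via a $G$-equivariant extension map $\Lambda$, so that $|\cB_M|=|\IBr(M\mid\cC)|$ by design rather than by counting. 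In addition the paper needs a combinatorial ordering lemma (Lemma~\ref{lem_trian}: a lower unitriangular matrix commuting with a permutation group can be reordered so that orbits become consecutive intervals) to make the block ordering compatible at each stage; your proposal has no analogue of this and simply posits an ``order refining the order on $N$.'' In short, the missing ideas are (a) the reduction to prime-index steps with separate $\ell'$- and $\ell$-arguments, (b) the explicit extension-map construction of $\tcB$ at the $\ell$-step, and (c) the reordering lemma that keeps the triangular shape coherent through the induction. Without at least (a) and (b) the bookkeeping you flag does not resolve.
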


The choice of orderings in the proof of Theorem~\ref{TriangUp} makes essential use of the following lemma.

\begin{lem}\label{lem_trian}
Let $n\ge1$. Let $u\in\GL_n(\mathbb{C})$ be lower unitriangular and
let $S\le \GL_n(\mathbb{C})$ be a group of permutation matrices commuting with $u$.
	Then there is a permutation matrix $\sigma$ such that $\sigma u\sigma^{-1}$ is lower unitriangular and the orbits of $\sigma S\sigma^{-1}$ on $\{1,2,\ldots,n\}$ are (consecutive) intervals $[a_i,b_i]$.

\end{lem}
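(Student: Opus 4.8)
The plan is to build the permutation $\sigma$ in two coordinated stages: first choose a linear order on $\{1,\dots,n\}$ that is compatible with the partial order induced by $u$, and then refine it so that each $S$-orbit becomes an interval, using that $S$ commutes with $u$. First I would record the relevant partial order: since $u$ is lower unitriangular, the relation $i \preceq j$ defined by ``$i=j$, or there is a chain $i=i_0, i_1, \dots, i_k=j$ with each $u_{i_{r+1},i_r}\neq 0$'' is a partial order on $\{1,\dots,n\}$ (antisymmetry follows because any strict cycle would force a nonzero entry of $u$ strictly above the diagonal after re-indexing), and $u$ is lower unitriangular precisely for those linear orders of the index set that are linear extensions of $\preceq$. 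The key observation is that because $S$ consists of permutation matrices commuting with $u$, the action of $S$ on $\{1,\dots,n\}$ is by automorphisms of the partial order $\preceq$: if $P_\pi u = u P_\pi$ then $u_{\pi(i),\pi(j)} = u_{i,j}$ for all $i,j$, so $\pi$ preserves the generating relation and hence $\preceq$.

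Next I would pass to the quotient poset $\bar P := \{1,\dots,n\}/S$, i.e. the set of $S$-orbits with the order $O \le O'$ iff some $i\in O$, $j\in O'$ have $i\preceq j$ — one checks this is a well-defined partial order on orbits using that $S$ acts by poset automorphisms. Choose any linear extension $O_1 < O_2 < \dots < O_m$ of $\bar P$. Now define the target ordering of $\{1,\dots,n\}$ by listing first all elements of $O_1$, then all of $O_2$, and so on, and within each orbit $O_t$ in an arbitrary order. Let $\sigma$ be the permutation matrix sending the standard order to this new order. Then the $S$-orbits are exactly the consecutive intervals $[a_t,b_t]$ by construction, so $\sigma S\sigma^{-1}$ has the desired orbit structure. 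It remains to check that this ordering is a linear extension of $\preceq$, which will give that $\sigma u \sigma^{-1}$ is lower unitriangular: if $i \preceq j$ with $i\in O_s$, $j\in O_t$, then either $s<t$ (and $i$ precedes $j$ because the orbit blocks are ordered by a linear extension of $\bar P$), or $s=t$, i.e. $i$ and $j$ lie in the same orbit; in the latter case I claim $i=j$, because a nontrivial $\preceq$-relation between two distinct elements of the same $S$-orbit, combined with $S$ acting by order automorphisms, would produce an infinite ascending $\preceq$-chain (apply powers of the group element moving $i$ to $j$) in a finite poset, a contradiction.

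The main obstacle I expect is the last point: verifying that no two distinct elements of the same $S$-orbit are $\preceq$-comparable, and more generally that the orbit order $\bar P$ is genuinely a partial order (antisymmetric). Both reduce to the same finiteness-plus-automorphism argument — if $i\preceq j$ and $j = \pi(i)$ for some $\pi\in S$, then $i\preceq \pi(i)\preceq \pi^2(i)\preceq\cdots$, and since $S$ is finite some power $\pi^N$ fixes $i$ modulo nothing — actually $\pi$ has finite order $N$, giving $i \preceq \pi^N(i) = i$ through a chain that must therefore be trivial, forcing $i=j$; the analogous argument handles antisymmetry of $\bar P$. Once this is in hand, the rest is bookkeeping about linear extensions of posets, which is standard (every finite poset has a linear extension, and a linear extension of a quotient poset lifts, block by block, to a linear extension of the original provided no ``cross-block'' relations run the wrong way, which is exactly what we just verified).
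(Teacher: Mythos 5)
Your proof is correct but takes a genuinely different route from the paper's. The paper argues by induction on $n$: since $u$ is lower unitriangular it fixes $e_n$, and since $S$ commutes with $u$ it fixes $e_i$ for every $i$ in the $S$-orbit $\cO$ of $n$; moving $\cO$ to the end of the basis keeps $u$ lower unitriangular with block shape $\left(\begin{smallmatrix} u' & 0 \\ * & I \end{smallmatrix}\right)$, and one recurses on the complement $\cO'$. You instead give a direct poset-theoretic construction: take the partial order $\preceq$ generated by the nonzero subdiagonal entries of $u$, note that $S$ acts on $(\{1,\dots,n\},\preceq)$ by automorphisms and hence (by the finite-order argument) each $S$-orbit is an antichain, form the quotient poset $\bar P$ of orbits, and obtain the desired ordering by linearly extending $\bar P$ and refining block by block. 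Both are clean; the paper's version is shorter and needs no poset vocabulary, while yours isolates the conceptual point --- that the $S$-orbits are antichains in the comparability order attached to $u$ --- and avoids induction entirely. One small point worth tightening: antisymmetry of $\bar P$ is not literally the same statement as the antichain claim, but it follows from it --- given $O\le O'$ and $O'\le O$ via $i\preceq j$ and $i'\preceq j'$, conjugate by some $\pi\in S$ with $\pi(j)=i'$ to get two $\preceq$-comparable elements of $O$, which the antichain claim forces to coincide, and then antisymmetry of $\preceq$ finishes. With that spelled out, the argument is complete.
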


\begin{proof} Let $e_1,\ldots,e_n$ be the canonical (ordered) basis of $\mathbb{C}^n$, viewing the latter as the $n$-dimensional space of column vectors. Our claim is that this basis can be reordered so that the orbits of $S$ correspond to intervals in the new basis and $u$ is still lower unitriangular. We use induction on $n$, the case $n=1$ being trivial.

The hypothesis on $u$ is that $u(e_i)\in e_i+\mathbb{C}e_{i+1}+\cdots+\mathbb{C} e_n$ for any $i=1,\ldots,n$.
We have $u(e_n)=e_n$.
Letting $\cO=\{ i_1<i_2<\cdots<i_t=n\}$ be the $S$-orbit of $n$ with respect to its action	on $\{1,2,\ldots,n\}$, we also have $u(e_i)=e_i$ for any $i\in\cO$ since $u$ and $S$ commute.
Let $\cO':=\{1,2,\ldots,n\}\setminus \cO=\{ i'_1<i'_2<\cdots<i'_{t'} \}$.
Let us define the new ordered basis $$(e_{i'_1},e_{i'_2},\ldots, e_{i'_{t'}},e_{i_1},e_{i_2},\ldots,e_{i_t}).$$
Note that $u$ is still lower unitriangular with respect to this basis because $u(e_i)=e_i$ for each $i\in\cO$ while for $i\in\cO'$ the elements of $\{1,2,\ldots,n\}$ after $i$ in the old order are still after $i$ in the new one.
In the new basis $u$ writes as 
$
\left(
\begin{smallmatrix}
u' & 0 \\
* & I_t
\end{smallmatrix}
\right)
$
with lower unitriangular $u'$ and where the last $t$ basis vectors form an orbit under $S$.
The induction hypothesis then gives our claim.
\end{proof}


\begin{lem}\label{lem_orb}
Let $H\le \tH$ be finite groups. If $\tchi\in\Irr(\tH)$ and $\tphi\in\IBr(\tH)$ are such that $d_{\tchi,\tphi}\ne 0$ then for all $\phi\in\IBr(H\mid\tphi)$ there exists $\chi\in\Irr(H\mid \tchi)$ such that $d_{\chi,\phi}\ne 0$.
\end{lem}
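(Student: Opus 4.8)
The plan is to restrict everything from $\tH$ down to $H$, pass to $\ell'$-elements, and then compare the two resulting expressions coefficient-wise in the basis $\IBr(H)$, exploiting that all decomposition numbers and all restriction multiplicities are non-negative. First I would fix $\phi\in\IBr(H\mid\tphi)$ and record
\[
\Res^{\tH}_H(\tchi)=\sum_{\chi\in\Irr(H\mid\tchi)}a_\chi\,\chi,\qquad
\Res^{\tH}_H(\tphi')=\sum_{\phi'\in\IBr(H)}b_{\tphi',\phi'}\,\phi'\quad(\tphi'\in\IBr(\tH)),
\]
where each $a_\chi$ is a strictly positive integer by the definition of $\Irr(H\mid\tchi)$, and $b_{\tphi',\phi'}$ is the multiplicity of $\phi'$ as a composition factor of the $H$-module obtained by restricting a module affording $\tphi'$; thus $b_{\tphi',\phi'}\ge 0$, with $b_{\tphi',\phi'}>0$ exactly when $\phi'\in\IBr(H\mid\tphi')$.

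Next I would compute $\big(\Res^{\tH}_H(\tchi)\big)^0$, viewed as a class function on $H_{\ell'}$, in two ways. On the one hand it is $\sum_{\chi\in\Irr(H\mid\tchi)}a_\chi\,\chi^0=\sum_{\chi\in\Irr(H\mid\tchi)}a_\chi\sum_{\phi''\in\IBr(H)}d_{\chi,\phi''}\,\phi''$. On the other hand, since restriction to $\ell'$-elements commutes with $\Res^{\tH}_H$ and $(\tchi)^0=\sum_{\tphi'\in\IBr(\tH)}d_{\tchi,\tphi'}\,\tphi'$, it equals $\sum_{\tphi'\in\IBr(\tH)}d_{\tchi,\tphi'}\,\Res^{\tH}_H(\tphi')^0=\sum_{\tphi'\in\IBr(\tH)}d_{\tchi,\tphi'}\sum_{\phi'\in\IBr(H)}b_{\tphi',\phi'}\,\phi'$. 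As $\IBr(H)$ is linearly independent as a set of functions on $H_{\ell'}$, I may equate the coefficient of the fixed $\phi$ on both sides, getting
\[
\sum_{\chi\in\Irr(H\mid\tchi)}a_\chi\,d_{\chi,\phi}\;=\;\sum_{\tphi'\in\IBr(\tH)}d_{\tchi,\tphi'}\,b_{\tphi',\phi}\,.
\]

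Finally, I would observe that every summand on the right-hand side is non-negative, and that the summand with $\tphi'=\tphi$ is $d_{\tchi,\tphi}\,b_{\tphi,\phi}$, which is strictly positive: $d_{\tchi,\tphi}\ne 0$ by hypothesis, and $b_{\tphi,\phi}>0$ since $\phi\in\IBr(H\mid\tphi)$. Hence the left-hand side is strictly positive, and since each $a_\chi>0$ while each $d_{\chi,\phi}\ge 0$, there must exist $\chi\in\Irr(H\mid\tchi)$ with $d_{\chi,\phi}\ne 0$, which is the claim. I do not expect any genuine obstacle here: the only place the hypotheses enter is in forcing the $\tphi'=\tphi$ term on the right-hand side to be nonzero, and everything else is dictated by the positivity of the $a_\chi$ and the non-negativity of decomposition numbers and restriction multiplicities.
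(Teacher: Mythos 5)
Your proof is correct and follows the paper's argument exactly: both compute the restriction of $\tchi$ to $H$ evaluated on $\ell'$-elements in two ways, equate coefficients in the basis $\IBr(H)$, and conclude by non-negativity of decomposition numbers and restriction multiplicities. The only difference is that you make the coefficients $b_{\tphi',\phi'}$ fully explicit, whereas the paper leaves the right-hand side as a sum of restrictions $\Res^{\tH}_H\widetilde\psi$.
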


\begin{proof}
Recall $\tchi^0=\sum_{\widetilde\psi\in\IBr(\tH)}d_{\tchi,\widetilde\psi}\widetilde\psi$ and $\Res^{\tH}_{H}\tchi=\sum_{\chi\in\Irr(H\mid\tchi)}m_\chi\chi$ for some integers $m_\chi\ge 1$.
Restricting to $H$ we see 
$$\sum_{\chi\in\Irr(H\mid\tchi)}m_\chi\sum_{\psi\in\IBr(H)}d_{\chi,\psi}\psi=\sum_{\widetilde\psi\in\IBr(\tH)}d_{\tchi,\widetilde\psi}\Res^{\tH}_{H}\widetilde\psi.$$
This gives our claim since all decomposition numbers are non-negative integers.
\end{proof}

For the proof of Theorem \ref{TriangUp} we use the so-called extension maps, see for instance \cite[Definition~2.9]{CS13}.
For $N\unlhd G$ and $\cB\subseteq\Irr(N)$, an \emph{extension map with respect to $N\unlhd G$ for $\cB$} is a map $$\Pi:\cB\longrightarrow \coprod_{N\le I\le G} \Irr(I)$$
such that every $\chi\in\cB$ extends to $\Pi(\chi)\in\Irr(G_\chi)$.
If $\cB$ is $G$-stable and every character $\chi\in\cB$ extends to its stabilizer $G_\chi$ in $G$, such a map $\Pi$ exists and can be assumed to be $G$-equivariant by choosing representatives for the right cosets of $G_\chi$ for every $\chi\in\cB$, and a fixed extension of $\chi$.

In connection with Theorem \ref{TriangUp} we now prove the following ``inductive" statement:

\begin{prop}\label{IndTri}
Let $N\lhd M$ be finite groups such that $|M/N|$ is a prime.
Let $\cB\subseteq\Irr(N)$ and $\cC\subseteq\IBr(N)$ both $M$-stable, such that $|\cB|=|\cC|$ and $\Dec_{\cB,\cC}$ is unitriangular. Then:
\begin{enumerate}[\rm(i)]
\item there is a subset $\cB_M\subseteq \Irr(M\mid\cB)$ such that ${|\cB_M|=|\IBr(M\mid\cC)|}$ and $\Dec_{\cB_M,\IBr(M\mid\cC)}$ is unitriangular.
\item If moreover $\ell\nmid |M/N|$ then $\cB_M=\Irr(M\mid\cB)$.
\item Assume that $|M/N|=\ell$ and let $\Lambda$ be an $M$-equivariant extension map with respect to $N\lhd M$ for $\cB$.
Then one can choose $\cB_M=\{ \Ind^{M}_{M_\theta}(\Lambda(\theta))\mid\theta\in\cB \}$.
If moreover $N$ and $M$ are both normal in some group $G$ stabilizing $\cB$ and $\Lambda$ is $G$-equivariant, then the set $\cB_M$ just defined is $G$-stable.
\end{enumerate}
\end{prop}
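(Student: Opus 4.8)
The plan is to reduce everything to the two cases $\ell \nmid |M/N|$ and $\ell = |M/N|$, which behave quite differently. In both cases the core tool is Lemma~\ref{lem_trian}: since $\cB$ and $\cC$ are $M$-stable, the cyclic group $M/N$ acts on both, and these actions are compatible with $\Dec_{\cB,\cC}$ in the sense that the permutation matrices recording the $M/N$-action commute with the unitriangular matrix $u:=\Dec_{\cB,\cC}$. Hence by Lemma~\ref{lem_trian} we may fix orderings of $\cB$ and $\cC$ making $u$ lower unitriangular and making the $M/N$-orbits on both sets consecutive intervals $[a_i,b_i]$; moreover, because $u$ is unitriangular and commutes with the orbit permutations, matching up the intervals shows that the $i$-th orbit in $\cB$ and the $i$-th orbit in $\cC$ have the same length (the diagonal $1$'s force the orbit structure to line up block-diagonally). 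The first step is to record this normal form carefully.

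Next I would treat \textbf{case $\ell \nmid |M/N|$}, which gives (ii). Here restriction $\IBr(M) \to \IBr(N)$ and $\Irr(M)\to\Irr(N)$ behave transparently: for $\theta\in\cB$ of orbit length $e \mid |M/N|$, $\Ind_N^M\theta$ and restriction back give exactly $|M/N|/e$ distinct irreducible constituents over $\theta$, and similarly for $\phi\in\cC$ in $\IBr$; and a Brauer character of $M$ lying over $\phi$ restricts to a sum of the constituents over $\phi$. Using the block form of $u$ from Step 1 one checks that the induced decomposition matrix $\Dec_{\Irr(M\mid\cB),\IBr(M\mid\cC)}$ is again square (the counts match orbit-block by orbit-block) and unitriangular — the off-diagonal structure is inherited from $u$ and the diagonal $1$'s survive because $d_{\chi,\phi}$ for $\chi\in\Irr(M\mid\theta)$, $\phi\in\IBr(M\mid\theta)$ "in the same Clifford component" is $1$ when $d_{\theta,\theta}=1$. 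So one may take $\cB_M=\Irr(M\mid\cB)$.

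Then \textbf{case $\ell = |M/N|$}, giving (iii) and hence (i). Now $\Irr(M/N)$ consists only of $\ell$-power-order characters, so $\IBr(M/N)$ is trivial and $\IBr(M\mid\phi)$ is a \emph{singleton} for every $\phi\in\cC$ — write it $\hat\phi$. For $\theta\in\cB$ with stabilizer $M_\theta$: if $\theta$ is $M$-invariant it has $\ell$ extensions to $M$; if not, $M_\theta=N$ and $\Ind_N^M\theta$ is the unique irreducible character over $\theta$. Given the $M$-equivariant extension map $\Lambda$, set $\cB_M := \{\,\Ind^M_{M_\theta}(\Lambda(\theta)) : \theta\in\cB\,\}$; note that on a nontrivial $M/N$-orbit $\{\theta,{}^m\theta,\dots\}$ the characters $\Ind^M_N(\theta)$ all coincide, so $|\cB_M|$ equals the number of $M/N$-orbits on $\cB$, which by Step 1 equals the number of $M/N$-orbits on $\cC$, which equals $|\IBr(M\mid\cC)|$. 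The remaining task is to order $\cB_M$ and $\IBr(M\mid\cC)=\{\hat\phi:\phi\in\cC\text{, one per orbit}\}$ compatibly — using the interval ordering from Step 1, send the block for the $i$-th orbit of $\cC$ (a single $\hat\phi$) to the block for the $i$-th orbit of $\cB$ (a single element of $\cB_M$) — and to verify unitriangularity: the diagonal entry $d_{\Ind^M_{M_\theta}\Lambda(\theta),\hat\phi}$ with $\theta,\phi$ corresponding under $u$'s diagonal is computed by restricting to $N$ via Lemma~\ref{lem_orb} and using $d_{\theta,\phi}=1$, while entries above the diagonal vanish because any $\hat\psi$ appearing must lie over some $\phi'\in\cC$ with $d_{\theta',\phi'}\ne0$ for $\theta'$ a constituent of $\Res^M_N\Ind^M_{M_\theta}\Lambda(\theta)\in M$-orbit of $\theta$, forcing $\phi'$ to be weakly below $\theta$ in the ordering. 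Finally, $G$-stability of $\cB_M$ is immediate from $G$-equivariance of $\Lambda$ and $G$-stability of $\cB$, since $G$ permutes the pairs $(\theta,\Lambda(\theta))$ and hence the induced characters.

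The main obstacle I anticipate is Step 1 together with the unitriangularity check in case $\ell=|M/N|$: one must argue cleanly that the $M/N$-action on the \emph{rows} $\cB$ and on the \emph{columns} $\cC$ produce matching orbit-length data purely from the fact that the orbit permutation matrices commute with a unitriangular matrix (this is exactly what Lemma~\ref{lem_trian} is engineered for, but extracting the equal-length conclusion and the correct correspondence of orbits needs care), and then that after inducing, the new "diagonal" entries are genuinely $1$ rather than merely nonzero — this requires tracking multiplicities through Clifford theory and the hypothesis that elements of $\cB$ extend to their stabilizers, via Lemma~\ref{lem_orb} for the non-vanishing and a dimension/degree count for the exact value.
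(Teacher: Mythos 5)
Your plan is correct and follows essentially the same route as the paper: the paper likewise first extracts an $M$-equivariant bijection $f:\cB\to\cC$ from the unitriangular shape (you encode this implicitly as ``the diagonal $1$'s force the orbit structure to line up''), applies Lemma~\ref{lem_trian} to make $M$-orbits consecutive intervals, and then splits into the cases $|M/N|\ne\ell$ and $|M/N|=\ell$, proving a ``unique diagonal $1$'' claim in each via Lemma~\ref{lem_orb} and a restriction/multiplicity count exactly as you sketch. The only cosmetic difference is that the paper states the bijection $f$ and its $M$-equivariance explicitly before invoking Lemma~\ref{lem_trian}, which makes the subsequent identification of row- and column-permutations into a single group $S$ cleaner; your plan reaches the same place but would need to make that identification precise when writing it out.
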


\begin{proof} First observe that since $M/N$ is cyclic, any element of $\Irr(N)\cup\IBr(N)$ extends to its stabilizer in $M$.
	
By a classical argument on permutation and triangular matrices (see for instance the proof of \cite[7.5]{CS13}) the unitriangularity hypothesis defines a unique bijection
$$f:\cB \stackrel{1-1}{\longrightarrow} \cC$$
making $\Dec_{\cB,\cC}$ unitriangular for some total ordering $\le$ on $\cB$ and $\cC$.	
Since  those sets are $M$-stable, the bijection $f$ is $M$-equivariant by uniqueness and the fact that $d_{{}^\sigma\chi,{}^\sigma\phi}=d_{\chi,\phi}$ for any $\chi\in\Irr(N)$, $\phi\in\IBr(N)$ and $\sigma\in\Aut(N)$ (see also \cite[2.3]{De17}).
According to Lemma \ref{lem_trian} the ordering $\le$ can be chosen so that the $M$-orbits $\cB_1,\cB_2,\ldots,\cB_m$ in $\cB$ satisfy $\chi_i<\chi_j$ for any $\chi_i\in\cB_i$, $\chi_j\in\cB_j$ with $i<j$.
The same is of course true for the $M$-orbits $\cC_i=f(\cB_i)$ on $\cC$.

Let $r:=|M/N|$.

Assume first that $r\ne\ell$.
Define $\tcB_i=\Irr(M\mid\cB_i)$ and $\tcC_i=\IBr(M\mid\cC_i)$, so that Clifford theory implies $\Irr(M\mid\cB)=\coprod_{i=1}^m\tcB_i$ and $\IBr(M\mid\cC)=\coprod_{i=1}^m\tcC_i$
with $|\tcB_i|=|\tcC_i|=r|\cB_i|^{-1}$ for any $i$.

We define a total ordering $\preceq$ on $\Irr(M\mid\cB)$ as follows: We choose an arbitrary total ordering $\preceq$ on each $\tcB_i$ and impose $\tchi_i\prec\tchi_j$ for any $\tchi_i\in\tcB_i$, $\tchi_j\in\tcB_j$ with $i<j$.
We will choose a total ordering $\preceq$ on $\Irr(M\mid\cC)$ but impose already $\tphi_i\prec\tphi_j$ for any $\tphi_i\in\tcC_i$, $\tphi_j\in\tcC_j$ with $i<j$ and will check unitriangularity of the decomposition matrix.

According to Lemma \ref{lem_orb} the unitriangularity of $\Dec_{\cB,\cC}$ implies $d_{\tchi,\tphi}=0$ whenever $\tchi\in\tcB_i$, $\tphi\in\tcC_j$ with $i<j$.

Let us now show that
\begin{equation}\label{equation}
\textrm{for every}\ \tchi\in\tcB_i\ \textrm{there is a unique}\ \tphi\in\tcC_i\ \textrm{such that}\ d_{\tchi,\tphi}\ne 0\ \textrm{and then}\ d_{\tchi,\tphi}=1.
\addtocounter{thm}{1}\tag{\thethm}
\end{equation}

We fix $i$ and $\tchi\in\tcB_i$.
Note first that there is always some $\tphi\in\tcC_i\ \textrm{such that}\ d_{\tchi,\tphi}\ne 0$. Indeed since $\tchi\in\Irr(M\mid\chi)$ for some $\chi\in\cB_i$ and $d_{\chi, f(\chi)}=1$, we can write $\Res_N^M\tchi^0=\sum_{\tphi\in\IBr(M)}d_{\tchi,\tphi}\Res_N^M\tphi \in f(\chi)+{\Bbb N}\IBr(N)$. But for $f(\chi)$ to appear in the first sum, there has to be some $\tphi\in\IBr(M\mid f(\chi))\subseteq\tcC_i$ such that $d_{\tchi,\tphi}\ne 0$. We now proceed to prove the rest of (\ref{equation}).

Assume now $|\tcB_i|=|\tcC_i|=1$ and $|\cB_i|=|\cC_i|=r$. Then $\tchi=\Ind_N^M\chi$ for some (in fact every) $\chi\in\cB_i$ and $\tphi=\Ind_N^M\phi$ for every $\phi\in\cC_i$.
Take $\chi\in\cB_i$ the smallest element for $\le$, then $f(\chi)$ is also $\le$-minimal in $\cC_i$. We have $\tphi=\Ind_N^M f(\chi)$ and the lower unitriangularity of $\Dec_{\cB,\cC}$ via $f$ and $\leq$ implies
$$\tchi^0=\Ind_N^M\chi^0\in \tphi +\sum_{\psi\in\cC,\psi<f(\chi)}\mathbb{C} \Ind_N^M\psi \ +\ \Ind_N^M\Psi$$ where $\Psi\in\mathbb{C}(\IBr(N)\setminus\cC)$.
For every $\chi$, one sees easily that some $\Ind_N^M\psi$ can have the irreducible character $\tphi=\Ind_N^Mf(\chi)$ as a constituent only if $\psi$ is in the $M$-orbit of $f(\chi)$, that is in $\cC_i$.
But the choice made of $\chi\in\cB_i$ implies that $f(\chi)$ is the smallest element of $\cC_i$ for $\le$.
So we indeed get $d_{\tchi,\tphi}=1$ as claimed.

Assume that $|\tcB_i|=|\tcC_i|=r$ and $|\cB_i|=|\cC_i|=1$, so that $\cB_i=\{\theta\}$ and $\tcB_i$ is the set of $r$ different extensions of $\theta$ to $M$.
Let $\tchi\in \tcB_i$ and assume there are $\tphi$ and $\tphi'$ in $\tcC_i$ such that
$$\tchi^0=\tphi+\tphi'+\sum_{\rho\in\IBr(M)}d_\rho\rho$$
for non-negative integers $d_\rho$'s.
Restricting to $N$, we see that $\theta^0$ has the unique element of $\tcC_i=\{f(\theta) \}$ with multiplicity $\ge 2$, which contradicts our assumption that $d_{\theta ,f(\theta)}=1$.
So we get both claims of (\ref{equation}).

From (\ref{equation}) we obtain a bijection $\tcB_i\to\tcC_i$ and via this map a total ordering $\preceq$ on $\tcC_i$ corresponding to the one on $\tcB_i$.
This extends into a total ordering $\preceq$ on $\IBr(M\mid\tcC)$ by setting $\tphi\prec\tphi'$ whenever $\tphi\in\tcC_i$ and $\tphi'\in\tcC_j$ with $i<j$ as said before.
Let us show that $\Dec_{\tcB,\tcC}$ is unitriangular with respect to the given ordering.
By (\ref{equation}) the decomposition matrix is the identity matrix on each $\tcB_i\ti\tcC_i$.
We have seen before that $d_{\tchi,\tphi}=0$ whenever $\tchi\in\tcB_i$ and $\tphi\in\tcC_j$ with $i<j$.
Defining $\cB_M=\Irr(M\mid\cB)$ finishes our proof of (i) and (ii) in the cases where $r\ne\ell$.

It remains to consider the case where $r=\ell$.

Recall that $\Lambda$ denotes an $M$-equivariant extension map with respect to $N\lhd M$ for $\cB$. It always exists since $M/N$ is cyclic.
Let $\cB_i$ be an $M$-orbit in $\cB$, $\theta\in\cB_i$ its $\le$-smallest element and set $$\cB_i':=\{ \Ind^{M}_{M_\theta}(\Lambda(\theta)) \}$$ and $\tcC_i=\IBr(M\mid\cC_i)$. The latter is a singleton since $\cC_i$ is an $M$-orbit and, as said at the beginning of Section~\ref{Preliminiaries}, $\IBr(M\mid\phi)$ is an $\IBr(M/N)$-orbit for any $\phi\in\IBr(N)$.
Note that in (iii) the set $\cB_M:=\cup_i\cB_i'=\{ \Ind^{M}_{M_\theta}(\Lambda(\theta))\mid\theta\in\cB \}$ is $G$-stable whenever $\Lambda$ is $G$-equivariant.
Again we define an ordering $\preceq$ on $\cB_M$ and $\tcC:=\cup_i\tcC_i$ by $\chi_i\prec\chi_j$, whenever $\chi_i\in\cB_i'$ and $\chi_j\in\cB_j'$ for $i<j$, and we set analogously $\phi_i\prec\phi_j$, whenever $\phi_i\in\tcC_i$ and $\phi_j\in\tcC_j$ for $i<j$.
Since the sets $\cB_i'$ and $\tcC_i$ are singletons, this gives a total ordering on $\cB_M$ and $\tcC$.
The corresponding decomposition matrix is lower triangular by that choice thanks to Lemma \ref{lem_orb}.
With the arguments above proving (\ref{equation}) we see easily that $d_{\tchi,\tphi}=1$ whenever $\tchi\in\cB_i'$ and $\tphi\in\tcC_i$. 
This finishes the proof of (i) and (iii), the last statement of (iii) being clear from the rest.
\end{proof}

Now we are ready to prove our Theorem \ref{TriangUp}.

\begin{proof}[Proof of Theorem \ref{TriangUp}]
Let $\Lambda$ be a $G$-equivariant extension map with respect to $N\unlhd G$ for $\cB$.
This map exists by our assumption.

We use induction on $|G/N|$.
The case $G=N$ is trivial.
Otherwise, since $G/N$ is abelian, there is $N\le M\le G$ with prime index $|M/N|$ and we apply Proposition \ref{IndTri}.
Now let us ensure that for $M\le G$ all the hypotheses of Theorem \ref{TriangUp} are satisfied with respect to $\cB_M\subseteq\Irr(M)$, and $\IBr(M\mid\cC)\subseteq\IBr(M)$.
According to Proposition \ref{IndTri} it remains only to prove that every $\chi\in\cB_M$ extends to $G_\chi$.
Since $\cB_M\subseteq\Irr(M\mid\cB)$ there is $\theta\in\cB\cap\Irr(N\mid\chi)$.
Clifford theory implies that $$\chi=\Ind^M_{M\cap G_\theta}(\Res^{G_\theta}_{M\cap G_\theta}(\Lambda(\theta))\la)$$ where $\Lambda$ is the extension map introduced before and $\la\in\Irr(M\cap G_\theta\mid 1_N)$, a linear character.
Let's note that $G_\chi=G_\theta M$. Indeed $G_\theta\leq G_\chi$ by the above formula for $\chi$, while the inclusion $G_\chi\leq G_\theta M$ results from $\Res^M_N\chi =\sum_{m\in M/M_\theta}{}^m\theta$.
Since $G/N$ is abelian, $\la$ extends to some $\widetilde\la\in\Irr(G_\theta\mid 1_N)$.
Then $\Ind_{G_\theta}^{G_\chi}(\Lambda(\theta)\widetilde\la)$ is an extension of $\chi$ as can be seen from the Mackey formula.
This proves that the assumptions of Theorem \ref{TriangUp} are satisfied by $M\le G$ and the sets $\cB_M\subseteq\Irr(M)$ and $\IBr(M\mid\cC)$.
This obviously gives our claim by induction.
\end{proof}

The proof of Theorem \ref{TriangUp} allows for a generalization, since when $G/N$ is an $\ell'$-group the proof does not use that $G/N$ is abelian or the existence of an extension map. Recall that we consider here $\ell$-Brauer characters.

\begin{cor}\label{cor}
Let $N\unlhd G$, $\cB\subseteq \Irr(N)$ and $\cC\subseteq\IBr(N)$ such that $|\cB|=|\cC|$ and  $\Dec_{\cB,\cC}$ is unitriangular. 
If $\cB$ and $\cC$ are both $G$-stable and $G/N$ is a solvable $\ell'$-group, then $\tcB:=\Irr(G\mid\cB)$ and $\tcC:=\IBr(G\mid\cC)$ have same cardinality and $\Dec_{\tcB,\tcC}$ is unitriangular.
\end{cor}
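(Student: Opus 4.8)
The plan is to run the same induction as in the proof of Theorem~\ref{TriangUp}, on $|G/N|$, but to choose the intermediate subgroup of \emph{prime index in $G$} rather than of prime index over $N$. This choice is forced: for solvable (possibly non-abelian) $G/N$ there need not exist $M\lhd G$ with $|M/N|$ prime, whereas a nontrivial solvable group always has a nontrivial abelian quotient, hence a quotient of prime order, so there is always $N\le M\lhd G$ with $M<G$ and $|G/M|=p$ a prime. Since $G/N$ is an $\ell'$-group, $p\ne\ell$. The base case $G=N$ is trivial, so assume $G\ne N$ and fix such an $M$.

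First I would apply the inductive hypothesis to the inclusion $N\unlhd M$: the sets $\cB,\cC$ are $M$-stable (being $G$-stable), $M/N$ is a solvable $\ell'$-group with $|M/N|<|G/N|$, $|\cB|=|\cC|$ and $\Dec_{\cB,\cC}$ is unitriangular, so the inductive hypothesis yields that $\cB_M:=\Irr(M\mid\cB)$ and $\cC_M:=\IBr(M\mid\cC)$ satisfy $|\cB_M|=|\cC_M|$ and that $\Dec_{\cB_M,\cC_M}$ is unitriangular. Because $M\lhd G$ and $\cB,\cC$ are $G$-stable, the sets $\cB_M$ and $\cC_M$ are again $G$-stable.

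Next I would apply Proposition~\ref{IndTri} to $M\lhd G$, of prime index $p\ne\ell$, with the $G$-stable sets $\cB_M$, $\cC_M$ in place of $\cB$, $\cC$. Part~(i) produces a subset of $\Irr(G\mid\cB_M)$ of the same cardinality as $\IBr(G\mid\cC_M)$ on which the decomposition submatrix is unitriangular, and part~(ii), applicable since $\ell\nmid|G/M|$, identifies this subset with all of $\Irr(G\mid\cB_M)$. Transitivity of Clifford theory over the chain $N\le M\le G$ gives $\Irr(G\mid\cB_M)=\Irr(G\mid\cB)=\tcB$ and $\IBr(G\mid\cC_M)=\IBr(G\mid\cC)=\tcC$, so $|\tcB|=|\tcC|$ and $\Dec_{\tcB,\tcC}$ is unitriangular, which closes the induction.

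I do not expect a genuinely hard step: the delicate reshuffling of the orderings of $\tcB$ and $\tcC$ is already packaged inside Proposition~\ref{IndTri} (via Lemmas~\ref{lem_trian} and~\ref{lem_orb}). The two points needing care are the group-theoretic reduction — producing a prime-index normal subgroup above $N$, which is exactly where solvability of $G/N$ is used — and checking that $G$-stability passes from $\cB,\cC$ to $\cB_M,\cC_M$, which is what legitimises the outer application of Proposition~\ref{IndTri}. Note that, in contrast with Theorem~\ref{TriangUp}, the extendibility hypothesis on characters of $N$ plays no role, since in the $\ell'$-case Proposition~\ref{IndTri}(ii) takes $\cB_M=\Irr(M\mid\cB)$ outright, with no recourse to an extension map.
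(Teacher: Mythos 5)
Your proof is correct and follows essentially the same route as the paper's: induction on $|G/N|$, choosing a normal subgroup $K$ (your $M$) of prime index $p\ne\ell$ in $G$ containing $N$, applying the inductive hypothesis to $N\unlhd K$ and then Proposition~\ref{IndTri}(i),(ii) to $K\lhd G$, with the $G$-stability of $\Irr(K\mid\cB)$ and $\IBr(K\mid\cC)$ and transitivity of the Clifford-theoretic constructions closing the argument. Your remark that the prime index must be taken at the top, not directly above $N$, is exactly the right structural observation distinguishing the solvable from the abelian case, though the paper leaves it implicit.
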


\begin{proof} We use induction on $|G/N|$, the case $N=G$ being trivial. Assume now
 $G/N$ is a solvable non-trivial $\ell'$-group, so there exists $N\leq K\unlhd G$ with $G/K$ of prime order $\not=\ell$. Induction tells us that $\cB ':=\Irr(K\mid\cB)$ and $\cC ':=\IBr(K\mid\cC)$ have same cardinality and $\Dec_{\cB ',\cC '}$ is unitriangular. Note also that from their definition $\cB '$ and $\cC '$ are clearly $G$-stable. We may then apply Proposition \ref{IndTri} (i) and (ii) to the inclusion $K\lhd G$ and the sets $\cB '$ and $\cC '$. We get that $\Irr (G\mid \cB ')$ and $\IBr (G\mid \cC ')$ have same cardinality and that the associated decomposition matrix is unitriangular. This gives our claim since clearly $\Irr (G\mid \cB ') = \Irr (G\mid \Irr(K,\cB))=\Irr (G\mid \cB)$ and $\IBr (G\mid \cC ') = \IBr (G\mid \IBr(K,\cC))=\IBr (G\mid \cC)$.	
\end{proof}

\begin{cor}\label{going-up-special}
	Let $N\unlhd G$ be finite groups with solvable quotient $G/N$ and $Z\le\ZZ(G)$ such that 
	 $\ell\nmid |G/NZ|$ and $\ell\nmid |Z\cap N|$. 
	Let $\cB\subseteq \Irr(N)$ and $\cC\subseteq\IBr(N)$ such that $|\cB|=|\cC|$ and  $\Dec_{\cB,\cC}$ is unitriangular. 
	Assume that $\cB$ and $\cC$ are both $G$-stable, and let 
	$\tcB=\Irr(G\mid\cB)\cap\Irr(G\mid 1_{Z_\ell})$ and
	$\tcC=\IBr(G\mid\cC)$. 
	Then $\Dec_{\tcB,\tcC}$ is unitriangular.
\end{cor}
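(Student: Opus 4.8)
The strategy is to reduce to the situation already handled, namely Theorem \ref{TriangUp} (for an abelian $\ell$-quotient, together with an extension hypothesis) and Corollary \ref{cor} (for a solvable $\ell'$-quotient), by interposing an appropriate intermediate subgroup. First I would set $K := NZ$, so that $N \unlhd K$, the quotient $K/N \cong Z/(Z\cap N)$ is abelian, and crucially $\ell \nmid |Z\cap N|$ forces every element of $\Irr(N)$ to extend to its stabilizer in $K$: indeed $Z$ is central, so the stabilizer in $K$ of $\theta \in \Irr(N)$ is all of $K$ when $\theta$ is $Z$-invariant and otherwise one uses that $Z$ acts on $\Irr(N\mid\theta|_{\ZZ})$ through the $\ell'$-group $Z/(Z\cap N)$... more directly, since $K/N$ is an abelian $\ell'$-group one can invoke that characters always extend to their stabilizer when the quotient is cyclic, applied prime by prime (as in the first line of the proof of Proposition \ref{IndTri}); here $K/N$ being an $\ell'$-group is what I actually need to also track the central-character condition below. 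So Theorem \ref{TriangUp}, or rather its $\ell'$-analogue Corollary \ref{cor}, applies to $N \unlhd K$ and yields that $\Dec_{\Irr(K\mid\cB),\IBr(K\mid\cC)}$ is unitriangular, with $\Irr(K\mid\cB)$ and $\IBr(K\mid\cC)$ both $G$-stable.

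Next I would pass from $K$ to $G$. We have $K \unlhd G$ with $G/K = G/NZ$ a solvable $\ell'$-group by hypothesis, so Corollary \ref{cor} applies directly to the inclusion $K \unlhd G$ and the $G$-stable sets $\Irr(K\mid\cB)$, $\IBr(K\mid\cC)$, giving that $\Dec_{\Irr(G\mid\cB),\IBr(G\mid\cC)}$ is unitriangular. Since $\IBr(G\mid\cC) = \tcC$ by definition, this almost finishes the argument — the only discrepancy is that the theorem produces the full set $\Irr(G\mid\cB)$ as the row-index set, whereas the claimed $\tcB$ is the subset $\Irr(G\mid\cB)\cap\Irr(G\mid 1_{Z_\ell})$. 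So the remaining point is to see that these two sets actually coincide, i.e. that every $\chi \in \Irr(G\mid\cB)$ already lies over $1_{Z_\ell}$, equivalently that $Z_\ell$ (the Sylow $\ell$-subgroup of $Z$) acts trivially on every such $\chi$.

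This is where I would use the hypothesis $\ell \nmid |Z\cap N|$. Let $\chi \in \Irr(G\mid\cB)$ and pick $\theta \in \cB \cap \Irr(N\mid\chi)$. The central character of $\chi$ restricted to $Z$ is a linear character $\omega$ of $Z$; its restriction to $Z\cap N$ is the central character of $\theta$ on $Z\cap N$. Now $Z_\ell \cap N \le Z\cap N$ is trivial since $\ell\nmid|Z\cap N|$, so $Z = Z_\ell \times Z_{\ell'}$ with $Z_\ell$ intersecting $N$ trivially and mapping isomorphically into... hmm, not quite — one needs that $\chi^0$ lying in $\Z\cdot\cC$ forces $Z_\ell$ in the kernel of the corresponding Brauer character, hence $\omega|_{Z_\ell}$ trivial because an $\ell$-element acts trivially on a Brauer character through a linear character of $\ell'$-order. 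Concretely: $\chi \in \Irr(G\mid\cB)$ means $d_{\chi,\phi}\ne 0$ for some $\phi \in \tcC = \IBr(G\mid\cC)$, and such $\phi$ restricts to $N$ to contain some element of $\cC \subseteq \IBr(N)$; since $Z_\ell$ is an $\ell$-group it acts trivially on every Brauer character of $G$, in particular $Z_\ell \subseteq \ker\phi$ viewed as linear-character-action, so $Z_\ell$ acts trivially on $\chi^0$ and hence the central character $\omega$ satisfies $\omega|_{Z_\ell} = 1$, i.e. $\chi \in \Irr(G\mid 1_{Z_\ell})$. Thus $\Irr(G\mid\cB) \subseteq \Irr(G\mid 1_{Z_\ell})$, the reverse inclusion of the intersection is trivial, and $\tcB = \Irr(G\mid\cB)$, completing the proof.

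**Main obstacle.** The genuinely delicate point is the last paragraph: making precise and correct the claim that membership of $\chi$ in $\Irr(G\mid\cB)$ (through a nonzero decomposition number against $\tcC$) forces $Z_\ell$ into the "kernel" appropriately, using only $\ell\nmid|Z\cap N|$ and not, say, $\ell\nmid|Z|$. One must be careful that $\cB$-membership is about ordinary characters while the $Z_\ell$-triviality is naturally a Brauer-character statement, and bridge the two via the decomposition map; the coprimality $\ell\nmid|Z\cap N|$ is exactly what guarantees the ordinary central character on $Z\cap N$ (hence on a suitable complement controlling the $\ell$-part) is compatible with $\cC$ lying in $\IBr(N)$. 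The other steps — interposing $K=NZ$ and chaining Corollary \ref{cor} twice — are routine once the set-theoretic identification $\tcB = \Irr(G\mid\cB)$ is in hand.
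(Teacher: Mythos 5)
Your proposal breaks down at the first step because $K/N = NZ/N \cong Z/(Z\cap N)$ is \emph{not} an $\ell'$-group in general. The hypothesis $\ell\nmid |Z\cap N|$ only says the intersection has no $\ell$-part; the Sylow $\ell$-subgroup $Z_\ell$ of $Z$ can certainly be nontrivial, and it survives in the quotient $Z/(Z\cap N)$. So Corollary~\ref{cor} does not apply to $N\unlhd K$ and you cannot conclude that $\Dec_{\Irr(K\mid\cB),\IBr(K\mid\cC)}$ is unitriangular. You could instead invoke Theorem~\ref{TriangUp}, but that produces only \emph{some} proper subset $\tcB_K\subseteq\Irr(K\mid\cB)$ as the row-index set, and you would then have to identify it.

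Your final step is also false: $\Irr(G\mid\cB)$ is \emph{not} contained in $\Irr(G\mid 1_{Z_\ell})$. Take the degenerate example $N=1$, $Z=G$ cyclic of order $\ell$, $\cB=\cC=\{1_N\}$. All hypotheses hold, $\Irr(G\mid\cB)=\Irr(G)$ has $\ell$ elements, while $\tcB=\Irr(G\mid 1_{Z_\ell})=\{1_G\}$ is a singleton; only the latter pairs with $\tcC=\{1_G^0\}$ to give a $1\times 1$ unitriangular matrix. Your argument via decomposition numbers conflates two things: $\chi\in\Irr(G\mid\cB)$ is a Clifford-theoretic condition (some $\theta\in\cB$ appears in $\Res^G_N\chi$), not the condition $d_{\chi,\phi}\ne 0$ for some $\phi\in\tcC$. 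Moreover even if all $\phi$ with $d_{\chi,\phi}\ne 0$ have $Z_\ell$ in their kernel (true, since the $\mathbf F$-representation kills any central $\ell$-element), that does not force the ordinary central character $\omega_\chi|_{Z_\ell}$ to be trivial — it is an $\ell$-power root of unity in $\mathbb C$ that simply becomes $1$ after reduction mod $\ell$.

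The paper's fix is precisely to interpose not $K=NZ$ but $G_1 := NZ_\ell = N\times Z_\ell$ (a direct product, thanks to $\ell\nmid|Z\cap N|$), to observe that $\IBr(G_1)$ identifies with $\IBr(N)$, and to pair $\cC$ explicitly with $\cB_1 := \{\chi\times 1_{Z_\ell}\mid\chi\in\cB\}$ so that $\Dec_{\cB_1,\cC}=\Dec_{\cB,\cC}$. Then $G/G_1$ is a solvable $\ell'$-group, Corollary~\ref{cor} applies in one shot, and one checks directly that $\Irr(G\mid\cB_1) = \Irr(G\mid\cB)\cap\Irr(G\mid 1_{Z_\ell})$. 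The extra $Z_\ell$-factor is handled by building the basic set by hand on $G_1$ rather than by trying to show, as you do, that the $1_{Z_\ell}$-condition is automatic — it is not.
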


\begin{proof}
By the assumption, $G_1:=N Z_\ell=N\ti Z_\ell$ is a direct product.
	So $\cC$ can be also regarded as a subset of $\IBr(G_1)$.
Let $\cB_1=\{\chi\times 1_{Z_\ell}\mid\chi\in\cB\}$.
Then  $\cB_1$ is $G$-stable and $\Dec_{\cB_1,\cC}=\Dec_{\cB,\cC}$ is unitriangular.
By Corollary \ref{cor},
 $\Dec_{\tcB,\tcC}$ is unitriangular for $\tcB=\Irr(G\mid\cB_1)$ and $\tcC=\IBr(G\mid\cC)$ since $G/G_1$ is a solvable $\ell'$-group.
Here, $\tcB=\Irr(G\mid\cB)\cap\Irr(G\mid 1_{Z_\ell})$, and this completes the proof.
\end{proof}

\begin{defn}\label{DefBasic}
If $\ell$ is a prime, $H$ is a finite group and $\mathcal I\subseteq \Irr(H)$, a subset $\cB\subseteq \mathcal I$ is called \emph{basic} if $(\chi^0)_{\chi\in\cB}$ is a basis for the $\Z$-lattice generated by $\{\chi^0\mid \chi\in\mathcal I\}$.
If $B$ is a union of $\ell$-blocks of $H$, a \emph{basic set} for $B$ is a basic subset of $\Irr(B)$.
Such a basic set $\cB$ is called \emph{unitriangular} whenever the decomposition matrix $\Dec_{\cB,\IBr(B)}$ is unitriangular for some ordering of the rows and columns. Conversely, note that if $\cB$ is a subset of $\Irr(B)$ such that $\Dec_{\cB,\IBr(B)}$ is square unitriangular then $\cB$ is a basic set for $B$.
\end{defn}

Here are some comments on Theorem  \ref{TriangUp} and Proposition \ref{IndTri}.

\begin{rmk}\label{remTri}
\begin{enumerate}[\rm(i)]
	\item Theorem \ref{TriangUp} should be compared with Denoncin's ``going-down" statement \cite[Thm.~2.5]{De17}.
For $N\unlhd G$ with cyclic quotient $G/N$ a unitriangular $\IBr(G/N)$-stable basic set $\tcB\subseteq\Irr(G)$ allows to construct one for $N$ if the underlying bijection $\widetilde f:\tcB\to\IBr(G)$ satisfies $G_\chi=G_\phi$ whenever $\chi\in\Irr(N\mid\tchi)$, $\phi\in\IBr(N\mid\widetilde f(\tchi))$ for some $\tchi\in\tcB$.
	
	In the situation of Theorem \ref{TriangUp} the given unitriangular basic set $\cB$ is $G$-stable and hence $G_\chi=G_{f(\chi)}$ for every $\chi\in\cB$, where $f:\cB\to\cC$ is the bijection defined by the unitriangular shape of the decomposition matrix.	
	\item	In the situation of Theorem \ref{TriangUp}, if $G/N$ is an $\ell$-group, then one can take $\tcB=\{ \Ind_{G_\theta}^G(\Lambda(\theta))\mid\theta\in\cB \}$, where $\Lambda$ is the $G$-equivariant extension map with respect to $N\unlhd G$ for $\cB$.
	The set $\tcB$ is obtained by iterating the construction in Proposition \ref{IndTri} (iii).
	Let $N\le M\le G$ with prime index $|M/N|$.
	We use as an extension map $\Lambda_1$ with respect to $N\unlhd M$ for $\cB$ the map given by $\theta\mapsto \Res^{G_\theta}_{M_\theta}(\Lambda(\theta))$.
	Then $\cB_M=\{\Ind^M_{M_\theta}(\Lambda_1(\theta))\mid\theta\in\cB \}$.
	Assuming that $M\unlhd G$, which is possible since $G/N$ is an $\ell$-group, one may use as a $G$-equivariant extension map $\Lambda_2$ with respect to $M\unlhd G$ for $\cB_M$ the map given by $\Ind^M_{M_\theta}(\Lambda_1(\theta))\mapsto\Ind^{G_\theta M}_{G_\theta}(\Lambda(\theta))$ for $\theta\in\cB$.
	Note that $\Lambda_2$ is a well-defined extension map since $\Lambda$ is $G$-equivariant.
	Now by composing the two constructions we get $\tcB=\{\Ind^G_{G_\phi}(\Lambda_2(\phi))\mid\phi\in\cB_M \}$ and this coincides with $\{\Ind^G_{G_\theta}(\Lambda(\theta))\mid\theta\in\cB\}$ by the definition of $\Lambda_1$ and $\Lambda_2$.	
	\item	At least when the quotient $G/N$ is an $\ell$-group the assumption in Theorem~\ref{TriangUp} that every element of $\cB$ extends to its stabilizer in $G$ seems indeed necessary. Let for instance $G$ be a non-abelian $\ell$-group and $N$ its derived subgroup, so that $[N,G]\lneq N\lneq G$. Then $N$ has a non-trivial linear character $\chi\in\Irr(N)$ with $[N,G]$ in its kernel. Now $\chi$ is $G$-invariant and we take $\cB := \{\chi  \}$, while $\cC$ and $\tcC$ are forced to be reduced to the trivial Brauer character. One has $d_{\chi ,1_N^0}=\chi(1)=1$. But for every $\tchi $ of $\Irr(G\mid \chi)$ one has $\tchi(1)\not= 1$ since otherwise $N$ would be in the kernel of $\tchi$ and therefore $\chi =1_N$. Then $d_{\tchi ,1_G^0} =\tchi(1)\not= 1$ which makes impossible to find $\tcB\subseteq \Irr(G\mid \cB)$ such that $\Dec_{\tcB,\tcC}$ is unitriangular.

	\end{enumerate}
\end{rmk}

\section{The inductive Brauer--Glauberman condition}
\label{sec-iBGC}

In this section, we recall the (iBG) condition from \cite{SV16} (see Definition~\ref{iBGC}) and give a new criterion to check it.

Let $\mathbf R$ be the ring of algebraic integers in $\mathbb{C}$ and $I$ be a maximal ideal of $\mathbf R$ with $\ell\mathbf R \subseteq I$. Then the quotient $\mathbf F=\mathbf R/I$ is an algebraically closed field of characteristic $\ell$, see \cite[\S 2]{Na98}.
Let ${}^*:\mathbf R\to  \mathbf F$ be the natural ring homomorphism.

\subsection{Modular character triples}

The notion of character triples and isomorphisms between them is well-known and has important applications. A more restrictive relation, the \emph{central isomorphisms} of character triples,
which was first introduced in~\cite{NS14}, 
is useful in the context of reduction theorems.
Here we refer to the exposition
given in~\cite{Na18} and~\cite{Sp18}.
We will recall a similar notion for modular representations, 
the \emph{central isomorphisms of modular character triples}, which
were used in~\cite{SV16} for constructing the (iBG) condition. 

Let us start by recalling some facts on modular character triples from~\cite[\S8]{Na98}.
If $N\unlhd G$  and $\theta\in\IBr(N)$ is $G$-invariant, then the triple $(G,N,\theta)$ is called a \emph{modular character triple}.
Let $\mathcal D$ be an $\mathbf F$-representation affording the Brauer character $\theta$.
According to~\cite[Thm.~8.14]{Na98}, there is a projective $\mathbf F$-representation $\mathcal P$ of $G$ such that $\mathcal P|_N=\mathcal D$. 
Moreover, $\mathcal P$ can be chosen such that its factor set $\alpha$ satisfies $\alpha(g,n)=\alpha(n,g)=1$ for every $g\in G$ and $n\in N$. In this situation, we say that \emph{$\mathcal P$ is a projective representation of $G$ associated to $\theta$}.
It follows that $\alpha$ can be seen as a map on $G/N\times G/N$.
Furthermore, if $c\in \C_G(N)$, then $\mathcal P(c)$ is a scalar matrix by Schur's Lemma.

Now we recall the definition of a central isomorphism between modular character triples from \cite[Def.~3.3]{SV16} or \cite[Def.~4.19]{Sp18}.

\begin{defn}\label{defn-centiso}
Let $(G,N,\theta)$ and $(H,M,\varphi)$ be modular character triples satisfying the following conditions.
\begin{enumerate}[(i)]
\item $G=NH$, $M=N\cap H$ and $\C_G(N)\le H$.
\item There exist a projective representation $\mathcal P$ of $G$ associated to $\theta$ with factor set $\alpha$ and  a projective representation $\mathcal P'$ of $H$ associated to $\varphi$ with factor set $\alpha'$ such that
\begin{itemize}
\item[(ii1)] $\alpha'=\alpha|_{H\times H}$, and
\item[(ii2)] for every $c\in\C_G(N)$ the scalar matrices $\mathcal P(c)$ and $\mathcal P'(c)$ are associated with the same scalar.
\end{itemize}
\end{enumerate}
Then we say the modular character triples $(G,N,\theta)$ and $(H,M,\varphi)$ are \emph{central isomorphic} and write $(G,N,\theta)\succ_{c}(H,M,\varphi)$.
\end{defn}

\subsection{On central isomorphisms}
We will make use of the following results on central isomorphisms between modular character triples.

Let $L$ be a finite group.
For $\psi\in\IBr(L)$, we let $\mathcal D$ be an $\mathbf F$-linear representation of $L$ affording $\psi$.
If we define $\psi^*(g)=\psi(g_{\ell'})^*$ for every $g\in L$, then by~\cite[Lemma~2.4]{Na98}, $\psi^*$ is the trace function of $\mathcal D$.

The following lemma is similar to~\cite[Lemma~2.11]{Sp12} for the case of Brauer characters. See \cite[2.7]{FR21} for the proof.

\begin{lem}\label{twexten}
Let $L\le \widetilde L \unlhd X$ be finite groups with $L\unlhd X$ and abelian quotient $\widetilde L/L$,
$\psi\in\IBr(L)$ with $X_\psi=X$ and $\widetilde \psi \in\IBr(\widetilde L)$ an extension of $\psi$ to $\widetilde L$.
Assume that there exists a group $C\le X$ with $\widetilde L\cap C=L$ and $X=\widetilde LC$,
such that $\psi$ extends to $C$.
Then there exists a projective representation $\mathcal P$ associated to the character triple $(X,L,\psi)$ such that the
factor set $\alpha$ of $\mathcal P$ satisfies $\alpha(lcL,l'c'L)=\lambda_c^*(l')$, for $c,c'\in C$, and $l,l'\in \widetilde L$,
where for every $c\in C$, the linear Brauer character $\lambda_c\in \IBr(\widetilde L/L)$ is determined by
$\widetilde\psi=\lambda_c \widetilde\psi^c$.
\end{lem}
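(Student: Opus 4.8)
The plan is to transport to the modular setting the argument from \cite[Lemma~2.11]{Sp12}, using the factorization $X=\widetilde LC$ with $\widetilde L\cap C=L$ to glue together an ordinary extension of $\psi$ along $\widetilde L$ with a projective structure along $C$. First I would fix an $\mathbf F$-representation $\mathcal D$ of $L$ affording $\psi$, and, since $\psi$ extends to $C$, fix an honest representation $\mathcal D_C$ of $C$ with $\mathcal D_C|_L=\mathcal D$. Separately, since $\widetilde L/L$ is abelian and $\psi$ is $X$-invariant (hence $\widetilde L$-invariant), the character $\psi$ has an extension $\widetilde\psi$ to $\widetilde L$; fix a representation $\widetilde{\mathcal D}$ of $\widetilde L$ affording $\widetilde\psi$ with $\widetilde{\mathcal D}|_L=\mathcal D$. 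Every element of $X$ can be written as $lc$ with $l\in\widetilde L$, $c\in C$, the decomposition being unique modulo the obvious $L$-ambiguity; define $\mathcal P(lc)=\widetilde{\mathcal D}(l)\,\mathcal D_C(c)$, which is well defined because $\widetilde{\mathcal D}$ and $\mathcal D_C$ agree on $L$. By construction $\mathcal P|_{\widetilde L}=\widetilde{\mathcal D}$ and $\mathcal P|_C=\mathcal D_C$ are genuine representations, so $\mathcal P$ restricts to a representation on $N=L$ (giving $\mathcal P|_L=\mathcal D$), and one checks that the factor set $\alpha$ is trivial on $\widetilde L\times X$ and on $X\times\widetilde L$; in particular $\alpha(g,n)=\alpha(n,g)=1$ for $n\in L$, so $\mathcal P$ is a projective representation associated to the modular character triple $(X,L,\psi)$ in the sense of \cite[\S8]{Na98}.

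Next I would compute the factor set explicitly. For $c,c'\in C$ and $l,l'\in\widetilde L$ we have, modulo scalars,
\[
\mathcal P(lc)\mathcal P(l'c')=\widetilde{\mathcal D}(l)\mathcal D_C(c)\widetilde{\mathcal D}(l')\mathcal D_C(c')
\]
and $\mathcal P(lcl'c')=\mathcal P\bigl(l\,({}^cl')\,cc'\bigr)=\widetilde{\mathcal D}(l)\widetilde{\mathcal D}({}^cl')\mathcal D_C(cc')$, where ${}^cl'=cl'c^{-1}\in\widetilde L$ since $\widetilde L\unlhd X$. So
\[
\alpha(lcL,l'c'L)=\widetilde{\mathcal D}(l)\,\widetilde{\mathcal D}({}^cl')^{-1}\,\mathcal D_C(c)\,\widetilde{\mathcal D}(l')\,\mathcal D_C(c)^{-1}\,\cdot(\text{scalar from }\mathcal D_C(c)\mathcal D_C(c')\text{ vs }\mathcal D_C(cc'));
\]
since $\mathcal D_C$ is an honest representation the last factor is $1$, and the point is that $\mathcal D_C(c)\widetilde{\mathcal D}(l')\mathcal D_C(c)^{-1}$ is a representation of $\widetilde L$ affording $\psi^c$, hence affords $\lambda_c\widetilde\psi=\widetilde\psi$, so it differs from $\widetilde{\mathcal D}({}^cl')$ only by the scalar $\lambda_c^*(l')$ coming from the linear Brauer character $\lambda_c\in\IBr(\widetilde L/L)$ with $\widetilde\psi^c=\lambda_c^{-1}\widetilde\psi$ — equivalently $\widetilde\psi=\lambda_c\widetilde\psi^c$, matching the statement. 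After the scalar bookkeeping this collapses to $\alpha(lcL,l'c'L)=\lambda_c^*(l')$, as desired. Here I use the fact recalled before the lemma, that $\psi^*$ is the trace function of $\mathcal D$, to convert the identity of $\mathbf F$-representations into the stated identity of scalars $\lambda_c^*(l')$.

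The main obstacle, and the step requiring the most care, is the passage from ordinary to Brauer characters in two places: first, justifying that $\psi$ has an extension $\widetilde\psi$ to $\widetilde L$ with $\widetilde L/L$ abelian (this is the Brauer-character analogue of Gallagher/Clifford extension, available since $\widetilde L/L$ is cyclic-by-abelian here — actually one only needs it for the given fixed $\widetilde\psi$, which is part of the hypothesis, so this is not really an issue), and second, ensuring that $\mathcal D_C(c)\widetilde{\mathcal D}(\cdot)\mathcal D_C(c)^{-1}$ and $\lambda_c\otimes\widetilde{\mathcal D}$ are actually \emph{equal} as representations of $\widetilde L$ and not merely isomorphic — this is where one must pin down $\lambda_c$ as a genuine scalar-valued function rather than up to coboundary. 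I would handle this by noting that both are representations extending $\mathcal D$ from $L$ (after twisting), invoking the rigidity of extensions of an irreducible $\mathbf F$-representation along an abelian quotient, exactly as in the ordinary case in \cite{Sp12}; since the detailed verification is routine once this rigidity is in place, I would, following the paper's own style, refer to \cite[2.7]{FR21} for the remaining bookkeeping.
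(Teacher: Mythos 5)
Your proposal is correct and follows essentially the construction that the paper cites from \cite[2.7]{FR21}, the Brauer-character analogue of \cite[Lemma~2.11]{Sp12}: set $\mathcal P(lc)=\widetilde{\mathcal D}(l)\mathcal D_C(c)$, check well-definedness using $\widetilde L\cap C=L$, and identify the factor set via Schur's lemma as the linear twist between the two extensions $l'\mapsto \mathcal D_C(c)\widetilde{\mathcal D}(l')\mathcal D_C(c)^{-1}$ (affording $\widetilde\psi$, since conjugation preserves traces) and $l'\mapsto \widetilde{\mathcal D}({}^cl')$ (affording $\widetilde\psi^c$) of the irreducible $L$-representation $l'\mapsto\mathcal D({}^cl')$. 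One slip to fix: the factor set is \emph{not} trivial on $X\times\widetilde L$ --- taking $c'=1$ and $l'\in\widetilde L\setminus L$ in your formula yields $\alpha(lcL,l'L)=\lambda_c^*(l')$, which is precisely the nontrivial content of the lemma --- what is needed, and what does hold, is only $\alpha(g,n)=\alpha(n,g)=1$ for $n\in L$, which follows from $\lambda_c^*|_L=1$ and $\lambda_1=1$; relatedly, the parenthetical claiming that $\mathcal D_C(c)\widetilde{\mathcal D}(\cdot)\mathcal D_C(c)^{-1}$ affords $\psi^c$ should say it affords $\widetilde\psi$, the identification $\mu_c=\lambda_c^*$ coming from the fact that the two representations above both extend $l'\mapsto\mathcal D({}^cl')$ and afford the same Brauer character, so Schur's lemma forces the intertwiner to be scalar and hence the representations to be equal.
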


We use the above through the following consequence.

\begin{prop}\label{twostepext}
Let $A$ be a finite group and $B\le A$, $N\unlhd A$, $M=N\cap B$ such that $A=NB$.
Let $\theta\in \IBr(N)$ and $\varphi\in\IBr(M)$.
Assume that we have the following conditions.
\begin{enumerate}[\rm(i)]
\item $X$ is a normal subgroup of $A$ such that $N\le X$ and $X/N$ is abelian. $Y$ is a normal subgroup of $B$ such that $M\le Y\le X\cap B$.
       Assume further $\C_A(N)\subseteq Y$.
\item There exist subgroups $H\le A$ and $L\le B\cap H$ satisfying $N=X\cap H$ and  $M=Y\cap L$.
\item $A_\theta=X_\theta H_\theta$, $B_\varphi=Y_\varphi L_\varphi$ and $B_\varphi=B_\theta$.
\item $\theta$ extends to $X_\theta$ and $H_\theta$ while $\varphi$ extends to $Y_\varphi$ and $L_\varphi$.
\item There exist characters $\chi\in\IBr(X\mid\theta)$ and $\psi\in\IBr(Y\mid\varphi)$ satisfying
     \begin{itemize}
\item $\IBr(\C_A(N)\mid\chi)=\IBr(\C_A(N)\mid\psi)$, and
\item for any $l\in L_\varphi$, if  $\chi=\lambda\chi^l$ for some $\lambda\in\IBr(X/N)$,
     then $\psi=\Res^X_Y(\lambda) \psi^l$.
\end{itemize}
\end{enumerate}
Then $(A_\theta,N,\theta)\succ_{c} (B_\varphi,M,\varphi)$.
\end{prop}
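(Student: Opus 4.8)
The plan is to build a projective representation $\mathcal P$ associated to the modular character triple $(A_\theta, N, \theta)$ and a projective representation $\mathcal P'$ associated to $(B_\varphi, M, \varphi)$ whose factor sets satisfy the compatibility conditions (ii1) and (ii2) of Definition~\ref{defn-centiso}, after first checking the purely group-theoretic condition (i). For condition (i): we need $A_\theta = N B_\varphi$, $M = N\cap B_\varphi$, and $\C_{A_\theta}(N)\le B_\varphi$. The equality $B_\varphi = B_\theta$ from hypothesis (iii) gives $N\cap B_\varphi = N\cap B_\theta = M\cap\{g\in B: \theta^g=\theta\}$, and since $\theta$ is $N$-invariant this is $M$; the product decomposition $A_\theta = N B_\varphi$ follows from $A = NB$ together with $B_\varphi = B_\theta$ by a standard Clifford-theoretic argument (if $a=nb\in A_\theta$ then $\theta = \theta^{nb}=\theta^b$, so $b\in B_\theta = B_\varphi$); and $\C_A(N)\subseteq Y\le B$ by hypothesis (i), which forces $\C_{A_\theta}(N)\le B_\varphi$ once we know $\C_A(N)$ fixes $\theta$ (it does, being inside $N$-centralizing and... actually $\C_A(N)$ need not fix $\theta$, but $\C_A(N)_\theta\le A_\theta$ and lies in $Y\cap B_\theta = Y_\varphi\le B_\varphi$ using (iii) again).

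The core construction uses Lemma~\ref{twexten} twice, in parallel, applied respectively to the chains $N \le X_\theta \unlhd A_\theta$ and $M\le Y_\varphi\unlhd B_\varphi$. For the first application I would take $L=N$, $\widetilde L = X_\theta$, $X = A_\theta$, $\psi = \theta$, $\widetilde\psi = \chi|_{?}$ — more precisely, hypothesis (iv) says $\theta$ extends to $X_\theta$, so pick an extension $\widetilde\theta\in\IBr(X_\theta)$, and the role of the complement $C$ in Lemma~\ref{twexten} is played by $H_\theta$: condition (ii) gives $X_\theta\cap H_\theta = (X\cap H)_\theta = N_\theta = N$, and (iii) gives $A_\theta = X_\theta H_\theta$, while (iv) provides that $\theta$ extends to $H_\theta$. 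Lemma~\ref{twexten} then yields a projective representation $\mathcal P$ of $A_\theta$ associated to $(A_\theta,N,\theta)$ whose factor set $\alpha$ is given on cosets by $\alpha(lcN, l'c'N)=\lambda_c^*(l')$ where $\lambda_c\in\IBr(X_\theta/N)$ is determined by $\widetilde\theta = \lambda_c\,\widetilde\theta^{\,c}$. Running the analogous argument with $M, Y_\varphi, B_\varphi, \varphi$ and complement $L_\varphi$ (using the $\varphi$-halves of (ii),(iii),(iv)) produces $\mathcal P'$ for $(B_\varphi,M,\varphi)$ with factor set $\alpha'(lcM,l'c'M)=\mu_c^*(l')$, where $\mu_c\in\IBr(Y_\varphi/M)$ satisfies $\widetilde\varphi = \mu_c\,\widetilde\varphi^{\,c}$.

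It then remains to check (ii1) $\alpha' = \alpha|_{B_\varphi\times B_\varphi}$ and (ii2) the scalar-matching on $\C_A(N)$. For (ii1), since $B_\varphi\le A_\theta$ and cosets of $B_\varphi$ modulo $M$ sit inside cosets of $A_\theta$ modulo $N$ via $L_\varphi\le H_\theta$, both factor sets are computed from the same data provided $\Res^{X_\theta}_{Y_\varphi}(\lambda_c) = \mu_c$ for $c\in L_\varphi$. This is exactly where the second bullet of hypothesis (v) enters: it says that for $l\in L_\varphi$, if $\chi = \lambda\chi^l$ for $\lambda\in\IBr(X/N)$ then $\psi = \Res^X_Y(\lambda)\psi^l$ — translating between the "big" characters $\chi,\psi$ and the extensions $\widetilde\theta,\widetilde\varphi$ (which one can arrange, since $\chi\in\IBr(X\mid\theta)$ restricts compatibly, possibly after using that $X/N$ is abelian to adjust extensions by linear characters) forces the two families $\{\lambda_c\}$ and $\{\mu_c\}$ to correspond under restriction. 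For (ii2): for $c\in\C_A(N)\subseteq Y_\varphi$, $\mathcal P(c)$ and $\mathcal P'(c)$ are scalar matrices (Schur's lemma), and the scalars are read off from $\chi$ and $\psi$ on $\C_A(N)$; the first bullet of (v), $\IBr(\C_A(N)\mid\chi)=\IBr(\C_A(N)\mid\psi)$, is designed to guarantee these scalars agree — concretely one identifies the scalar of $\mathcal P(c)$ with the value (up to the fixed root-of-unity normalization) determined by the linear constituent of $\chi$ on $\C_A(N)$, and likewise for $\psi$, and the equality of constituent sets pins them down.

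The main obstacle I anticipate is the bookkeeping in the previous paragraph: matching the abstractly-constructed factor sets $\alpha,\alpha'$ requires choosing the extensions $\widetilde\theta,\widetilde\varphi$ and the "global" characters $\chi,\psi$ coherently, and then carefully tracking how the linear characters $\lambda_c,\mu_c$ transform — the subtlety being that Lemma~\ref{twexten} only determines $\mathcal P$ up to the usual equivalence of projective representations, so one must check that hypothesis (v) survives these choices. Relatedly, one must be attentive that $\chi$ need not be an extension of $\theta$ (only $\chi\in\IBr(X\mid\theta)$), so the passage from $\chi$ to $\widetilde\theta$ goes through $\Res^X_{X_\theta}\chi$ and Clifford correspondence; keeping this consistent on both sides ($\chi\leftrightarrow\psi$) while respecting $\C_A(N)$ is the delicate point. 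Everything else is a routine application of Lemma~\ref{twexten} and the definitions.
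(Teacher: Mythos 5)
Your proposal is correct and follows essentially the same path as the paper's proof: apply Lemma~\ref{twexten} twice (to $N\le X_\theta\unlhd A_\theta$ with complement $H_\theta$, and to $M\le Y_\varphi\unlhd B_\varphi$ with complement $L_\varphi$), then use the second bullet of hypothesis~(v) to match the factor sets and the first bullet to match the scalars on $\C_A(N)$. Two small remarks: the momentary worry that ``$\C_A(N)$ need not fix $\theta$'' is unfounded --- elements of $\C_A(N)$ act trivially on $N$ by conjugation and hence fix every Brauer character of $N$, so $\C_A(N)\le A_\theta$ directly and the detour through $\C_A(N)_\theta$ is unnecessary; and the ``bookkeeping'' you flag as the delicate point is resolved in the paper by simply \emph{choosing} $\widetilde\theta\in\IBr(X_\theta)$ and $\widetilde\varphi\in\IBr(Y_\varphi)$ to be the Clifford correspondents of $\chi$ and $\psi$ respectively (i.e.\ $\Ind^X_{X_\theta}\widetilde\theta=\chi$, $\Ind^Y_{Y_\varphi}\widetilde\varphi=\psi$), after which the translation between the hypotheses in~(v) (stated for $\chi,\psi$) and the linear characters $\lambda_c,\mu_c$ (defined via $\widetilde\theta,\widetilde\varphi$) is a one-line computation: $\widetilde\theta=\lambda\widetilde\theta^l$ forces $\chi=\widetilde\lambda\chi^l$ for any extension $\widetilde\lambda$ of $\lambda$ to $X/N$, whence $\psi=\Res^X_Y(\widetilde\lambda)\psi^l$ by~(v) and so $\widetilde\varphi=\Res^{X_\theta}_{Y_\varphi}(\lambda)\widetilde\varphi^l$.
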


\centerline
{$\xymatrix{&&&A\ar@{-}[rdd]\ar@{-}[llldd]\ar@{-}[ddddd]&\\&&&&
		\\H\ar@{-}[ddddd]&&&&X\ar@{-}[ddddd]
		\\&&&&
		\\&&N\ar@{-}[rruu]\ar@{-}[lluu]\ar@{-}[ddddd]&&
		\\&&&B\ar@{-}[rdd]\ar@{-}[llldd]&\\&&&&
		\\H\cap B&&&&X\cap B
		\\&L\ar@{-}[lu]&&Y\ar@{-}[ru]&
		\\&&M\ar@{-}[ru]\ar@{-}[lu]&&
	\\&&&\mathrm{C}_A(N)\ar@{-}[uu]&}$}

\begin{proof}
First we have $A_\theta=B_\varphi N$ and $M=B_\varphi \cap N$ since $B_\varphi=B_\theta$.
Also 
 $Y_\varphi=Y_\theta$,  $L_\varphi=L_\theta$ and
$\C_{A_\theta}(N)\le B_\varphi$ and then condition (i) of Definition of~\ref{defn-centiso} holds.
Let $\widetilde\theta$ be the extension of $\theta$ to $X_\theta$ with $\Ind^X_{X_\theta}(\widetilde\theta)=\chi$ and
$\widetilde\varphi$ be the extension of $\varphi$ to $Y_\varphi$ with $\Ind^Y_{Y_\varphi}(\widetilde\varphi)=\psi$.
Then $$\IBr(\C_A(N)\mid\widetilde\theta)=\IBr(\C_A(N)\mid\chi)=\IBr(\C_A(N)\mid\psi)=\IBr(\C_A(N)\mid\widetilde\varphi)=\{ \epsilon\}$$
for some linear Brauer character $\epsilon\in\IBr(\C_A(N))$.
So we only need to prove the condition (ii1) of Definition~\ref{defn-centiso} now.

By Lemma~\ref{twexten} we have a projective representation $\mathcal P$ associated to $(A_\theta,N,\theta)$, such that the factor set $\alpha$ of $\mathcal P$ satisfies
(a) $\mathcal P(z)=\epsilon^*(z)I_{\theta(1)}$ for $z\in \C_A(N)$,
(b) $\alpha(x_1x_2,x_1'x_2')=\mu_{x_2}^*(x_1')$, for $x_1,x_1'\in X_\theta$, and $x_2,x_2'\in H_\theta$,
where for every $x_2\in H_\theta$, the linear Brauer character $\mu_{x_2}\in \IBr( X_\theta/N)$ is determined by
$\widetilde\theta=\mu_{x_2} \widetilde\theta^{x_2}$.

Analogously,
we have a projective representation $\mathcal P'$ associated to $(B_\varphi,M,\varphi)$,
such that the factor set $\alpha'$ of $\mathcal P'$ satisfies
(a) $\mathcal P'(z)=\epsilon^*(z)I_{\varphi(1)}$ for $z\in \C_A(N)$,
(b) $\alpha'(x_1x_2,x_1'x_2')=\mu_{x_2}'^*(x_1')$, for $x_1,x_1'\in Y_\varphi$, and $x_2,x_2'\in L_\varphi$,
where for every $x_2\in L_\varphi$, the linear Brauer character $\mu_{x_2}'\in \IBr( Y_\varphi/M)$ is determined by
$\widetilde\varphi=\mu_{x_2}' \widetilde\varphi^{x_2}$.

For some $l\in L_\varphi$, and $\lambda\in\IBr(X_\theta/N)$, we have $\widetilde\theta=\lambda\widetilde\theta^l$.
Then $\chi=\widetilde\lambda\chi^l$,
where $\widetilde\lambda\in\IBr(X/N)$ is some extension of $\lambda$.
By the hypothesis, $\psi=\Res^{X}_{Y}(\widetilde\lambda)\psi^l$.
So $\widetilde\varphi=
\Res^{X}_{Y_\varphi}(\widetilde\lambda)\widetilde\varphi^l$.
It is obvious that $\Res^{X}_{Y_\varphi}(\widetilde\lambda)=\Res^{X_\theta}_{Y_\varphi}(\lambda)$.
Hence $\widetilde\varphi=\Res^{X_\theta}_{Y_\varphi}(\lambda)\widetilde\varphi^l$, and
then $\Res^{X_\theta}_{Y_\varphi}(\mu_{x_2})=\mu_{x_2}'$. That is $\alpha|_{B_\varphi\times B_\varphi}=\alpha'$,
which completes the proof.
\end{proof}

\subsection{The inductive Brauer--Glauberman condition}

Using the definition above, the inductive Brauer--Glauberman condition from~\cite[Def.~6.1]{SV16} can be written in the following way. For the definition of \textit{ fake $m$-th Galois action},
see~\cite[\S 4]{SV16}.

\begin{defn}\label{iBGC}
	Let $S$ be a non-abelian finite simple group, $G$ the universal covering group of $S$ and $\ell$ a prime number dividing $|S|$.
	We say that $S$ satisfies the \emph{inductive Brauer--Glauberman (iBG) condition} for $\ell$~(or we say the \emph{inductive Brauer--Glauberman  condition} holds for $S$ and $\ell$)~ if for every $B\le \Aut(G)$, with $\gcd(|G|,|B|)=1$ the following conditions are satisfied:
	\begin{enumerate}[(I)]
		\item For $Z:=\ZZ(G)$, $\Gamma:=\C_{\Aut(G)}(B)$ and $C:=\C_G(B)$ there exists a $\Gamma$-equivariant bijection $$\Omega_B: \ \IBr_{B}(G)\to \IBr_{B}(CZ),$$ such that for every $\theta\in\IBr_{B}(G)$,
		\begin{equation}\label{equaiton-22}
			(G\rtimes\Gamma_\theta,G,\theta)\succ_{c} (CZ\rtimes\Gamma_\theta,CZ,\Omega_B(\theta)).
			\addtocounter{thm}{1}\tag{\thethm}
		\end{equation}
		\item For every non-negative integer $m$ with $(|G|,m)=1$, there exists a fake $m$-th Galois action on $\IBr(C)$ with respect to $C\rtimes \Gamma$.
	\end{enumerate}
\end{defn}

We say \emph{the (iBG) condition
	holds for simple group $S$} if the (iBG) condition holds for $S$ and any prime $\ell$ dividing $|S|$.

Due to the following theorem by the second author and Vallejo  there is considerable interest in
verifying the (iBG) condition for all finite non-abelian simple groups:

\begin{thm}[{\cite[Thm.~A]{SV16}}]\label{main-thm-SV16}
	Let $G$, $A $ be finite groups and let $\ell$ be a prime. Suppose that $A$ acts coprimely on $G$ via automorphisms.
	Suppose that all finite non-abelian simple groups involved in G satisfy the (iBG) condition for $\ell$.
	Then  Conjecture~\ref{conj-corr} is true for $G$, $A$ and the prime $\ell$.
\end{thm}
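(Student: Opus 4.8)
The final statement to prove is Theorem~\ref{main-thm-SV16}, namely that the (iBG) condition for the finite non-abelian simple groups involved in $G$ implies Navarro's Conjecture~\ref{conj-corr} for $G$, $A$ and the prime $\ell$. Since this is quoted from \cite{SV16}, I will not reprove it from scratch; instead I sketch the reduction-theorem strategy that such a proof follows.

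\medskip

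\noindent\textbf{Proof plan.} The plan is to argue by induction on $|G|$, using the structure of a minimal counterexample and the compatibility of the (iBG) condition with the standard Clifford-theoretic operations. First I would reduce to the case where $G$ has no proper nontrivial $A$-stable normal subgroup on which things behave well: if $N\unlhd G$ is $A$-stable with $1<N<G$, then by coprimality $\C_{G/N}(A)=\C_G(A)N/N$ (see \cite[Cor.~3.28]{Is08}) and $\C_N(A)=\C_G(A)\cap N$, so one can try to match $\IBr_A(G)$ with $\IBr_A(\C_G(A))$ by going through $\IBr_A(N)$ and $\IBr_A(G/N)$ via Glauberman correspondences on the pieces (using Proposition~\ref{rela-G-CN} and Corollary~\ref{rela-abel} in the abelian layers). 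The point of the (iBG) condition---in particular the central-isomorphism requirement~\eqref{equaiton-22} and the fake Galois action in part (II)---is precisely to make these partial correspondences glue together compatibly across a chief series. This forces, in a minimal counterexample, that $G$ is a central product of quasisimple groups permuted by $A$ (a ``characteristically simple modulo center'' situation), so that $G/\ZZ(G)\cong S^k$ for a non-abelian simple $S$ involved in $G$.

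\medskip

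\noindent\textbf{Handling the quasisimple layer.} Next I would treat $G$ of the form just described. Using Lemma~\ref{ext-central-prod} and Proposition~\ref{central-extension}, the Brauer characters of a central product of the quasisimple factors decompose as products of Brauer characters of the factors, and the $A$-action permutes the factors; a further coprime-action argument reduces the count $|\IBr_A(G)|=|\IBr_A(\C_G(G's))|$-type identity to the single-factor case, i.e. to a quasisimple group $L$ with $L/\ZZ(L)\cong S$. Here one invokes exactly the (iBG) condition for $S$ and $\ell$: part (I) furnishes a $\C_{\Aut(G)}(B)$-equivariant bijection $\Omega_B\colon \IBr_B(G)\to\IBr_B(CZ)$ satisfying the central-isomorphism condition, which is what allows the correspondence to descend from the universal covering group to the particular central extension $L$ appearing in $G$, and part (II) supplies the fake Galois action needed to control how the diagonal automorphisms and the action of $A$ interact on $\IBr(C)$. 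Cardinality of $A$-fixed points on both sides then follows by a Burnside/fixed-point count over the $A$-orbits of factors, using that the bijections are equivariant for the relevant normalizer.

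\medskip

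\noindent\textbf{Main obstacle.} The hard part is the bookkeeping that makes the local correspondences on successive chief factors composable: one must verify that the Glauberman/Clifford bijection chosen on a normal section does not just exist but is canonical enough (equivariant under the appropriate automorphism group, compatible with linear characters of the abelian quotients, and compatible with central characters) to be assembled along the whole chief series without obstruction. This is exactly where the \emph{central} isomorphism of modular character triples and the fake Galois action enter, and verifying their stability under the inductive step---passing from $G$ to $G/N$ and back, and from the universal cover to an intermediate cover---is the technical heart of the argument. Everything else (the coprime-action facts, the transitivity of $\C_G(A)$ on $A$-invariant constituents, the multiplicity-free reductions) is routine given Proposition~\ref{rela-G-CN}, Corollary~\ref{rela-abel}, Lemma~\ref{ext-central-prod} and Proposition~\ref{central-extension}. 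Since the full argument is carried out in \cite{SV16}, I refer there for the details.
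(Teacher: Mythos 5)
Since the paper cites Theorem~\ref{main-thm-SV16} directly from \cite[Thm.~A]{SV16} without reproving it, there is no internal proof to compare against; your choice to defer to \cite{SV16} while sketching the reduction-theorem strategy is the right call. Your sketch is faithful to the actual architecture of that argument: induction on $|G|$, cutting along $A$-stable normal sections using the coprime-action facts (Proposition~\ref{rela-G-CN}, Corollary~\ref{rela-abel}), reducing a minimal counterexample to a central product of quasisimple factors permuted by $A$, and invoking the (iBG) data---the central isomorphism~\eqref{equaiton-22} to descend from the universal cover to the actual quasisimple layer, and the fake Galois action of part (II) to control diagonal automorphisms---as the glue that makes the partial Glauberman/Clifford bijections compose along a chief series. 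You also correctly locate the technical heart (the equivariance and compatibility-with-central-characters conditions that make the local correspondences canonical enough to assemble). This matches the structure of \cite{SV16}, so no gap to flag.
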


By~\cite[Rmk.~6.2]{SV16}, for any simple group $S$, the  (iBG) condition in Definition~\ref{iBGC} is equivalent to its weaker version, in which $B$ is taken to be the trivial group in condition (II) of  Definition~\ref{iBGC}.
Recenly,  Farrell and Ruhstorfer proved the condition (II) of the  (iBG) condition and we state this as follows. 

\begin{thm}[{\cite[Thm.~A]{FR21}}]\label{fake-g-act}
	Let $S$ be a non-abelian simple group and let $G$ be the universal covering group of $S$. Then
	for all non-negative integers m such that  $\gcd(|G|,m)=1$, there exists a fake $m$-th Galois action on $\IBr(G)$
	with respect to $G\unlhd  G \rtimes \Aut(G)$.
\end{thm}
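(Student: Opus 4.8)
The plan is to reduce, via the classification of finite simple groups, to verifying the existence of a fake $m$-th Galois action (in the sense of \cite[\S4]{SV16}: a bijection $\mathfrak{f}$ of $\IBr(G)$ that is $\Aut(G)$-equivariant and such that for each $\chi\in\IBr(G)$ the modular character triple $(G\rtimes\Aut(G)_\chi,G,\chi)$ is centrally isomorphic, after the cyclotomic Galois twist $\sigma_m\colon\zeta\mapsto\zeta^{m}$, to $(G\rtimes\Aut(G)_{\mathfrak{f}(\chi)},G,\mathfrak{f}(\chi))$) one family of universal covering groups $G$ at a time. For $S$ sporadic, the Tits group, or alternating, the quotient $\Aut(G)/\mathrm{Inn}(G)$ is cyclic apart from finitely many exceptions; when it is cyclic the factor sets attached to it are trivial and Brauer's permutation lemma applied to the $\ell$-modular character table produces $\mathfrak{f}$ directly, and the remaining short list is settled from the known decomposition matrices and the combinatorial labelling of $\IBr(G)$ together with its explicit Galois action. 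This leaves $G=\bG^F$ with $\bG$ simple simply connected, $F$ a Steinberg endomorphism defining an $\F_q$-structure, $q$ a power of the defining prime $p$; note $\gcd(m,|G|)=1$ forces $p\nmid m$.

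In defining characteristic $\ell=p$ I would use Steinberg's tensor product theorem: $\IBr(G)$ is parametrised (up to twists by $\ZZ(G)$) by $p$-restricted dominant weights, $\Aut(G)$ acts on this set through diagonal automorphisms, Dynkin-diagram graph automorphisms, and field automorphisms, and $\sigma_m$ acts through an explicit combinatorial operation on weights compatible with these. One then reads off $\mathfrak{f}$ — typically a correction of a power of a field automorphism — and checks the factor-set compatibility with the $\sigma_m$-twist using the projective representations furnished by the ambient algebraic group; the Suzuki and Ree groups need the analogous bookkeeping with the twisted root datum.

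The heart of the matter is $\ell\ne p$. The strategy is to pass to a regular embedding $\bG\hookrightarrow\tbG$ with $\tG=\tbG^F$, to use the Jordan decomposition of Brauer characters (Bonnaf\'e--Rouquier, and block-theoretically \cite{BDR17}) to describe $\IBr(G)$ one $\ell$-block at a time in terms of unipotent Brauer characters of the relevant centraliser subgroups of $\tbG^{*F}$ attached to semisimple $\ell'$-elements $s$, and to track the labelling $(s,\textrm{unipotent data})$ under $\sigma_m$: on the semisimple part $\sigma_m$ acts by the power map $s\mapsto s^{m}$ (read modulo the order of $s$), since Galois conjugation on characters of a torus is raising torus elements to a power; on the unipotent part it acts by the ``almost trivial'' action reflecting the good Galois behaviour of ordinary unipotent characters — these are permuted among themselves by $\sigma_m$ because the virtual characters $R_{\bT}^{\bG}(1)$ are rational, and the unitriangular unipotent basic set of \cite{BDT20} transports this action faithfully to $\IBr$. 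From this one assembles $\mathfrak{f}$ block by block (the regular embedding $\tG$ being used to absorb the diagonal automorphisms and to make the construction $\Aut(G)$-equivariant, coprimality of $m$ with $|G|$ being what keeps the power map $s\mapsto s^{m}$ invertible at the relevant level), and one verifies the character-triple compatibility on $\tG$, where — because every element of $\Irr(G)\cup\IBr(G)$ extends to its $\tG$-stabiliser and $\Res^{\tG}_G$ is multiplicity-free — the relevant projective representations and factor sets are computed by Lemma~\ref{twexten}/Proposition~\ref{twostepext}-type arguments, the $\sigma_m$-twist being matched via transport of structure along the Morita equivalences.

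The step I expect to be the main obstacle is exactly this coherence: although $\sigma_m$ itself is the natural candidate for $\mathfrak{f}$, verifying the character-triple condition with the $\sigma_m$-twist is not formal, and it forces one to control the interaction of the Bonnaf\'e--Dat--Rouquier Morita equivalences with automorphisms \emph{and} with Galois conjugation simultaneously, and to glue the block-by-block data into one $\Aut(G)$-equivariant bijection of all of $\IBr(G)$. This in turn requires handling the families where the needed equivariant Jordan decomposition of Brauer characters, or the precise structure of $\Aut(G)/\mathrm{Inn}(G)$ (small rank, triality in type $\mathsf{D}_4$, the primes $2$ and $3$), is not available off the shelf and has to be supplied by direct character-theoretic analysis.
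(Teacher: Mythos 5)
The statement you are trying to prove is not proved in this paper at all: it is Theorem~A of Farrell--Ruhstorfer \cite{FR21}, quoted verbatim here with that citation as its label and no proof given. The authors simply record it as a black box (``Recently, Farrell and Ruhstorfer proved the condition~(II) of the (iBG) condition and we state this as follows'') so that, together with the central-isomorphism part of Definition~\ref{iBGC}, it can be fed into Theorem~\ref{criforlie}. There is therefore no ``paper's own proof'' to compare your sketch against.

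As a reconstruction of \cite{FR21} your sketch is plausible in outline but stops well short of a proof, and you acknowledge this yourself: the Lie-type, non-defining-characteristic case is handled only at the level of ``the Jordan decomposition labels should transform by the power map on the semisimple part and trivially on unipotent data'', with the central-isomorphism verification after the $\sigma_m$-twist left as the ``main obstacle'' and the bad small-rank/triality/exceptional-covering configurations explicitly deferred. Concretely, the gap is that nothing in the sketch establishes the factor-set compatibility of Definition~\ref{defn-centiso} across the Galois twist --- you gesture at Lemma~\ref{twexten}/Proposition~\ref{twostepext} but those as stated compare two genuinely existing modular character triples, not a triple with its $\sigma_m$-conjugate, so one would need the precise definition and machinery of fake Galois action from \cite[\S4]{SV16} (not reproduced here) and the equivariant Jordan decomposition results that \cite{FR21} actually develop. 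In the present paper none of this is needed: the theorem is imported, and the original contribution is instead the unitriangularity results (Theorem~\ref{TriangUp} and its consequences) and the criterion Theorem~\ref{criforlie}, which reduce the (iBG) condition to Hypothesis~\ref{unitri} given Theorem~\ref{fake-g-act} as input.
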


From this, 
when considering the  (iBG) condition,
we only need to establish the Definition~\ref{iBGC} (I), a central isomorphism between modular character triples.

\begin{cor}\label{exceptional-covering}
	Let $S$ be a non-abelian simple group and let $G$ be the universal covering group of $S$. 
	If $G$ is an exceptional covering group of a simple group of Lie type, then $S$ satisfies the (iBG) condition.
\end{cor}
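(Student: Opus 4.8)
The plan is to reduce the statement to condition~(I) of Definition~\ref{iBGC} and then to use that an exceptional covering group has almost no automorphisms of order coprime to its own order.

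By \cite[Rmk.~6.2]{SV16}, condition~(II) of Definition~\ref{iBGC} is equivalent to its special case $B=1$, which is exactly Theorem~\ref{fake-g-act}; so condition~(II) holds for $G$ and every prime $\ell$ dividing $|S|$. It therefore suffices to verify condition~(I) for every such $\ell$ and every $B\le\Aut(G)$ with $\gcd(|B|,|G|)=1$. Since $G$ is a perfect central extension of $S$, one has $\mathrm{Inn}(G)\cong S$, and because $G$ is perfect no non-trivial automorphism of $G$ is trivial on $G/\ZZ(G)$, so $\Aut(G)/\mathrm{Inn}(G)$ embeds into $\mathrm{Out}(S)$; as $|S|$ divides $|G|$, such a $B$ meets $\mathrm{Inn}(G)$ trivially and hence embeds into $\mathrm{Out}(S)$.

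Next I would run through the finite list of simple groups $S$ of Lie type with exceptional Schur multiplier together with the structure of $\mathrm{Out}(S)$, read off from the standard tables of Schur multipliers and outer automorphism groups. When $\mathrm{Out}(S)$ is cyclic --- which happens for $\PSL_2(4)$, $\PSL_3(2)$, $\PSL_4(2)$, $\PSU_4(2)$, $\Sp_6(2)$, $\mathrm{O}_7(3)$, $G_2(3)$, $G_2(4)$, $F_4(2)$ and $\mathrm{Sz}(8)$ --- the (iBG) condition for $S$ holds by the work of Navarro, the second author and Tiep \cite{NST17} on simple groups with cyclic outer automorphism group, and nothing further is needed. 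In the remaining cases, namely $A_6\cong\PSL_2(9)$, $\PSL_3(4)$, $\PSU_4(3)$, $\PSU_6(2)$, $\mathrm{O}_8^+(2)$ and ${}^2E_6(2)$, the group $\mathrm{Out}(S)$ is non-cyclic of order involving only the primes $2$ and $3$, both of which divide $|S|$; hence the only $B\le\Aut(G)$ with $\gcd(|B|,|G|)=1$ is the trivial group.

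It then remains to observe that condition~(I) is automatic for $B=1$: here $C=\C_G(B)=G$ and $CZ=G$ (with $Z=\ZZ(G)$), so one may take $\Omega_B=\operatorname{id}_{\IBr(G)}$, which is $\Gamma$-equivariant for $\Gamma=\C_{\Aut(G)}(1)=\Aut(G)$, and for each $\theta\in\IBr(G)$ the relation $(G\rtimes\Gamma_\theta,G,\theta)\succ_c(G\rtimes\Gamma_\theta,G,\theta)$ holds by taking the same projective representation for both triples in Definition~\ref{defn-centiso}. Combined with condition~(II), this settles the non-cyclic cases and hence the corollary. The only real work is the case analysis in the previous paragraph, i.e.\ confirming that apart from $\mathrm{Sz}(8)$ (where $\mathrm{Out}(\mathrm{Sz}(8))\cong\mathrm{C}_3$ is still cyclic and so covered by \cite{NST17}) an exceptional covering group of a simple group of Lie type admits no non-trivial automorphism of order coprime to its order.
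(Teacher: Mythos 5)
Your proposal is correct and in fact \emph{more careful} than the paper's own proof, which is quite terse: the paper reduces to condition~(I) via Theorem~\ref{fake-g-act} as you do, and then simply asserts that ``case by case calculations show that the set of primes dividing $|G|$ and $|\Aut(G)|$ are the same,'' so that $B$ is forced to be trivial. That global assertion is actually false as stated for $G$ the exceptional cover of $\mathrm{Sz}(8)$: one has $|\mathrm{Sz}(8)|=2^6\cdot5\cdot7\cdot13$, hence $3\nmid|G|$, while $\mathrm{Out}(\mathrm{Sz}(8))\cong\mathrm{C}_3$ lifts to $\Aut(G)$, so $3$ does divide $|\Aut(G)|$. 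Your route circumvents this by splitting the list from \cite[Table~6.1.3]{GLS98} into the cases where $\mathrm{Out}(S)$ is cyclic (handled outright by the cyclic-outer result of \cite{NST17}, which covers $\mathrm{Sz}(8)$) and the non-cyclic cases $A_6$, $\PSL_3(4)$, $\PSU_4(3)$, $\PSU_6(2)$, $\mathrm{O}_8^+(2)$, ${}^2E_6(2)$, where $|\mathrm{Out}(S)|$ is a $\{2,3\}$-number with both $2$ and $3$ dividing $|S|$, forcing $B=1$; your observation that condition~(I) is vacuous for $B=1$ (take $\Omega_1=\mathrm{id}$ and the same projective representation on both sides of~(\ref{equaiton-22})) then closes the argument. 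The trade-off: the paper's proof is one sentence shorter but relies on a uniform coprimality claim that needs the $\mathrm{Sz}(8)$ exception acknowledged; your dichotomy makes the case analysis explicit and correct at the cost of an appeal to the cyclic-$\mathrm{Out}$ theorem from \cite{NST17}, which the paper in any case already uses elsewhere (e.g.\ in the proof of Theorem~\ref{main-thm1}). Both proofs are essentially elementary once Theorem~\ref{fake-g-act} is invoked, so the difference is one of bookkeeping rather than depth.
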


\begin{proof}
	By the assumption, $S$ is one of the simple groups of Lie type in~\cite[Table~6.1.3]{GLS98}.
	Case by case calculations show that
	the set of primes dividing $|G|$ and $|\Aut(G)|$ are the same.
	Then the group $B$ in Definition~\ref{iBGC} should be trivial and thus condition (I) holds.
	Therefore, $S$ satisfies the (iBG) condition by Theorem~\ref{fake-g-act}.
\end{proof}

\subsection{A criterion for the inductive Brauer--Glauberman condition}
\label{sec:cri-iBGC}

We will state a technical result which may be useful for the verification of  the  (iBG)  condition for simple groups of Lie type.
It is a generalisation of~\cite[Prop.~5.9]{NST17}.

\begin{thm}\label{criforlie}
Let $S$ be a finite non-abelian simple group, $G$ the universal covering group of $S$ and $\ell$ a prime number dividing $|S|$.
Suppose that there exists a finite group $\widetilde G$ with $G\unlhd \widetilde G$ and a finite group $D$ such that $\widetilde G\rtimes D$ is defined, $G$ is $D$-stable, $\widetilde G/G$ is abelian, $\C_{\widetilde G\rtimes D}(G)=\ZZ(\widetilde G)$,  and $\widetilde G\rtimes D$ induces on $G$ all automorphisms of $G$.
Assume that for every $B\le D$ with $\gcd(|G|,|B|)=1$,
and $C:=\C_G(B)$, $\widetilde C:=\C_{\widetilde G}(B)$, the following conditions are satisfied.
\begin{enumerate}[\rm(i)]
\item 
   \begin{enumerate}[\rm(a)]
   	\item $B\le \ZZ(D)$, $\gcd(|\tG|,|B|)=1$,
    \item every $\chi\in\IBr(G)$ extends to its stabiliser $\widetilde G_\chi$, and
   \item every $\psi\in\IBr(C)$ extends to its stabiliser $\widetilde C_\psi$.
\end{enumerate}
\item      For every $\widetilde\chi\in\IBr_B(\widetilde G)$ there exists some $\chi_0\in \IBr(G\mid\widetilde\chi)$ such that
    \begin{enumerate}[\rm(a)]    	
     \item $(\widetilde G\rtimes D)_{\chi_0}=\widetilde G_{\chi_0}\rtimes D_{\chi_0}$, and
    \item $\chi_0$ extends to $G\rtimes D_{\chi_0}$.    
    \end{enumerate}
\item  For every $\widetilde\psi\in\IBr_B(\widetilde C)$ there exists some $\psi_0\in \IBr(C\mid\widetilde\psi)$ such that
    \begin{enumerate}[\rm(a)]
    \item $(\widetilde C\rtimes D)_{\psi_0}=\widetilde C_{\psi_0}\rtimes D_{\psi_0}$, and
    \item $\psi_0$ extends to $C\rtimes D_{\psi_0}$.
      \end{enumerate}
\item  There exists a $D$-equivariant bijection $\widetilde\Omega_B: \IBr_B(\widetilde G)\to \IBr(\widetilde C)$ between $\IBr_B(\widetilde G)$ and $\IBr(\widetilde C)$ with
    \begin{enumerate}[\rm(a)]
    \item $\tC G\cap \J^{\tG}_{G}(\widetilde\chi)=G. \J^{\tC}_{C}(\widetilde{\Omega}_B(\widetilde\chi))$ (central product)
 for every $\widetilde\chi\in\IBr_B(\widetilde G)$,	
    \item $\widetilde\Omega_B( \IBr_B(\widetilde G\mid\nu))=\IBr_B(\widetilde C\mid\nu)$ for every $\nu\in\IBr(\ZZ(\widetilde G)\cap \tC)$, and
    \item $\widetilde\Omega_B(\widetilde\chi\lambda)=\widetilde\Omega_B(\widetilde\chi)\Res^{\tG}_{\tC}(\lambda)$ for every $\widetilde\chi\in\IBr_B(\widetilde G)$ and $\lambda\in\IBr_B(\widetilde G/G)$.
    \end{enumerate}
\end{enumerate}
Then the  (iBG)  condition holds for $S$ and $\ell$.
\end{thm}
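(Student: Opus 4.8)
The plan is as follows. By~\cite[Rmk.~6.2]{SV16}, condition~(II) of Definition~\ref{iBGC} only needs to be checked for $B=1$, where it is exactly the statement of Theorem~\ref{fake-g-act}; so I focus on condition~(I), for every $B\le\Aut(G)$ with $\gcd(|G|,|B|)=1$. First I would reduce to $B\le D$: the map $\widetilde G\rtimes D\to\Aut(G)$ is surjective with kernel $\C_{\widetilde G\rtimes D}(G)=\ZZ(\widetilde G)$, so $\Aut(G)=\overline{\widetilde G}\rtimes\overline D$ with $\overline{\widetilde G}=\widetilde G/\ZZ(\widetilde G)$ of order dividing $|G|$ (it is an extension of $\mathrm{Inn}(G)$ by a group of diagonal automorphisms, whose order divides $|\ZZ(G)|$); a Schur--Zassenhaus argument then conjugates $B$ into $\overline D$, and since condition~(I) depends only on the $\Aut(G)$-class of $B$ we may assume $B\le D$. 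Hypothesis~(i)(a) then gives $B\le\ZZ(D)$ and $\gcd(|\widetilde G|,|B|)=1$. Set $C=\C_G(B)$, $\widetilde C=\C_{\widetilde G}(B)$, $Z=\ZZ(G)$, $\widetilde Z=\ZZ(\widetilde G)$; using $B\le\ZZ(D)$ and $\gcd(|B|,|\widetilde Z|)=1$ one checks that $\Gamma:=\C_{\Aut(G)}(B)$ is the image of $\widetilde C\rtimes D$ in $\Aut(G)$ and that $\widetilde Z$ acts trivially on $G$, so it suffices to make all constructions $(\widetilde C\rtimes D)$-equivariant.

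Next I would build the bijection $\Omega_B\colon\IBr_B(G)\to\IBr_B(CZ)$ by descending the datum $\widetilde{\Omega}_B$ of hypothesis~(iv). By hypothesis~(i)(b) and the observation recalled in \S\ref{Preliminiaries}, $\Res^{\widetilde G}_G$ is multiplicity-free, so Proposition~\ref{rela-G-CN} (applied to the coprime action of $B$ on $\widetilde G$ with normal subgroup $G$) shows that each $\theta\in\IBr_B(G)$ lies under a nonempty set $\IBr_B(\widetilde G\mid\theta)$ on which $\IBr_B(\widetilde G/G)$ acts transitively, and that $\widetilde C$ acts transitively on $\IBr_B(G\mid\widetilde\chi)$ for every $\widetilde\chi\in\IBr_B(\widetilde G)$. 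On the other side, Proposition~\ref{central-extension} applied to $CZ$ (with $\C_{CZ}(B)=C$) gives a $\Gamma$-equivariant bijection $\Res^{CZ}_C\colon\IBr_B(CZ)\to\IBr(C)$. I would then set $\Omega_B(\theta)$ to be the $B$-invariant character of $CZ$ lying over a chosen constituent $\varphi\in\IBr(C\mid\widetilde{\Omega}_B(\widetilde\theta))$ and over the linear character of $Z$ prescribed, via hypothesis~(iv)(b), by $\theta$, where $\widetilde\theta\in\IBr_B(\widetilde G\mid\theta)$ is arbitrary. Independence of $\widetilde\theta$ is exactly what hypotheses~(iv)(c) (the ambiguity being a twist by $\IBr_B(\widetilde G/G)$, carried by $\widetilde{\Omega}_B$ to a twist by $\IBr_B(\widetilde C/C)$) and~(iv)(b) (fixing the central character on $\ZZ(\widetilde G)\cap\widetilde C$, hence on $Z$) secure; independence of $\varphi$ up to $\widetilde C$-conjugacy is the transitivity above; and bijectivity follows by matching the stabiliser of $\theta$ in $\widetilde C G$ with that of $\varphi$ in $\widetilde C$, which is hypothesis~(iv)(a) inserted into the stabiliser formula~\eqref{sta-ibr}. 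Equivariance of $\Omega_B$ under $\widetilde C\rtimes D$, hence under $\Gamma$, is inherited from the $D$-equivariance of $\widetilde{\Omega}_B$ and from the equivariance in Propositions~\ref{rela-G-CN} and~\ref{central-extension}.

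It then remains to verify the central isomorphism $(G\rtimes\Gamma_\theta,G,\theta)\succ_{c}(CZ\rtimes\Gamma_\theta,CZ,\Omega_B(\theta))$ in~\eqref{equaiton-22} for each $\theta\in\IBr_B(G)$, which I would obtain from a single application of Proposition~\ref{twostepext}. Taking its ambient group to be $G\rtimes\Gamma_\theta$ with bottom normal subgroup $N=G$, its ``$B$''$=CZ\rtimes\Gamma_\theta$ (so $M=CZ$ and $\C_A(N)=Z$), its $X$ the subgroup generated by $G$ and the diagonal automorphisms in $\Gamma_\theta$ coming from $\widetilde C$, $Y=X\cap(CZ\rtimes\Gamma_\theta)$, and $H,L$ carrying the field/graph part $D_\theta$ as in hypotheses~(ii) and~(iii): its condition~(i) is the group-theoretic set-up ($\widetilde G/G$ abelian, $\C_{\widetilde G\rtimes D}(G)=\widetilde Z$); its condition~(ii) provides $H,L$; its condition~(iii), the factorisations $A_\theta=X_\theta H_\theta$ and $B_\varphi=Y_\varphi L_\varphi$, follows via Clifford theory from hypotheses~(ii)(a) and~(iii)(a) applied to the $\theta$-related characters $\chi_0,\psi_0$; its condition~(iv) is precisely the extendibility in hypotheses~(i)(b),(i)(c),(ii)(b),(iii)(b); and its condition~(v), furnishing $\chi\in\IBr(X\mid\theta)$ and $\psi\in\IBr(Y\mid\varphi)$ with equal central characters on $\C_A(N)$ and compatible $\lambda$-twists, comes from the construction of $\Omega_B$ together with hypotheses~(iv)(b) and~(iv)(a). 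Proposition~\ref{twostepext} then yields~\eqref{equaiton-22}, completing the proof of the (iBG) condition for $S$ and $\ell$.

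The hard part, I expect, is the interface between the last two steps: making the descent of $\widetilde{\Omega}_B$ simultaneously well-defined and bijective while tracking all the central characters (on $Z$, on $\widetilde Z$, on $Z\cap\widetilde C$) and all the $\J$-subgroups at once, and then getting the ``two-step'' geometry---diagonal automorphisms from $\widetilde G$ as one layer, field and graph automorphisms from $D$ as the other---to line up exactly with the hypotheses of Proposition~\ref{twostepext}, so that its stabiliser factorisations in~(iii) and its twist compatibility in~(v) really fall out of our hypotheses~(ii)--(iv) and the construction of $\Omega_B$.
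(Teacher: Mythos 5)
Your overall plan is on track, and your construction of $\Omega_B$ by descending $\widetilde\Omega_B$ via Propositions~\ref{rela-G-CN} and~\ref{central-extension} is essentially what the paper does. However, the final step has a genuine gap that the paper avoids by a transfer step you omit.

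You propose to apply Proposition~\ref{twostepext} directly with ambient group $A=G\rtimes\Gamma_\theta$, taking $X$ to be ``the subgroup generated by $G$ and the diagonal automorphisms in $\Gamma_\theta$ coming from $\widetilde C$.'' That choice violates hypothesis~(i) of Proposition~\ref{twostepext}, which requires $X/N$ to be abelian. Indeed for any $c\in C=\C_G(B)$ the inner automorphism $\mathrm{conj}_c$ lies in $\Gamma=\C_{\Aut(G)}(B)$, stabilizes $\theta$ (since $\theta$ is $G$-invariant), and is induced by $\widetilde C$; hence $\mathrm{Inn}(C)\le \Gamma_\theta\cap\mathrm{img}(\widetilde C)\cong X/G$. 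Since $C$ is typically non-abelian (e.g.\ $C\cong\bG^{F_1}$ in the Lie-type case), so is $\mathrm{Inn}(C)$, and $X/N$ fails to be abelian. Restricting $X$ to ``outer'' diagonal automorphisms does not help either, because then $X$ would not contain the inner automorphisms it needs to for $Y=X\cap(CZ\rtimes\Gamma_\theta)$ and the rest of the diagram to be coherent.

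The paper sidesteps this by working inside $\tG\rtimes D$ rather than $G\rtimes\Aut(G)$: it applies Proposition~\ref{twostepext} with $A=\tC G\rtimes D$, $N=G$, $X=\tC G$, $Y=\tC Z$, $H=G\rtimes D$, $L=CZ\rtimes D$. There $X/N=\tC G/G\cong\tC/C$ is a subquotient of $\tG/G$, hence abelian, so the hypotheses genuinely hold. This produces the central isomorphism $\bigl((\tC G\rtimes D)_\chi,G,\chi\bigr)\succ_c\bigl((\tC Z\rtimes D)_\chi,CZ,\Omega_B(\chi)\bigr)$, and then the ``butterfly'' result \cite[Thm.~3.7]{SV16} (or \cite[Thm.~3.5]{Sp17}) transports this to the required central isomorphism $(G\rtimes\Gamma_\theta,G,\theta)\succ_c(CZ\rtimes\Gamma_\theta,CZ,\Omega_B(\theta))$ of Definition~\ref{iBGC}(I). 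That transfer step is exactly what is missing from your argument, and without it (or some substitute) your application of Proposition~\ref{twostepext} does not go through.
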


\begin{proof}
Thanks to Theorem~\ref{fake-g-act}, it suffices to show condition (I) of Definition~\ref{iBGC}.	
First by~\cite[Cor.~3.28]{Is08},	
we have $\C_{\Aut(G)}(B)=\C_{\tG\rtimes D/\ZZ(\tG)}(B)
= (\tC \rtimes D)\ZZ(\tG)/\ZZ(\tG)$, which implies that $\tC \rtimes D$ induces on $G$ all $\C_{\Aut(G)}(B)$.		
Let $Z:=\ZZ(G)$.

\centerline
{$\xymatrix{&&&\tC G\rtimes D\ar@{-}[rdd]\ar@{-}[llldd]\ar@{-}[ddddd]&\\&&&&
		\\G\rtimes D\ar@{-}[ddddd]&&&&\tC G\ar@{-}[ddddd]
		\\&&&&
		\\&&G\ar@{-}[rruu]\ar@{-}[lluu]\ar@{-}[ddddd]&&
		\\&&&\tC Z\rtimes D\ar@{-}[rdd]\ar@{-}[llldd]&\\&&&&
		\\CZ\rtimes D&&&&\tC Z
		\\&&&&
		\\&&CZ\ar@{-}[rruu]\ar@{-}[lluu]&&}$}
	
	\medskip
	
By~\cite[Thm.~3.7]{SV16} or~\cite[Thm.~3.5]{Sp17},
it now suffices to prove that for every $B\le D$ with $(|G|,|B|)=1$,
there exists a $(\tC\rtimes D)$-equivariant bijection $\Omega_B: \ \IBr_{B}(G)\to \IBr_{B}(CZ),$ such that
\begin{equation}\label{equa-1}
((\tC G\rtimes D)_\chi,G,\chi)\succ_{c} ((\tC Z\rtimes D)_\chi,CZ,\Omega_B(\chi))
\addtocounter{thm}{1}\tag{\thethm}
\end{equation}
for every $\chi\in\IBr_{B}(G)$.

By Corollary~\ref{rela-abel}, we know that
the restriction  $$\Res^{(\ZZ(\tG)\cap \tC)Z}_{\ZZ(\tG)\cap \tC}:\IBr_B((\ZZ(\tG)\cap \tC)Z) \to \IBr(\ZZ(\tG)\cap \tC)$$ is bijective.
According to Proposition~\ref{central-extension}, we also have a $D$-equivariant bijection
$$\Res^{\tC Z}_{\tC}:\IBr_B(\tC Z)\to \IBr(\tC).$$
Furthermore,
\begin{itemize}
	\item  $\J^{\tC Z}_{CZ}(\widetilde\psi)= \J^{\tC}_C(\Res^{\tC Z}_{\tC}(\widetilde\psi)). Z$ (central product)  for every $\widetilde\psi\in\IBr_B(\widetilde C Z)$, and
	\item $\Res^{\tC Z}_{\tC}(\IBr_B(\tC Z\mid \nu))=\IBr(\tC\mid\Res^{(\ZZ(\tG)\cap \tC)Z}_{\ZZ(\tG)\cap \tC}(\nu))$ for every $\nu\in\IBr_B((\ZZ(\tG)\cap \tC)Z)$.
\end{itemize}
Using again Proposition~\ref{central-extension}, we know from (iii) that
for every $\widetilde\psi\in\IBr_B(\widetilde CZ)$ there exists some $\psi_0\in \IBr(CZ\mid\psi)$ such that
 $(\widetilde CZ\rtimes D)_{\psi_0}=(\widetilde CZ)_{\psi_0}\rtimes D_{\psi_0}$ and
 $\psi_0$ extends to $CZ\rtimes D_{\psi_0}$.

By Proposition~\ref{rela-G-CN} (iv), we have a bijection $\widetilde\Omega''_B$ between $\IBr_B(\tG)$ and $\IBr_B(\tC G)$ which is automatically $D$-equivariant. 
Also from the construction of $\widetilde\Omega''_B$, we have 
\begin{itemize}
\item $\widetilde\Omega''_B( \IBr_B(\widetilde G\mid\nu))=\IBr_B(\widetilde C G\mid\nu)$ for every $\nu\in\IBr(\ZZ(\widetilde C G))$, and 
\item  $\widetilde\Omega''_B(\widetilde\chi\lambda)=\widetilde\Omega''_B(\widetilde\chi)\Res^{\tG}_{\tC G}(\lambda)$ for every $\widetilde\chi\in\IBr_B(\widetilde G)$, and $\lambda\in\IBr_B(\widetilde G/G)$.
\end{itemize}
In addition, it is easy to check that
$\J^{\tC G}_G(\widetilde\Omega''_B(\widetilde\chi))=\tC G\cap \J^{\tG}_G(\widetilde\chi)$  for every $\widetilde\chi\in\IBr_B(\widetilde G)$.

By~\cite[Cor.~3.28]{Is08},  $\C_{\tG/G}(B)=\tC G/G$.
Then applying Corollary~\ref{rela-abel}, we know that $$\Res^{\tG/G}_{\tC G/G}:\IBr_B(\tG/G)\to \IBr(\tC G/G)$$ is bijective.
By (iv) we obtain a $D$-equivariant bijection $\widetilde\Omega'_B: \ \IBr_B(\widetilde CG)\to \IBr_B(\widetilde CZ)$  with
\begin{itemize}
\item $\J^{\tC G}_{G}(\widetilde\chi)=G. \J^{\tC Z}_{CZ} (\widetilde\Omega'_B(\widetilde\chi))$ (central product) for every $\widetilde\chi\in\IBr_B(\widetilde CG)$,
   \item $\widetilde\Omega'_B( \IBr_B(\widetilde CG\mid\nu))=\IBr_B(\widetilde CZ\mid\nu)$ for every $\nu\in\IBr(\ZZ(\widetilde CG))$, and
    \item $\widetilde\Omega'_B(\widetilde\chi\lambda)=\widetilde\Omega'_B(\widetilde\chi)\Res^{\tC G}_{\tC Z}(\lambda)$ for every $\widetilde\chi\in\IBr_B(\widetilde CG)$, and $\lambda\in\IBr(\widetilde CG/G)$.
\end{itemize}
Also by Proposition~\ref{rela-G-CN} (iii),
$\IBr_B(\tC G)=\IBr(\tC G\mid \IBr_B(G))$ and $\IBr_B(G)=\IBr(G\mid \IBr_B(\tC G))$.

On $\IBr_B(\widetilde CG)$ the group $\tC G\rtimes D$ acts by conjugation and the group of linear Brauer characters $\IBr(\tC G/G)$ by multiplication.
Let $\widetilde{\mathcal G}$ be a transversal in $\IBr_B(\widetilde CG)$ with respect to these combined actions.
For every $\widetilde\chi\in \widetilde{\mathcal G}$ 
we let $\widetilde\chi'\in\IBr(\tG\mid \widetilde\chi)$ and
we fix a Brauer character $\chi_0\in \IBr(G\mid\widetilde\chi')$
with the properties from (ii) ~(then we have $\chi_0\in\IBr_{B}(G)$ automatically) and let $\mathcal G:=\{ \chi_0\mid\widetilde\chi\in\widetilde{\mathcal G} \}\subseteq\IBr_B(G)$ be the set formed by them.
Then $\mathcal G$ is a $(\tC G\rtimes D)$-transversal in $\IBr_B(G)$.

Since $\widetilde \Omega'_B$ is $\tC\rtimes D$-equivariant,
the set $\widetilde{\sC}:=\widetilde \Omega'_B(\widetilde{\mathcal G})$ is a transversal in $\IBr_B(\tC Z)$ with respect to the combined actions of $\IBr(\tC Z/CZ)$ and $\tC\rtimes D$.
As before we can associate to every $\widetilde\psi\in \widetilde{\sC}$ a Brauer character $\psi_0\in \IBr(CZ\mid\widetilde\psi)$ with the properties from (iii)~(then  $\psi_0\in\IBr_{B}(CZ)$ automatically holds).
Let these Brauer characters form the set $\sC$, which is a $(\tC\rtimes D)$-transversal in $\IBr_B(CZ)$.

For $\widetilde\chi\in\widetilde{\mathcal G}$ and $\chi_0\in \mathcal G \cap \IBr_B(G\mid\widetilde\chi)$, we define $\Omega_B(\chi_0):=\psi_0$ to be the unique element in $\sC\cap \IBr_B(CZ\mid \widetilde\psi)$, where $\widetilde\psi=\widetilde\Omega'_B(\widetilde\chi)\in \widetilde{\sC}$.
Now we prove that $(\tC Z\rtimes D)_{\psi_0}=(\tC Z\rtimes D)_{\chi_0}$.

By (\ref{sta-ibr}) we have
$$(\tC G)_{\chi_0}=\bigg (\bigcap\limits_{\{\lambda\in\IBr(\tC G/G)\ |\ \widetilde\chi\lambda=\widetilde\chi\}} \ker(\lambda)\bigg)\bigcap \J^{\tC G}_{G}(\widetilde\chi)$$
and
$$(\tC Z)_{\psi_0}= \bigg(\bigcap\limits_{\{\lambda\in\IBr(\tC Z/CZ)\ |\ \widetilde\psi\lambda=\widetilde\psi\}} \ker(\lambda)\bigg)\bigcap  \J^{\tC Z}_{CZ}(\widetilde\psi).$$
Also,
$$D_{\chi_0}= \{ d\in D\ | \ \widetilde\chi^d=\widetilde\chi\lambda\  \text{for some}\  \lambda \in \IBr(\tC G/G)  \}$$
and $$D_{\psi_0}= \{ d\in D\ | \ \widetilde\psi^d=\widetilde\psi\lambda\  \text{for some}\  \lambda \in \IBr(\tC Z/CZ)  \}.$$
Thus $(\tC G)_{\chi_0}=\tC_{\psi_0} G$, $D_{\chi_0}=D_{\psi_0}$.
So $(\tC \rtimes D)_{\psi_0}=(\tC\rtimes D)_{\chi_0}$.

Hence we can define a $\tC\rtimes D$-equivariant bijection 
$\Omega_B: \ \IBr_B(G)\to \IBr_B(CZ)$ by $$\Omega_B(\chi_0^x):=\Omega_B(\chi_0)^x\ \text{for every}\ x\in \tC\rtimes D\ \text{and}\ \ \chi_0\in \mathcal G.$$
Now we prove (\ref{equa-1}).
By~\cite[Lem~3.8]{SV16}, it suffices to let $\chi\in\mathcal G$.
Then we can establish a central isomorphism (\ref{equa-1}) by Proposition~\ref{twostepext} by taking $A$, $B$, $N$, $M$, $X$, $Y$, $H$, $L$, $\theta$, $\varphi$
to be
$\tC G\rtimes D$, $\tC Z\rtimes D$, $G$, $CZ$, $\tC G$, $\tC Z$, $G\rtimes D$, $CZ\rtimes D$, $\chi$,  $\Omega_B(\chi)$
respectively,
 and this completes the proof.
\end{proof}

\section{Simple groups of Lie type in non-defining characteristic}\label{SG-Lie}

Let $\bG$ be a 
simple linear algebraic group of simply connected type over an algebraic closure of $\F_p$  (where $p$ is a prime).
Let  $\mathbf B$ be a Borel subgroup of $\bG$ with maximal torus $\mathbf T$.
Let $\Phi$, $\Delta$ denote the set of roots and simple roots of $\bG$ determined by $\mathbf B$ and $\mathbf T$.
For a description of the Frobenius endomorphisms, we use the Chevalley generators $x_\alpha(t)$ ($t\in\overline{\F}_q$, $\alpha\in \Phi$) as in~\cite[Thm.~1.12.1]{GLS98}.

We recall the automorphisms of $\bG$ described as in~\cite[\S2]{MS16}.
Let $F_0:\bG\to\bG$ be the field endomorphism of $\bG$ given by $F_0(x_\alpha(t))=x_\alpha(t^p)$ for every $t\in\overline{\F}_p$ and $\alpha\in \Phi$.
Any length-preserving automorphism  $\tau$ of the Dynkin diagram associated to $\Delta$ and hence automorphism of $\Phi$ determines a graph automorphism $\gamma$ of $\bG$ given by $\gamma(x_\alpha(t))=x_{\tau(\alpha)}(t)$ for every $t\in\overline{\F}_p$ and $\alpha\in \pm\Delta$.
Note that such $\gamma$ commutes with $F_0$.

Let $r$ be the rank of $\ZZ(\bG)$ (as finite abelian group)
and $\mathbf Z\cong (\overline{\F}_p^\times)^r$
a torus of that rank with an embedding of $\ZZ(\bG)$.
We set $\widetilde{\bG}:=\bG\times_{\ZZ(\bG)} \mathbf Z$ the central product of $\bG$ with $\mathbf Z$ over $\ZZ(\bG)$.
Then $\widetilde{\bG}$ is a connected reductive group such that the natural map $\bG\hookrightarrow\widetilde{\bG}$ is a regular embedding (see~\cite[\S1.7]{GM20}).
From this, we can extend $F_0$ to a Frobenius endomorphism of $\widetilde{\bG}$ 
and $\gamma$  to an automorphism of $\widetilde{\bG}$ as in~\cite[p.~874]{MS16}.

 Let us consider a Frobenius endomorphism $F:=F_0^f\gamma$, with $\gamma$ a (possibly trivial) graph automorphism of $\bG$, leaving aside the case of types $^2{\mathsf B}_2$, $^2{\mathsf G}_2$ and $^2{\mathsf F}_4$.
Then $F$ defines an $\F_q$-structure on $\widetilde{\bG}$, where $q=p^f$.
The group of rational points $G=\bG^F$ and $\tG=\widetilde{\bG}^F$  are finite~(see~\cite[Thm.~21.5]{MT11}).
Note by construction that the order of $F_0$ as
automorphism of $G$ coincides with that of $F_0$ as automorphism of $\tG$.
The analogous statement also holds for any graph automorphism $\gamma$ and the
automorphisms of $\tG$ associated with it.

Let $D$ be the subgroup of $\Aut(G)$ generated by $F_0$ (here we identify $F_0$ with $F_0|_G$) and the graph automorphisms  commuting with $F$.
Then $\tG\rtimes D$ is well defined and induces all automorphisms of $G$, see~\cite[Thm.~2.5.1]{GLS98}. Concerning coprime action, note that automorphisms of $G$ of order prime to $|G|$ are therefore restrictions to $G$ of Frobenius endomorphisms of $\bG$.

The following was introduced in \cite{Sp12} in relation with the inductive McKay condition. We just saw the similar statement for Brauer characters in Theorem~\ref{criforlie}(ii).

\begin{defn}\label{Ainfty} When $\bG$ is simple simply connected, one says that $\bG^F$ satisfies the $A(\infty)$ condition if and only if 
 \begin{itemize}
 
\item[$A(\infty)$] every character $\chi\in\Irr(\bG^{F})$ has a ${\widetilde{\bG}}^{F}$-conjugate $\chi_0$ such that $({\widetilde{\bG}}^{F}\rtimes D)_{\chi_0}={\widetilde{\bG}}^{F}_{\chi_0}\rtimes D_{\chi_0}$ and $\chi_0$ extends to $\bG^{F}\rtimes D_{\chi_0}$.	 
\end{itemize}
\end{defn}

This was proved recently to be satisfied in all types. 

\begin{thm}\label{split-char} Any group $\bG^F$ with $\bG$ simple simply connected satisfies $A(\infty)$.	
\end{thm}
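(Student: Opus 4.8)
The plan is to reduce the statement to known results in the literature on the inductive McKay and related conditions, where precisely this property has been verified type-by-type. First I would recall that the condition $A(\infty)$ requires two things for a suitable $\widetilde\bG^F$-conjugate $\chi_0$ of each $\chi\in\Irr(\bG^F)$: that the stabilizer $(\widetilde\bG^F\rtimes D)_{\chi_0}$ splits as $\widetilde\bG^F_{\chi_0}\rtimes D_{\chi_0}$, and that $\chi_0$ extends to $\bG^F\rtimes D_{\chi_0}$. The strategy is to invoke the results established in the course of verifying the inductive McKay condition for groups of Lie type, where exactly these two assertions appear: the extendibility of $\widetilde\bG^F$-stable characters (and more precisely of well-chosen conjugates) is at the heart of \cite{CS13,CS17A,CS19,S23} and related works, and the splitting of stabilizers in the semidirect product by $D$ is the content of the so-called ``maximal extendibility'' statements combined with the structure of $\mathrm{Out}(\bG^F)$.

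The key steps, in order, would be the following. I would first treat the generic case, i.e. $\bG$ of a type other than those with an exceptional Schur multiplier or the very small rank exceptions, reducing to the statement that for each $\chi$ there is a $\widetilde\bG^F$-conjugate whose stabilizer in $\widetilde\bG^F\rtimes D$ is as small as possible relative to its intersection with $\widetilde\bG^F$; this uses that $D$ acts on the set of $\widetilde\bG^F$-orbits in $\Irr(\bG^F)$ and that one can choose orbit representatives adapted to the $D$-action via a Gallagher-type argument over the abelian group $\widetilde\bG^F/\bG^F$, exactly as in the McKay program. Second, I would record the extendibility of the chosen $\chi_0$ to $\bG^F\rtimes D_{\chi_0}$ using that $D_{\chi_0}$ is (after the splitting) a group of field and graph automorphisms, which act on $\chi_0$ with trivial obstruction by the explicit construction of extensions via Harish-Chandra or Deligne--Lusztig theory together with the known results on extendibility to the inertia group in $\bG^F\rtimes D$. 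Third, I would dispose of the finitely many exceptional cases (exceptional covering groups, and small-rank or small-characteristic groups such as those appearing in \cite[Table~6.1.3]{GLS98}) by direct inspection, using character-table data and the fact that $\mathrm{Out}(\bG^F)$ is then small.

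The main obstacle will be the \emph{simultaneous} choice of a single conjugate $\chi_0$ that works for \emph{all} of $D$ at once — i.e. arranging both the stabilizer-splitting and the extendibility with the same representative — rather than handling field and graph automorphisms separately; this is the technically delicate point where one must appeal to the refined results of the inductive McKay verification (where the compatible choice of extensions across field and diagonal automorphisms was the genuinely hard part), and in particular to the fact, now available in complete generality, that these compatible extensions exist. Once that input is granted, the rest of the argument is bookkeeping about which subgroup of $D$ stabilizes $\chi_0$ and a routine Clifford-theoretic extension argument over the abelian quotient $\widetilde\bG^F/\bG^F$, entirely parallel to the proof of \cite[Thm.~2.5]{CS13}.
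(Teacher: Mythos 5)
Your proposal takes essentially the same route as the paper: Theorem~\ref{split-char} is proved there by a direct citation to the type-by-type verifications of $A(\infty)$ in the inductive McKay literature (\cite[Thm.~4.1]{CS17a}, \cite[Thm.~3.1]{CS17b}, \cite[Thm.~B]{CS19}, \cite[Thm.~A]{Sp22}), which is exactly what you propose to do; your sketch of the internal strategy of those papers (generic types, Gallagher-type arguments over $\widetilde\bG^F/\bG^F$, compatible extensions across field and graph automorphisms, small exceptional cases by inspection) is a fair summary of what they contain, though the paper itself does not reproduce it.
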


\begin{proof}
This is proved in~\cite[Thm.~4.1]{CS17a},~\cite[Thm.~3.1]{CS17b},~\cite[Thm.~B]{CS19} and \cite[Thm.~A]{Sp22}.
\end{proof}

We denote by
${\bG}^{*}$ and ${\widetilde{\bG}}^*$  the dual groups of $\bG$ and $\widetilde{\bG}$ respectively with corresponding Frobenius  endomorphisms also denoted by $F$.
Many constructions for algebraic groups have a convenient formulation in terms of generic groups.
Suppose that $\mathbb G$ is a complete root datum of $(\bG, F)$ (in the sense of~\cite[\S2]{BM92}) so that $G=\bG^F=\mathbb G(q)$.
Then we have a complete root datum $\mathbb G^*$ for $(\bG^*,F)$~(see~\cite[\S1]{BMM93}).
Also, ${\bG^*}^F=\mathbb G^*(q)$.

Let $B\le\Aut(G)$ such that $\gcd(|G|,|B|)=1$, then by~\cite[\S2]{MNS15}, $B$ is $\Aut(G)$-conjugate to some subgroup of the
field automorphisms of $G$.
By~\cite[Rmk.~6.2]{SV16}, when considering the  (iBG)  condition, we can assume  that $B$ is generated by some power of $F_0$ and then $B$ is cyclic.  
Then a bijection as required in Definition~\ref{iBGC} (I) has been given in~\cite[Thm.~3.1]{NST17}.
However, it seems hard to construct the central isomorphism (\ref{equaiton-22}) in general then.
Therefore, we start with considering the irreducible ordinary characters.
Let $C=\C_G(B)$ and $\tC=\C_{\tG}(B)$.

\begin{thm}\label{corr-ord-char}
	Suppose that $\mathbb G$ is not of type ${}^3{\mathsf D}_4$. 
Let $B=\langle F_0^e \rangle$ such that $\gcd(|\tG|,|B|)=1$, and let $C=\C_G(B)$, $\tC=\C_{\tG}(B)$.
Then there is a $D$-equivariant bijection $\widetilde{\Xi}_B: \Irr_B(\tG)\to \Irr(\tC)$ such that 
\begin{enumerate}[\rm(i)]
	\item $\widetilde{\Xi}_B(\Irr_B(\tG\mid \nu))=\Irr(\tC\mid \nu)$ for every $\nu\in\Irr(\ZZ(\tG)\cap \tC)$, and
\item $\widetilde{\Xi}_B(\widetilde\chi\lambda)=\widetilde{\Xi}_B(\widetilde\chi)\Res^{\tG}_{\tC}(\lambda)$ for every $\widetilde\chi\in\Irr_B(\tG)$ and $\lambda\in\Irr_B(\tG/G)$.
\end{enumerate}
\end{thm}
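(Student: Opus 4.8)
The plan is to construct the bijection $\widetilde{\Xi}_B$ using Deligne--Lusztig theory and the classification of $\Irr(\tG)$ via rational semisimple classes in $\widetilde{\bG}^*{}^F$, tracking compatibility with the action of $\langle F_0 \rangle$ and the twisting by $\Irr(\tG/G)$. Recall that $B = \langle F_0^e \rangle$ is cyclic of order prime to $|\tG|$, so $C = \bG^{F_0^e} {}^{F}$... more precisely $C = \C_G(B)$ is itself a group of fixed points $\bG^{F'}$ for a Steinberg endomorphism $F' = F_0^e \gamma'$ commuting with $F$ (indeed $F$ is a power of $F_0$ times a graph automorphism, so $B$ and $F$ generate an abelian group of endomorphisms of $\bG$, and $C$ is the group of $\F_{q_0}$-points of $\bG$ for the appropriate smaller field), and similarly $\tC = \widetilde{\bG}^{F'}$. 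The strategy is then: first use the result of \cite[Thm.~3.1]{NST17} (a Glauberman-type bijection for ordinary characters of groups of Lie type under coprime field automorphisms) to get a bijection $\Irr_B(G) \to \Irr(C)$ — but since we need the version for $\tG$ and $\tC$ together with the extra compatibility properties (i) and (ii), I would rather reconstruct it directly on the level of $\tG$.

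First I would recall that $\Irr(\tG)$ decomposes into rational Lusztig series $\cE(\tG, \tilde s)$ indexed by $\widetilde{\bG}^*{}^F$-classes of semisimple elements $\tilde s$, and each such series is in bijection (via Jordan decomposition, which for $\widetilde{\bG}$ — a group with connected centre — is canonical and compatible with automorphisms, cf.\ \cite{CS13,DM20}) with unipotent characters of $\C_{\widetilde{\bG}^*}(\tilde s)^F$. The action of $F_0$ permutes these series according to its action on semisimple classes, and $\Irr(\tG/G)$ acts by translating $\tilde s$ by elements of the kernel of $\widetilde{\bG}^* \to \bG^*$ (a central torus). The same picture holds for $\tC = \widetilde{\bG}^{F'}$ with $F'$ in place of $F$. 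Since $F$ and $F'$ are both powers of $F_0$ times (commuting) graph automorphisms and $\gcd(|\tG|, |B|) = 1$, an $F_0^e$-fixed semisimple class of $\widetilde{\bG}^*{}^F$ is the same data as a semisimple class of $\widetilde{\bG}^*{}^{F'}$ — this is the geometric heart of the Glauberman correspondence in this setting, and it is where the Lang--Steinberg theorem and the coprimality are used (one checks that the relevant $H^1$ sets match up because $B$ acts coprimely, so $\C_{\widetilde{\bG}^*}(\tilde s)^{F'} $ is obtained from $\C_{\widetilde{\bG}^*}(\tilde s)^F$ by the same $B$-fixed-point operation, preserving unipotent characters). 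I would then \emph{define} $\widetilde{\Xi}_B$ on each $B$-fixed series by sending the character of $\tG$ with Jordan data $(\tilde s, \rho)$ ($\rho$ a unipotent character of the centralizer) to the character of $\tC$ with the ``same'' data $(\tilde s, \rho)$, after identifying the two centralizer groups and noting that unipotent characters are canonically labelled independently of $q$ and hence transport automatically.

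Then I would verify the three required properties. $D$-equivariance follows because $D$ is generated by $F_0$ and graph automorphisms commuting with $F$, all of which act on Jordan data in a way that commutes with the $B$-fixed-point identification (here one uses that $F_0$, $\gamma$, $F'$ all commute as endomorphisms of $\bG$, so there is no obstruction to compatibility). Property (ii), compatibility with $\Res^{\tG}_{\tC}$ of linear characters $\lambda \in \Irr_B(\tG/G)$, follows because such $\lambda$ corresponds to a $B$-fixed central element $z$ in $\ker(\widetilde{\bG}^* \to \bG^*)^F$, multiplication by $\lambda$ shifts $\tilde s \mapsto \tilde s z$, and the restriction map on these linear characters corresponds (via Corollary~\ref{rela-abel} applied to the abelian group $\tG/G$ and its $C$-part) to the analogous shift for $\tC$; one must check the identification of $\tC G / G$ with $\C_{\tG/G}(B)$ and that $z$ is traced correctly, which is routine from the functoriality of duality. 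Property (i), the compatibility with $\Irr(\ZZ(\tG) \cap \tC \mid \nu)$, is similar: central characters of $\tG$ are read off from the image of $\tilde s$ in $\widetilde{\bG}^* / [\widetilde{\bG}^*, \widetilde{\bG}^*]$, and $\ZZ(\tG) \cap \tC = \ZZ(\widetilde{\bG})^{F'} \cap (\text{stuff})$ is exactly the $B$-fixed part, so the identification of Jordan data automatically matches central characters; again Corollary~\ref{rela-abel} (or the direct geometric statement) handles the bookkeeping.

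\emph{The main obstacle.} The hard part will be making the identification ``$B$-fixed semisimple class of $\widetilde{\bG}^*{}^F$ $=$ semisimple class of $\widetilde{\bG}^*{}^{F'}$'' genuinely canonical and, crucially, \emph{simultaneously} compatible with Jordan decomposition, with the $D$-action, and with the centre — i.e., making all the diagrams commute on the nose rather than just up to unspecified choices. In particular the Jordan decomposition map, while canonical for connected centre, must be shown to intertwine the two fixed-point functors; this is presumably why the hypothesis excludes type ${}^3{\mathsf D}_4$ (where a graph automorphism of order $3$ interacts badly with coprimality when $3 \mid$ relevant orders) and why $\tG$ with its regular embedding is used throughout rather than $G$ itself (to avoid the multiplicity and non-canonicity issues of Jordan decomposition for disconnected centre). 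I expect the proof in the paper to handle this by an explicit case analysis over the possible $(\mathbb G, B)$, reducing via the generic-group formalism (the complete root data $\mathbb G$, $\mathbb G^*$ recalled just before the statement) to a uniform combinatorial statement about $q \mapsto q_0$, and then invoking \cite[Thm.~3.1]{NST17} together with the compatibility of Jordan decomposition with automorphisms from \cite{CS13} (or its successors) to upgrade the bare bijection to one with properties (i) and (ii).
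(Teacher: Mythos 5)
Your proposal is correct and takes essentially the same approach as the paper: the paper also identifies $\tC = \widetilde{\bG}^{F_1}$ for a carefully chosen power-of-$F_0$ (or power-of-$F_0$-times-$\gamma$) endomorphism $F_1$ generating $B$, and constructs $\widetilde{\Xi}_B$ via the Jordan decomposition $\Psi_{\widetilde{\bG},F}$, the canonical correspondence of $F_1$-fixed semisimple classes for $F$ with those for $F_1$, and a $D$-equivariant bijection on unipotent characters of centralizers, then reads off (i) and (ii) from the behaviour of Jordan decomposition under central twists and duality. The one point you appeal to informally --- that unipotent characters ``transport automatically'' between $\C_{\widetilde{\bG}^*}(s)^{F}$ and $\C_{\widetilde{\bG}^*}(s)^{F_1}$ in a $D$-equivariant way --- is precisely the non-trivial content of \cite[Cor.~3.6]{CS19}, which the paper invokes; and the exclusion of ${}^3\mathsf D_4$ is simply so that $\gamma$ has order at most $2$, making the explicit determination of $F_1$ (in particular $F_1 = F_0^{e/2}\gamma$ in the twisted case) a short computation rather than a case analysis.
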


\begin{proof}
Since $(\bG, F)$ is not of type ${}^3{\mathsf D}_4$, $\gamma$ is of order $1$ or $2$. 	
We recall that, as an automorphism of $G$ (or $\tG$), $F_0$ is of order $f$ if $\gamma$  is of order $1$
while $F_0$ is of order $2f$ if $\gamma$  is of order $2$.

First assume that $\gamma$ is trivial. 
Let $q_0=p^e$ and $F_1:=F_0^e$.
Then $\tC=\widetilde{\bG}^{F_1}$ and $C=\bG^{F_1}=\mathbb G(q_0)$.  Here we have $F=F_1^{f/e}$.	
Of course $B$ is generated by $F_1$.

Now let $\gamma$ be of order 2. Then $\gamma=F_0^f$ as automorphisms of $\tG$.
From $\gcd(|G|,|B|)=1$ we know that $\frac{2f}{e}$ is odd and then $e$ is even.
So for $g\in\tG$,  one has
$g\in \tC$ $\Leftrightarrow$ $g=F_0^e(g)=(F_0^\frac{e}{2})^2(g)$ $\Leftrightarrow$ 
$\gamma(g)=F_0^f(g)=(F_0^\frac{e}{2})^\frac{2f}{e} (g)=F_0^\frac{e}{2}(g)$ $\Leftrightarrow$ 
$\gamma F _0^\frac{e}{2}(g)=g$.
Let $q_0=p^\frac{e}{2}$ and $F_1:=F_0^\frac{e}{2}\gamma$.
Then $\tC=\widetilde{\bG}^{F_1}$ and $C=\bG^{F_1}=\mathbb G(q_0)$.
It can be checked that $F=F_1^{\frac{2f}{e}}$.	
As automorphisms of $\widetilde\bG^F$, we have
$F_1=F_0^{\frac{e}{2}}F_0^{f}=(F_0^e)^{\frac{\frac{2f}{e}+1}{2}}$.
Recall that $F_0^e$ has order $\frac{2f}{e}$, which is coprime with $\frac{\frac{2f}{e}+1}{2}$.
Therefore, $F_1$ generates $B$.

In both cases, $C=\bG^{F_1}=\mathbb G(q_0)$ and  $\tC=\widetilde{\bG}^{F_1}$ so that $q$ is a power of $q_0$ and $F$ is a power of $F_1$.
In addition, $B$ is generated by $F_1$.
Thus an irreducible character of $\widetilde\bG^F$ is $B$-invariant if and only if it is $F_1$-invariant.

The required bijection  $\widetilde{\Xi}_B$ is indeed constructed in the proof of~\cite[Thm.~3.7]{CS19}.
But for convenience, we recall some details. 
First note that ${\bG^*}^{F}=\mathbb G^*(q)$ and ${\bG^*}^{F_1}=\mathbb G^*(q_0)$.
By the Jordan decomposition, there is a bijection $\Psi_{\widetilde{\bG},F}$ from the set of
$(\widetilde{\bG}^*)^F$-conjugacy classes of pairs $(s,\phi)$ with $s\in (\widetilde{\bG}^*)^F_{\mathrm{ss}}$ and $\phi\in\mathcal E(\C_{\widetilde{\bG}^*}(s)^F,1)$
to $\Irr(\widetilde{\bG}^F)$.
Here $(\widetilde{\bG}^*)^F_{\mathrm{ss}}$ denotes the set of semisimple elements of $\widetilde{\bG}^*$ which are $F$-invariant.
In addition, we choose $\Psi_{\widetilde{\bG},F}$ as in~\cite[Thm.~7.1]{DM90},
and then $\Psi_{\widetilde{\bG},F}$ is $F_1$-equivariant (see also \cite[Thm.~3.1]{CS13}).
Similarly we have the bijection $\Psi_{\widetilde{\bG},F_1}$ for the parametrization of irreducible characters of $\widetilde{\bG}^{F_1}$.

Since centralizers of semisimple elements of $\widetilde{\bG}^*$ are connected by~\cite[Rmk.~11.2.2~(ii)]{DM91}, there are well-known bijections $$(\widetilde{\bG}^*)^F_{\mathrm{ss}}/\sim_{(\widetilde{\bG}^*)^F}\longleftrightarrow (\widetilde{\bG}^*)^F_{\mathrm{ss}}/\sim_{\widetilde{\bG}^*}\longleftrightarrow (\widetilde{\bG}^*_{\mathrm{ss}}/\sim_{\widetilde{\bG}^*})^F$$~(see~\cite[4.3.6]{Ge03}).
Here, $\sim_{\widetilde{\bG}^*}$ (or $\sim_{(\widetilde{\bG}^*)^F}$, resp.) denotes the relation on $\widetilde{\bG}^*$ (or $(\widetilde{\bG}^*)^F$, resp.) of conjugacy.
They are $F_1$-equivariant and hence 
$$(\widetilde{\bG}^*)^{F_1}_{\mathrm{ss}}/\sim_{(\widetilde{\bG}^*)^{F_1}}\xrightarrow{\sim}((\widetilde{\bG}^*)^F_{\mathrm{ss}}/\sim_{(\widetilde{\bG}^*)^F})^{F_1}$$
by the obvious map.
Using~\cite[Cor.~3.6]{CS19}, there is a $D$-equivariant bijection
$$\Upsilon: \mathcal E(\C_{\widetilde{\bG}^*}(s)^{F_1},1)\longrightarrow\mathcal E(\C_{\widetilde{\bG}^*}(s)^F,1)^{F_1},$$
which is equivariant for algebraic automorphisms of $\widetilde{\bG}^*$ commuting with $F_1$.
Therefore $F_1$-stable $(\widetilde{\bG}^*)^F$-classes of pairs $(s,\phi)$ correspond to $(\widetilde{\bG}^*)^{F_1}$-classes of pairs $(s,\Upsilon^{-1}(\phi))$ with $s\in(\widetilde{\bG}^*)^{F_1}_{\mathrm{ss}}$ and $\phi\in\mathcal E(\C_{\widetilde{\bG}^*}(s)^F,1)^{F_1}$.

Therefore, we have a bijection
$\widetilde{\Omega}_B:\Irr_B(\widetilde{\bG}^F)\to\Irr(\widetilde{\bG}^{F_1})$ such that
$$\widetilde{\Omega}_B^{-1}(\Psi_{\widetilde{\bG},F_1}(s,\phi))=\Psi_{\widetilde{\bG},F}(s,\Upsilon(\phi)).$$
For $\sigma\in D$, we can define a dual $\sigma^*\in\Aut({{\bG}^*}^F)$ in the sense of~\cite[Def.~2.1]{CS13}.
Note that the dual of field (or graph) automorphisms are also field (or graph) automorphisms and they have similar forms.
By~\cite[Thm.~7.1~(vi)]{DM90}~(or~\cite[Thm.~3.1]{CS13}),
$\Psi_{\widetilde\bG,F}(s,\phi)^{\sigma^{-1}}=\Psi_{\widetilde\bG,F}(\sigma^*(s),\sigma^*(\phi))$.
From this, $\widetilde{\Omega}_B$ is $D$-equivariant.
Furthermore, (i) follows 
by choosing suitable $\hat s$~(see~\cite[(8.14)]{CE04}),
since all the characters in the Lusztig series $\mathcal E(\widetilde\bG^F,s)$ lie over the same character of $\ZZ(\widetilde\bG^F)$ which is the restriction of $\hat s$.

Now we prove (ii).
Let $\hat z_{F}:=\Psi_{\widetilde\bG,F}(z,1_{(\widetilde\bG^*)^F})$ for $z\in \ZZ(\widetilde\bG^*)^F$
and $\hat z_{F_1}:=\Psi_{\widetilde\bG,F_1}(z,1_{(\widetilde\bG^*)^{F_1}})$ for $z\in \ZZ(\widetilde\bG^*)^{F_1}$.
Then $\Irr(\widetilde\bG^F/\bG^F)=\{ \hat z_{F}\mid z\in \ZZ(\widetilde\bG^*)^F \}$
and $\Irr(\widetilde\bG^{F_1}/\bG^{F_1})=\{ \hat z_{F_1}\mid z\in \ZZ(\widetilde\bG^*)^{F_1} \}$.
Note that $\Irr_B(\widetilde\bG^F/\bG^F)=\{ \hat z_{F}\mid z\in \ZZ(\widetilde\bG^*)^{F_1} \}$ and
$\Res^{\bG^F}_{\bG^{F_1}}(\hat z_F)=\hat z_{F_1}$.
So (ii) follows by~\cite[Thm.~7.1~(iii)]{DM90}.
\end{proof}

From now on we assume that $\ell\ne p$.
According to Conjecture~\ref{GeckC} recalled in our Introduction it is expected that all $\ell$-blocks of a (quasi-simple) groups of Lie type have a unitriangular decomposition matrix.
Moreover, we expect that the corresponding basic set can be chosen stable under the action of the automorphism group~(see for example \cite[Thm.~2.5]{De17} and~\cite[4.7 and 4.8]{Ma17}).

\begin{defn}
Let $H$ be a finite group.
An injective map $\Theta:\IBr(H)\to\Irr(H)$ is called a \emph{unitriangular map}, if there exists a suitable ordering $\le$ of $\IBr(H)$ such that $\Dec_{\Theta(\IBr(H)),\IBr(H)}$ is unitriangular with respect to $\le$ and the ordering of $\Theta(\IBr(H))$ corresponding to $\le$ via $\Theta$.	
\end{defn}

We propose the following hypothesis.

\begin{hyp}\label{unitri}
Let $F:\bG\to \bG$ be a Frobenius endomorphism
endowing $\bG$ with an $\F_{q}$-structure so that $\mathbb G$ is the complete root datum of $(\bG,F)$~(which implies $\bG^{F}=\mathbb G(q)$).	
Suppose that there is a unitriangular map $\widetilde\Theta_q:\IBr({\widetilde{\bG}}^{F})\to\Irr({\widetilde{\bG}}^{F})$ such  that 
\begin{enumerate}[(i)]
\item 	$\widetilde\Theta_q(\IBr({\widetilde{\bG}}^{F}))$ is $\Irr(\tbG^F/\bG^F)_{\ell'}$-stable (note that $\Irr(\tbG^F/\bG^F)$ acts on $\Irr(\tbG^F)$ by multiplication) and $D$-stable,
\item
there is a unitriangular map $\Theta_q:\IBr(\bG^F)\to\Irr(\bG^F)$ such that 
$\Theta_q(\IBr(\bG^F\mid\widetilde\chi))=\Irr(\bG^F\mid \widetilde\Theta_q(\widetilde\chi))$ for every $\widetilde\chi\in\IBr(\tbG^F)$, and
\item $\widetilde\Theta_q$ is compatible with Frobenius endomorphisms, i.e.,  for $q_0\mid q$ with $q=q_0^e$ satisfying $\gcd(e,|\bG^{F}|)=1$, we have
$$\tilde{\Xi}(\widetilde\Theta_q(\IBr({\widetilde{\bG}}^{F}))^{F(q_0)})=\widetilde\Theta_{q_0}(\IBr({\widetilde{\bG}}^{F(q_0)}))$$ where
$F(q_0):\bG\to \bG$ is a Frobenius endomorphism
endowing $\bG$ with an $\F_{q_0}$-structure so that $\mathbb G$ is also the complete root datum of $(\bG,F(q_0))$,
$F$ is a power of $F(q_0)$ and $\widetilde\Xi$ is as in Theorem~\ref{corr-ord-char}.
\end{enumerate}
\end{hyp}

We say Hypothesis~\ref{unitri} holds for $\mathbb G$ if it is true for any $q$.

\begin{rmk}
According to \cite[Thm.~2.5]{De17}, condition (ii) of Hypothesis~\ref{unitri} can be replaced by the following: 
$|\IBr(\bG^{F}\mid \widetilde\chi)|_\ell=|\Irr(\bG^{F}\mid \widetilde\Theta_q(\widetilde\chi))|_\ell$ (which imples that $|\IBr(\bG^{F}\mid \widetilde\chi)|=|\Irr(\bG^{F}\mid \widetilde\Theta_q(\widetilde\chi))|$ in the situation of Hypothesis~\ref{unitri} (i))
for every $\widetilde\chi\in\IBr({\widetilde{\bG}}^{F})$.
\end{rmk}

Recall that according to~\cite[Thm.~1.7.15]{GM20}, $\Res^{{\widetilde{\bG}}^{F}}_{{\bG}^{F}}(\widetilde \chi)$ is multiplicity-free
for every $\widetilde \chi\in\Irr({\widetilde{\bG}}^{F})\cup\IBr({\widetilde{\bG}}^{F})$.
Then as said in Section 2.2 above every $\chi\in\Irr(\bG^F)\cup\IBr(\bG^F)$ extends to its stabilizer in ${\widetilde{\bG}}^{F}$.

\begin{thm}\label{mainthm}
Keep the setup of Theorem~\ref{corr-ord-char}, 
let $\ell\nmid q$,
and assume further that Hypothesis~\ref{unitri} holds for $\mathbb G$.
If $S=\bG^F/\ZZ(\bG^F)$ is simple, then the  (iBG)  condition (see Definition~\ref{iBGC}) holds for $S$ and the prime $\ell$.
\end{thm}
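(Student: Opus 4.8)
The plan is to verify the four groups of hypotheses of Theorem~\ref{criforlie} for the triple $(\widetilde{\bG}^F,\bG^F,D)$ and then invoke that theorem. First I would dispose of two preliminary points. If $G:=\bG^F$ is \emph{not} the universal covering group of $S$, then $S$ is a simple group of Lie type with exceptional Schur multiplier, so its universal cover is an exceptional covering group and Corollary~\ref{exceptional-covering} already gives the (iBG) condition; hence I may assume $G=\bG^F$ is the universal cover. The ``global'' hypotheses of Theorem~\ref{criforlie} then hold with $\tG=\widetilde{\bG}^F$: the inclusion $\bG\hookrightarrow\widetilde\bG$ is a regular embedding so $\tG/G$ is abelian; $\C_{\tG\rtimes D}(G)=\ZZ(\tG)$ because $G$ is quasi-simple and the non-trivial elements of $D$ act as non-inner automorphisms; and $\tG\rtimes D$ induces all of $\Aut(G)$ by \cite[Thm.~2.5.1]{GLS98}. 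Now fix $B\le D$ with $\gcd(|G|,|B|)=1$. By \cite[\S2]{MNS15} together with \cite[Rmk.~6.2]{SV16} I may assume $B=\langle F_0^e\rangle$, which is exactly the situation of Theorem~\ref{corr-ord-char}; then $B\le\langle F_0\rangle\le\ZZ(D)$, and since every prime dividing $|\tG:G|$ already divides $|G|$ one gets $\gcd(|\tG|,|B|)=1$, which is condition~(i)(a). I then fix $F_1$, $q_0\mid q$, $C=\bG^{F_1}=\mathbb G(q_0)$, $\tC=\widetilde\bG^{F_1}$ as in the proof of Theorem~\ref{corr-ord-char}, so that $B=\langle F_1\rangle$, $F$ is a power of $F_1$, and $\mathbb G$ is also the complete root datum of $(\bG,F_1)$.

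Conditions (i)(b) and (i)(c) are immediate: by \cite[Thm.~1.7.15]{GM20} the restrictions $\Res^{\tG}_G$ and $\Res^{\tC}_C$ are multiplicity-free, so (as recalled in Section~\ref{Preliminiaries}) every irreducible Brauer character of $G$, resp.\ of $C$, extends to its stabiliser in $\tG$, resp.\ in $\tC$. For (ii) and (iii) I would transfer the $A(\infty)$ property (Theorem~\ref{split-char}) from ordinary to Brauer characters via the unitriangular maps of Hypothesis~\ref{unitri}; I treat (ii), the argument for (iii) being identical after replacing $(\bG,F)$, $\widetilde\Theta_q$, $\Theta_q$ by $(\bG,F_1)$, $\widetilde\Theta_{q_0}$, $\Theta_{q_0}$ (Hypothesis~\ref{unitri} holds for $\mathbb G$, hence for $q_0$, and $D$ is contained in the analogous group attached to $(\bG,F_1)$). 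By Hypothesis~\ref{unitri}(i),(ii) the set $\cB:=\Theta_q(\IBr(G))=\Irr(G\mid\widetilde\Theta_q(\IBr(\tG)))$ is stable under $\tG\rtimes D$; since decomposition numbers are invariant under automorphisms and a unitriangular decomposition submatrix determines a unique bijection, uniqueness forces $\Theta_q$ to be $(\tG\rtimes D)$-equivariant. Given $\widetilde\chi\in\IBr_B(\tG)$, apply $A(\infty)$ to $\widetilde\Theta_q(\widetilde\chi)$ to get $\chi_0'\in\Irr(G\mid\widetilde\Theta_q(\widetilde\chi))$ with $(\tG\rtimes D)_{\chi_0'}=\tG_{\chi_0'}\rtimes D_{\chi_0'}$ and $\chi_0'$ extending to $G\rtimes D_{\chi_0'}$, and put $\chi_0:=\Theta_q^{-1}(\chi_0')\in\IBr(G\mid\widetilde\chi)$. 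Equivariance of $\Theta_q$ then gives $(\tG\rtimes D)_{\chi_0}=\tG_{\chi_0}\rtimes D_{\chi_0}$, which is (ii)(a). For (ii)(b), extendibility of $\chi_0$ to $G\rtimes D_{\chi_0}$ may be checked Sylow-by-Sylow of $D_{\chi_0}$: on the $\ell$-part it is automatic, and on an $\ell'$-complement $Q$ one uses that $\chi_0$ occurs with multiplicity exactly $1$ in $(\chi_0')^0$ (because $\Dec_{\cB,\IBr(G)}$ is unitriangular and $d_{\chi_0',\chi_0}=1$): reducing modulo $\ell$ a projective representation of $G\rtimes Q$ realising the extension of $\chi_0'$ and extracting the $\chi_0$-isotypic part by Clifford theory over the twisted group algebra yields a projective representation associated to $\chi_0$ whose factor set is a coboundary, so $\chi_0$ extends to $G\rtimes Q$.

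For condition (iv) I would set
$$\widetilde\Omega_B:=\widetilde\Theta_{q_0}^{-1}\circ\widetilde\Xi_B\circ\widetilde\Theta_q:\IBr_B(\tG)\longrightarrow\IBr(\tC),$$
with $\widetilde\Xi_B$ from Theorem~\ref{corr-ord-char}. This is a well-defined bijection: $\widetilde\Theta_q$ (being $D$-equivariant, $B\le D$) restricts to a bijection of $\IBr_B(\tG)$ onto $\widetilde\Theta_q(\IBr(\tG))\cap\Irr_B(\tG)$, which $\widetilde\Xi_B$ carries bijectively onto $\widetilde\Theta_{q_0}(\IBr(\tC))$ precisely by Hypothesis~\ref{unitri}(iii), and $\widetilde\Theta_{q_0}^{-1}$ returns $\IBr(\tC)$; it is $D$-equivariant since all three factors are. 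Then (iv)(c) follows from Hypothesis~\ref{unitri}(i), the uniqueness argument above applied to multiplication by $\Irr(\tG/G)_{\ell'}$ (after lifting $\lambda\in\IBr_B(\tG/G)$ uniquely to an $\ell'$-character), and Theorem~\ref{corr-ord-char}(ii); and (iv)(b) follows similarly from Theorem~\ref{corr-ord-char}(i), using that every Brauer constituent of $\chi^0$ lies over the reduction of the central character underlying $\chi$. What remains, and where the work lies, is (iv)(a), the central-product identity $\tC G\cap\J^{\tG}_G(\widetilde\chi)=G.\J^{\tC}_C(\widetilde\Omega_B(\widetilde\chi))$. Using \eqref{sta-ibr}, the group $\J^{\tG}_G(\widetilde\chi)$ is governed by $\mathrm{O}^{\ell}(\tG)$ together with the stabiliser in $\tG$ of a constituent $\theta\in\IBr(G\mid\widetilde\chi)$; by $(\tG\rtimes D)$-equivariance of $\Theta_q$ one has $\tG_\theta=\tG_{\Theta_q(\theta)}$, and \eqref{sta-irr} re-expresses this in terms of the linear characters of $\tG/G$ fixing the ordinary character $\widetilde\Theta_q(\widetilde\chi)$, with the analogous description on the $\tC$-side via $\widetilde\Xi_B(\widetilde\Theta_q(\widetilde\chi))$. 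Matching the two reduces to the compatibility of $\widetilde\Xi_B$ with multiplication by linear characters (Theorem~\ref{corr-ord-char}(ii)) and the relation $\Res^{G}_{C}(\hat z_F)=\hat z_{F_1}$ from the proof of Theorem~\ref{corr-ord-char}, the $\ell$-part bookkeeping being controlled by Hypothesis~\ref{unitri}(iii); compare Proposition~\ref{central-extension}(iv).

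Once (i)--(iv) are established, Theorem~\ref{criforlie} yields the (iBG) condition for $S$ and $\ell$. The main obstacle I expect is precisely condition (iv)(a): the remaining items reduce fairly mechanically to inputs already available (multiplicity-freeness, $A(\infty)$, the ordinary correspondence of Theorem~\ref{corr-ord-char}, uniqueness of unitriangular bijections), whereas (iv)(a) genuinely forces one to combine the Jordan-decomposition description of $\widetilde\Xi_B$ with the $\ell$-local information packaged in \eqref{sta-ibr} and in Hypothesis~\ref{unitri}(iii) while tracking central products. A secondary delicate point is the extendibility transfer in (ii)(b)/(iii)(b), where it is essential that the decomposition matrix be not merely triangular but \emph{uni}triangular, so that $\chi_0$ occurs in $(\chi_0')^0$ with multiplicity one.
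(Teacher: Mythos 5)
Your proposal follows essentially the same route as the paper: you verify the hypotheses of Theorem~\ref{criforlie}, transferring everything from ordinary to Brauer characters via the equivariance of the unitriangular maps (which, as you note, is forced by uniqueness of the unitriangular bijection once the basic sets are $\tG\rtimes D$-stable — this is precisely the paper's invocation of \cite[2.3]{De17}), then plug in $A(\infty)$ for (ii)--(iii) and compose $\widetilde\Theta_{q_0}^{-1}\circ\widetilde\Xi_B\circ\widetilde\Theta_q$ for (iv), with (iv)(a) reduced to stabiliser bookkeeping through (\ref{sta-irr}) and (\ref{sta-ibr}) exactly as in the paper. Two small points of comparison are worth recording. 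First, you open by disposing of exceptional covering groups via Corollary~\ref{exceptional-covering} before assuming $G=\bG^F$ is the universal cover; the paper leaves this reduction implicit, so your version is the more scrupulous one, as Theorem~\ref{criforlie} does require $G$ to be the universal covering group. Second, for the extendibility step (ii)(b) you argue Sylow-by-Sylow and then pass through projective representations over an $\ell'$-complement; the paper (via Remark~\ref{RemIBr}, which its proof cites) argues more directly at the level of Brauer characters: the Brauer character $\widetilde\chi^0$ of the ordinary extension $\widetilde\chi\in\Irr(G\rtimes D_\phi)$ of $\chi_0'$ has some irreducible constituent $\widetilde\phi$ lying over $\phi$; since $\phi$ is $G\rtimes D_\phi$-invariant and occurs with multiplicity one in $\chi_0'^0$, Clifford theory forces $\Res^{G\rtimes D_\phi}_G\widetilde\phi=\phi$. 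Both arguments hinge on the same multiplicity-one input $d_{\chi_0',\chi_0}=1$, but the paper's avoids the delicacy of ``extracting an isotypic part'' of a not necessarily semisimple module and the coboundary verification; you would want to phrase your extraction at the level of composition factors and Brauer characters rather than direct summands to make the Clifford-theoretic step airtight.
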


\begin{proof}
Keep the notation occuring in Theorem~\ref{corr-ord-char}. To verify the (iBG) condition,  we use Theorem~\ref{criforlie}.
For Hypothesis~\ref{unitri}, one can apply \cite[2.3]{De17} to both the action of $D$ and of $\Irr(\tbG^F/\bG^F)_{\ell '}$, and get that 
\begin{enumerate}[\rm(i)]
	\item $\widetilde\Theta_q(\lambda^0 \widetilde\chi)=\lambda \widetilde\Theta_q(\widetilde\chi)$ for every $\widetilde\chi\in\IBr(\tbG^F)$ and $\lambda\in \Irr(\tbG^F/\bG^F)_{\ell'}$,
	\item 	 $\widetilde \Theta_q: \IBr(\tbG^F)\to \widetilde{\mathcal B}$ is a $D$-equivariant bijection where $\widetilde{\mathcal B}=\widetilde \Theta(\IBr(\tbG^F))$,
	\item  $\Theta_q:\IBr(\bG^F)\to \Irr(\bG^F\mid \widetilde{\mathcal B})$ a $D$-equivariant bijection.
\end{enumerate}
Similar properties hold for groups $\bG^{F_1}=\mathbb G(q_0)$ and $\tbG^{F_1}$.
From this, to verify the conditions of Theorem~\ref{criforlie}, we can transfer to ordinary characters.

Condition (i)(a) of Theorem~\ref{criforlie} can be checked case by case while (i)(b) and (c) follow by Clifford theory and~\cite[Thm.~B]{Ge93} (or \cite[Thm.~1.7.15]{GM20}).
By $A(\infty)$ which is ensured by Theorem~\ref{split-char}, Hypothesis~\ref{unitri},  and the arguments of Remark \ref{RemIBr},
conditions (ii) and (iii) of Theorem~\ref{criforlie} are satisfied.
Applying Theorem~\ref{corr-ord-char} and by Hypothesis ~\ref{unitri} again,  we can deduce Theorem~\ref{criforlie}~(iv).
Here, (b) and (c) are obvious.
Condition (a) can be deduced by computing the stabilizers of the corresponding characters in the basic sets from Hypothesis ~\ref{unitri}. Analogously as in the proof of Theorem~\ref{criforlie}, the map
$\Res^{\tG/G}_{\tC G/G}:\Irr_B(\tG/G)\to\Irr(\tC G/G)$ is bijective.
Then (a) follows by~(\ref{sta-irr}), Theorem~\ref{corr-ord-char}~(ii) and a similar argument as in the proof of Theorem~\ref{criforlie}.
\end{proof}

As an application of Theorem~\ref{mainthm}, we establish the (iBG) condition for simple groups of types $\mathsf A$ and $^2\mathsf A$ now,
using the unitriangular basic sets of $\SL_n(q)$ and $\SU_n(q)$ given in~\cite{De17}.

\begin{thm}\label{for-type-A}
	The simple groups $\PSL_n(q)$ and $\PSU_n(q)$ satisfy the (iBG) condition.
\end{thm}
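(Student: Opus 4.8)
The plan is to apply Theorem~\ref{mainthm} to the groups $\bG=\SL_n$ and $\bG=\SU_n$ (i.e.\ $\bG$ of type $\mathsf A$ with the split or twisted Frobenius), once we have checked its two hypotheses: that the setup of Theorem~\ref{corr-ord-char} applies and that Hypothesis~\ref{unitri} holds for the corresponding generic group $\mathbb G$. Since $\mathbb G$ is of type $\mathsf A_{n-1}$ or ${}^2\mathsf A_{n-1}$, it is certainly not of type ${}^3\mathsf D_4$, so the hypothesis of Theorem~\ref{corr-ord-char} is satisfied automatically; the graph automorphism $\gamma$ is of order $1$ or $2$ as required. For the exceptional covering groups of $\PSL_n(q)$ and $\PSU_n(q)$ (which occur for small $n$ and $q$, e.g.\ $\SL_3(2)$, $\SL_4(2)$, $\SU_4(2)$, $\SU_6(2)$, and a few others listed in \cite[Table~6.1.3]{GLS98}) one invokes Corollary~\ref{exceptional-covering} instead, so we may assume $G=\bG^F$ is the universal covering group; this case split should be mentioned explicitly.

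The substantive work is verifying Hypothesis~\ref{unitri} for $\mathbb G$ of type $\mathsf A$ and ${}^2\mathsf A$. Here I would cite Denoncin's construction: in \cite{De17} a unitriangular basic set of $\GL_n(q)$ and $\GU_n(q)$ (playing the role of $\widetilde{\bG}^F$) is produced which is stable under field and graph automorphisms, and moreover Denoncin constructs a compatible unitriangular basic set for $\SL_n(q)$ and $\SU_n(q)$ lying under it in the precise sense of Hypothesis~\ref{unitri}(ii) — this is essentially \cite[Thm.~2.5 and its proof]{De17} together with the explicit nature of the basic set (built from unipotent characters tensored with suitable semisimple labels via Jordan decomposition, which is manifestly automorphism-equivariant). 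Condition (i) of Hypothesis~\ref{unitri} then follows because the basic set is a union of Jordan-decomposition-labelled families and $\Irr(\tbG^F/\bG^F)$ acts by translating the semisimple part $s$ by $\ZZ(\bG^*)^F$, preserving the chosen basic set on the $\ell'$-part; and condition (iii), compatibility with Frobenius endomorphisms, holds because Denoncin's basic set is defined uniformly in $q$ through the same Jordan-decomposition recipe and $\widetilde\Xi$ of Theorem~\ref{corr-ord-char} is exactly the bijection transporting one such parametrisation to the one for $\mathbb G(q_0)$ (the $F_1$-equivariance of $\Psi_{\widetilde\bG,F}$ and $\Upsilon$ recorded in that proof is what makes this work).

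With Hypothesis~\ref{unitri} in hand, Theorem~\ref{mainthm} gives that the (iBG) condition holds for $S=\bG^F/\ZZ(\bG^F)$ and every prime $\ell\nmid q$. It remains to treat the defining characteristic $\ell=p$: here we appeal to \cite[\S4]{NST17}, where the (iBG) condition for simple groups of Lie type in defining characteristic is already established (the relevant bijections come from restriction of modular representations, and $A(\infty)$ is not needed in that case). Combining the two ranges of $\ell$ and the exceptional-covering case yields the (iBG) condition for $\PSL_n(q)$ and $\PSU_n(q)$ for all primes, which is the assertion.

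\smallskip

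The main obstacle I anticipate is cleanly matching Denoncin's basic sets to the exact shape demanded by Hypothesis~\ref{unitri}: in particular Hypothesis~\ref{unitri}(iii) requires the \emph{same} family of basic sets, indexed by $q$, to be interchanged by the specific ordinary-character bijection $\widetilde\Xi_B$ of Theorem~\ref{corr-ord-char}, which is built from a chosen Jordan decomposition $\Psi_{\widetilde\bG,F}$ and the equivariant map $\Upsilon$ of \cite[Cor.~3.6]{CS19}. One must check that Denoncin's construction really is the one compatible with \emph{these} choices (or can be adjusted to be so) — this is a bookkeeping issue about compatible choices of Jordan decomposition and extension maps rather than a new idea, but it is where the proof requires care. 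A secondary, genuinely group-theoretic point is condition (i)(a) of Theorem~\ref{criforlie} and the stabiliser computation behind Theorem~\ref{criforlie}(iv)(a) for type $\mathsf A$, which, as indicated in the proof of Theorem~\ref{mainthm}, is done case by case using the known structure of $\ZZ(\bG^*)^F$, $\C_{\bG^*}(s)$ and the action of $D$ on Lusztig series; for type $\mathsf A$ this is classical (cf.\ the relative Weyl group description of $\Irr(\bG^F\mid\widetilde\chi)$) but must be recorded.
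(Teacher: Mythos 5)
Your proposal follows essentially the same route as the paper's proof: reduce to non-defining characteristic and to non-exceptional covers via \cite[Thm.~5.1]{NST17} and Corollary~\ref{exceptional-covering}, then apply Theorem~\ref{mainthm} after verifying Hypothesis~\ref{unitri} using Denoncin's basic sets for $\GL_n(\pm q)$ and $\SL_n(\pm q)$. You correctly identify the crux — checking Hypothesis~\ref{unitri}(iii), i.e.\ that the chosen basic sets are matched up by the bijection $\widetilde\Xi_B$ of Theorem~\ref{corr-ord-char} — and the paper settles this by an explicit computation with admissible symbols (the combinatorial parametrisation of $\Irr(\GL_n(\pm q))$ from \cite{KT09,De17} and the description of Denoncin's basic set in terms of partitions divided by $\ell$-powers), which you flag as the potential obstacle but do not carry out.
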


\begin{proof}
By~\cite[Thm.~5.1]{NST17}, it suffices to consider the non-defining characteristic.
Thanks to Corollary~\ref{exceptional-covering},
we assume that $\SL_n(\pm q)$ is the universal covering group of $\PSL_n(\pm q)$.
This theorem follows by Theorem~\ref{split-char} and~\ref{mainthm} while Hypothesis~\ref{unitri}  can be verified as follows.
We use the notation from Theorem~\ref{corr-ord-char} and its proof.
First, Hypothesis~\ref{unitri}~(i) and (ii) hold by the construction of basic sets in~\cite{De17} immediately.
For (iii), we should recall the details of the construction.

Let $\tG=\GL_n(\eps q)$, where $\eps=\pm 1$. As usual, $\GL_n(-q)$ denotes the general unitary group.
The parametrization of $\IBr(\tG)$ is given in~\cite{KT09} for $\eps=1$, called the admissible symbols, while the case  $\eps=-1$ was considered in~\cite{De17}.
Note that an exposition can be found in~\cite[Rmk.~3.4]{Fe19}.
We will use the admissible symbols to parametrize the irreducible characters and then give a  parametrization for the basic set consructed in~\cite{De17}.

For a partition $\mu=(\mu_1,\mu_2,\ldots)$, denote $|\mu|=\mu_1+\mu_2+\cdots$ and write $\mu'$ for the transposed partition.
Set $\Delta(\mu)=\mathrm{gcd}(\mu_1,\mu_2,\ldots)$.	
Let  $h\mid \Delta(\mu')$.
We rewrite $\mu_i=(\mu_1^{t_1},\mu_2^{t_2},\ldots)$ and then
we set $\mu/h=(\mu_1^{t_1/h},\mu_2^{t_2/h},\ldots)$.

For $\sigma\in\overline{\F}_p^\times$, 
we set $$[\sigma]_{\eps q}=\{\ \sigma, \sigma^{\epsilon q}, \sigma^{(\epsilon q)^2},\ldots, \sigma^{(\epsilon q)^{\mathrm{deg}_{\eps q}(\sigma)-1}}    \ \},$$
where $\mathrm{deg}_{\eps q}(\sigma)$ is the minimal integer $d$ such that $\sigma^{(\epsilon q)^d-1}=1$.

An \emph{$(n,\eps q)$-admissible tuple} is a tuple 
\begin{equation}\label{admtup}
(([\sigma_1]_{\eps q},\mu^{(1)}),\dots,([\sigma_a]_{\eps q},\mu^{(a)}))
\addtocounter{thm}{1}\tag{\thethm}
\end{equation}
 of pairs, where $a\geq 1$, $\sigma_1,\dots,\sigma_a\in \overline{\F}_p^\times$, and $\mu^{(1)}, \dots, \mu^{(a)}$ are partitions such that
\begin{itemize}
	\item $[\sigma_i]_{\eps q}\ne [\sigma_j]_{\eps q}$ for all $i\ne j$, and 
	\item $\sum\limits^{a}_{i=1}\mathrm{deg}_{\eps q}(\sigma_i)|\mu^{(i)}|=n$.
\end{itemize} 	
	
The equivalence class of an $(n,\eps q)$-admissible tuple (\ref{admtup}) up to permutations of the pairs $$([\sigma_1]_{\eps q},\mu^{(1)}),\ldots,([\sigma_a]_{\eps q},\mu^{(a)})$$ is called an \emph{$(n,\eps q)$-admissible symbol} and is denoted as 
\begin{equation}\label{admsym}
\mathfrak s=[([\sigma_1]_{\eps q},\mu^{(1)}),\dots,([\sigma_a]_{\eps q},\mu^{(a)})].
\addtocounter{thm}{1}\tag{\thethm}
\end{equation} This is clearly in bijection with $\tG$-classes of pairs $(s,\phi)$ where $s\in\tG_{\rm{ss}}$ and $\phi\in\mathcal{E}(\mathrm{C}_{\tG}(s),1)$ thanks to the parametrization of unipotent characters of GL$_d(\pm q)$ by partitions of $d$. Then Jordan decomposition of characters as recalled in the proof of Theorem~\ref{corr-ord-char} implies that
 $(n,\eps q)$-admissible symbols is a labeling set for $\Irr(\tG)$.
Denote by $\chi^{\tG}_{\mathfrak s}$ the irreducible  character corresponding to the $(n,\eps q)$-admissible symbol $\mathfrak s$.
	
For the $(n,\eps q)$-admissible symbol (\ref{admsym}) we define 
$$\mathfrak s^k=[([\sigma_1^k]_{\eps q},\mu^{(1)}),\dots,([\sigma_a^k]_{\eps q},\mu^{(a)})],$$
if $k=-1$ or $k$ is a power of $p$.
Thus by~\cite[Prop.~3.5]{De17},  $(\chi^{\tG}_{\mathfrak s})^{F_0}=\chi^{\tG}_{\mathfrak s^p}$ and $\chi^{\tG}_{\mathfrak s^{-1}}$ is the image of the character  $\chi^{\tG}_{\mathfrak s}$ under a certain graph automorphism. Moreover each linear character of $\tG/G$ acts by the associated scalar multiplication on the $\sigma_i$'s.

Let $B=\langle F_0^e\rangle$ such that $\gcd(|B|,|G|)=1$.
We let $q_0$ and $F_1$ be as in the proof of Theorem~\ref{corr-ord-char}.
Then $\tC=\GL_n(\eps q_0)$.
In addition, $\chi^{\tG}_{\mathfrak s}$ is $B$-invariant if and only if it is $F_1$-invariant,
and if and only if for any  $1\le i \le a$, there exists some $1\le j \le a$ such that $\sigma_i^{\eps q_0}=\sigma_j$ and $\mu^{(i)}=\mu^{(j)}$. 
From this, if $\chi^{\tG}_{\mathfrak s}$ is $B$-invariant, one may assume that
$\mathfrak s$ is the class of the $(n,\eps q)$-admissible symbol of pairs 
$([\sigma_1]_{\eps q},\mu^{(1)}),$
$([\sigma_1^{\eps q_0}]_{\eps q},\mu^{(1)}),\dots,$
$([\sigma_1^{(\eps q_0)^{l_1}}]_{\eps q},\mu^{(1)}),$ $\dots,$
$([\sigma_s]_{\eps q},\mu^{(s)}),$ 
$([\sigma_s^{\eps q_0}]_{\eps q},\mu^{(s)}),\dots,$
$([\sigma_s^{(\eps q_0)^{l_s}}]_{\eps q},$ $\mu^{(s)})$
for some integers $s,l_1,\ldots,l_s$ all $\geq 1$.
Then by the construction of $\Xi$ in the proof of Theorem~\ref{corr-ord-char},
$\Xi_B(\chi^{\tG}_{\mathfrak s})=\chi^{\tC}_{\mathfrak s_0}$, where
$\mathfrak s_0$ is the $(n,\eps q_0)$-admissible symbol of pairs
$([\sigma_1]_{\eps q_0},\mu^{(1)}),$
$\dots,$
$([\sigma_s]_{\eps q_0},\mu^{(s)})$.

Now we describe the basic set constructed in~\cite[\S4.4]{De17} in our notation above.
Let $\widetilde{\mathcal E}_{\eps q}$ be the set consisting of $(n,\eps q)$-admissible symbols~(\ref{admsym}) such that $\sigma_1$, $\dots$, $\sigma_a$ are $\ell'$-elements.
For $\mathfrak s\in \widetilde{\mathcal E}_{\eps q}$ we go through the following steps
\begin{itemize}
\item if $\ell^d=\gcd\big(q-\eps, \Delta((\mu^{(1)})'),\ldots, \Delta((\mu^{(a)})')\big)_\ell\ne 1$ then take $w$ to be an element in $\overline{\F}_p^\times$ having order $\ell^d$,
\item let $\mathfrak s'$ be the $(n,\eps q)$-admissible symbol of pairs
$([\sigma_1]_{\eps q},\mu^{(1)}/\ell^d),$
$([\sigma_1 w]_{\eps q},\mu^{(1)}/\ell^d),\dots,$
$([\sigma_1 w^{\ell^a-1}]_{\eps q},\mu^{(1)}/\ell^d),$ $\dots,$
$([\sigma_a]_{\eps q},\mu^{(a)}/\ell^d),$ 
$([\sigma_a w]_{\eps q},\mu^{(a)}/\ell^d),\dots,$
$([\sigma_a w^{\ell^a-1}]_{\eps q},$ $\mu^{(a)}/\ell^d)$, and
\item replace $\mathfrak s$ by $\mathfrak s'$ in $\widetilde{\mathcal E}_{\eps q}$.
\end{itemize}
From this we obtain a new set $\widetilde{\mathcal E}'_{\eps q}$ consisting of $(n,\eps q)$-admissible symbols.
Then $\widetilde{\mathcal I}'_{\eps q}:=\{\chi^{\tG}_{\mathfrak s}\mid \mathfrak s\in\widetilde{\mathcal E}'_{\eps q}\}$ forms a basic set for $\tG$ with a unitriangular decomposition matrix while ${\mathcal I}'_{\eps q}:=\Irr(G\mid \widetilde{\mathcal I}'_{\eps q})$ has the same properties for $G$. We then get bijections $\widetilde{\Theta}_{ q} \colon \IBr(\tG)\to \widetilde{\mathcal I}'_{\eps q}$ and ${\Theta}_{ q} \colon \IBr(G)\to {\mathcal I}'_{\eps q}$. Now Hypothesis~\ref{unitri}(i), as a property of $ \widetilde{\mathcal I}'_{\eps q}$ is an easy consequence of what we have recalled about action of linear characters and automorphisms, while Hypothesis~\ref{unitri}(ii) is the main unitriangularity property of~\cite{De17}.
Finally, $\Xi_B((\widetilde{\mathcal I}'_{\eps q})^B)=\widetilde{\mathcal I}'_{\eps q_0}$ can be checked directly and this is  Hypothesis~\ref{unitri}~(iii).
Thus we have completed the proof.
\end{proof}

We can now restate Theorem~\ref{main-thm1} as follows.

\begin{thm}
	If Hypothesis~\ref{unitri} holds for types ${\mathsf B}_n$, ${\mathsf C}_n$, ${\mathsf D}_n$, ${}^2 {\mathsf D}_n$, ${\mathsf E}_6$, ${}^2 {\mathsf E}_6$ and ${\mathsf E}_7$ whenever $\ell\nmid q$,
	then Conjecture~\ref{conj-corr} is true.
\end{thm}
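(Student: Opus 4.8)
The plan is to combine the reduction theorem of Sp\"ath--Vallejo with the classification of finite simple groups, and then to dispatch the resulting verifications of the (iBG) condition type by type using the results assembled above. By Theorem~\ref{main-thm-SV16} it suffices to show that every finite non-abelian simple group $S$ satisfies the (iBG) condition for every prime $\ell$ dividing $|S|$. By~\cite{NST17} this already holds for all simple groups not of Lie type, for simple groups of Lie type in defining characteristic, and for simple groups with cyclic outer automorphism group; for the last family one uses that a subgroup $B\le\Aut(G)$ with $\gcd(|B|,|G|)=1$ meets $\mathrm{Inn}(G)$ trivially, hence embeds in $\mathrm{Out}(G)$ and is cyclic, which is what makes that verification tractable. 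So only simple groups of Lie type in non-defining characteristic with non-cyclic outer automorphism group remain to be treated.

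Next I would run through those remaining types. Inspecting the outer automorphism groups of the finite simple groups of Lie type, the ones with non-cyclic outer automorphism group are exactly those of type $\mathsf{A}_n$, ${}^2\mathsf{A}_n$, $\mathsf{B}_n$, $\mathsf{C}_n$, $\mathsf{D}_n$, ${}^2\mathsf{D}_n$, $\mathsf{E}_6$, ${}^2\mathsf{E}_6$ and $\mathsf{E}_7$; in particular the Suzuki and Ree groups ${}^2\mathsf{B}_2$, ${}^2\mathsf{G}_2$, ${}^2\mathsf{F}_4$, the triality groups ${}^3\mathsf{D}_4$, and the groups of type $\mathsf{G}_2$, $\mathsf{F}_4$ and $\mathsf{E}_8$ all have cyclic outer automorphism group (even when an exceptional graph automorphism is present in characteristic $2$ or $3$), so they are already covered. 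Types $\mathsf{A}$ and ${}^2\mathsf{A}$ are settled by Theorem~\ref{for-type-A} and need no further input. For each of the seven remaining types, writing $S=\bG^F/\ZZ(\bG^F)$ with $\bG$ simple simply connected, there are two cases: if $S$ has an exceptional Schur multiplier (i.e.\ $\bG^F$ is not the universal covering group of $S$) the (iBG) condition follows from Corollary~\ref{exceptional-covering}; otherwise $\bG^F$ may be taken to be that universal covering group.

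For the generic case I would simply invoke Theorem~\ref{mainthm}: its hypotheses are the $A(\infty)$ condition, which holds by Theorem~\ref{split-char}; the setup of Theorem~\ref{corr-ord-char}, which is available precisely because none of $\mathsf{B}_n$, $\mathsf{C}_n$, $\mathsf{D}_n$, ${}^2\mathsf{D}_n$, $\mathsf{E}_6$, ${}^2\mathsf{E}_6$, $\mathsf{E}_7$ is of type ${}^3\mathsf{D}_4$ (the coprimality $\gcd(|\tG|,|B|)=1$ following from $\gcd(|G|,|B|)=1$ since $|\tG|$ and $|G|$ have the same prime divisors, as one checks case by case); and Hypothesis~\ref{unitri} for the relevant complete root datum $\mathbb{G}$, which is exactly what is being assumed. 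Theorem~\ref{mainthm} then gives the (iBG) condition for $S$ and all primes $\ell\nmid q$, while the only remaining prime, $\ell=p$, is the defining-characteristic case already handled by~\cite{NST17}. Collecting all cases, every finite non-abelian simple group satisfies the (iBG) condition for every relevant prime, and so Conjecture~\ref{conj-corr} follows from Theorem~\ref{main-thm-SV16}.

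The main obstacle is the classification bookkeeping of the second step: one must be confident that the list of simple groups of Lie type with non-cyclic outer automorphism group is exactly $\mathsf{A}$, ${}^2\mathsf{A}$ together with the seven types in the statement, so that nothing slips through the net; in particular the exceptional graph automorphisms of $\mathsf{G}_2$, $\mathsf{F}_4$, ${}^2\mathsf{B}_2$ and ${}^2\mathsf{F}_4$ must be checked not to enlarge the corresponding outer automorphism groups beyond being cyclic, and the finitely many exceptional covering groups, together with the small groups where $\bG^F/\ZZ(\bG^F)$ fails to be simple, must be absorbed into Corollary~\ref{exceptional-covering} and~\cite{NST17}. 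Everything else is a matter of quoting Theorems~\ref{for-type-A}, \ref{mainthm}, \ref{split-char} and Corollary~\ref{exceptional-covering}.
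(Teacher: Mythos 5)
Your proposal is correct and follows essentially the same route as the paper: reduce via Theorem~\ref{main-thm-SV16} to verifying (iBG) for each simple group, quote \cite{NST17} for non-Lie-type, defining-characteristic, and cyclic-outer-automorphism-group cases, quote Theorem~\ref{for-type-A} for types $\mathsf A$ and ${}^2\mathsf A$, dispose of exceptional covers by Corollary~\ref{exceptional-covering}, and invoke Theorem~\ref{mainthm} (via Theorems~\ref{split-char} and~\ref{corr-ord-char}) for the remaining seven families under Hypothesis~\ref{unitri}. The only difference is cosmetic: you spell out the classification bookkeeping (which types have non-cyclic outer automorphism group, why ${}^3\mathsf D_4$ and the very twisted types are absent) that the paper compresses into a reference to \cite[Thm.~2.5.12]{GLS98}.
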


\begin{proof}
	It was proved  that the  (iBG)  condition holds for every finite simple group with cyclic outer automorphism group by~\cite[Thm.~4.4]{NST17},
	for every finite simple group not of Lie type by~\cite[Cor.~4.6 and Prop.~4.7]{NST17}, and for every finite simple group of Lie type in defining characteristic by~\cite[Thm.~5.1]{NST17}.
	By Theorem~\ref{for-type-A} and the structure of the automorphism groups of finite simple groups (see, for example,~\cite[Thm.~2.5.12]{GLS98}),  it remains to consider the simple groups of Lie types  ${\mathsf B}_n$, ${\mathsf C}_n$, ${\mathsf D}_n$, ${}^2 {\mathsf D}_n$, ${\mathsf E}_6$, ${}^2 {\mathsf E}_6$ and ${\mathsf E}_7$ in non-defining characteristic.
	Also, the exceptional covering groups are verified to satisfy the (iBG) condition by Corollary~\ref{exceptional-covering}.  
	
	Thanks to Theorem~\ref{mainthm}, we can apply Theorem~\ref{main-thm-SV16} now and this implies our
	theorem.
\end{proof}

\section{More results on the inductive Brauer--Glauberman condition}\label{application}

First we give an application of Theorem~\ref{criforlie}.

\begin{thm}\label{typeC-2}
	The (iBG) condition holds for simple groups $\mathrm{PSp}_{2n}(q)$ ($n\geq 2$) and the prime~2.
\end{thm}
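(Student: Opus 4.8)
The plan is to reduce the statement, through Theorem~\ref{mainthm} and hence the criterion Theorem~\ref{criforlie}, to verifying Hypothesis~\ref{unitri} for type $\mathsf C_n$ at the prime $\ell=2$.

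If $q$ is even then $\ell=2=p$ is the defining characteristic and the assertion is contained in~\cite[Thm.~5.1]{NST17}; so assume $q$ is odd. By Corollary~\ref{exceptional-covering} we may take the universal covering group to be $G=\Sp_{2n}(q)$, with regular embedding into $\widetilde G=\CSp_{2n}(q)$, so $\widetilde G/G\cong\F_q^\times$ is cyclic. Since $\bG$ is of type $\mathsf C_n$, for $q$ odd it carries no graph automorphism, so $F=F_0^f$; setting $D:=\langle F_0\rangle$, the group $\widetilde G\rtimes D$ is defined, induces all automorphisms of $G$, and $\C_{\widetilde G\rtimes D}(G)=\ZZ(\widetilde G)$. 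As $\ell=2$ divides $|G|$, every $B\le\Aut(G)$ with $\gcd(|G|,|B|)=1$ has odd order and is, up to conjugacy, a subgroup $\langle F_0^e\rangle$ of field automorphisms; moreover $\gcd(|B|,|\widetilde G|)=1$ since $\widetilde G$ and $G$ have the same prime divisors, while $C=\C_G(B)=\Sp_{2n}(q_0)$ and $\widetilde C=\C_{\widetilde G}(B)=\CSp_{2n}(q_0)$ over the fixed subfield $\F_{q_0}$. Hence the running hypotheses of Theorems~\ref{corr-ord-char} and~\ref{mainthm} hold ($\mathsf C_n\neq{}^3\mathsf D_4$), and by Theorem~\ref{mainthm} it suffices to verify Hypothesis~\ref{unitri} for $\mathbb G$ of type $\mathsf C_n$ and $\ell=2$.

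This requires unitriangular maps $\widetilde\Theta_q:\IBr(\widetilde G)\to\Irr(\widetilde G)$ and $\Theta_q:\IBr(G)\to\Irr(G)$ whose images are basic sets, compatible with restriction along $G\unlhd\widetilde G$, stable under $D$ and under multiplication by the odd-order linear characters in $\Irr(\widetilde G/G)_{2'}$, and matched by the ordinary-character bijection $\widetilde\Xi$ of Theorem~\ref{corr-ord-char} under passage to a subfield. The key structural point is that for $\ell=2$ and type $\mathsf C$ the only block carrying a quasi-isolated semisimple label is the unipotent one: an isolated element of the dual group $\mathrm{SO}_{2n+1}$ has centraliser of type $\mathsf D_k\times\mathsf B_{n-k}$ and hence is an involution, while a $2'$-element has connected centraliser equal to a proper Levi subgroup. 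Consequently, by Bonnaf\'e--Dat--Rouquier~\cite[Thm.~1.1]{BDR17}, every non-unipotent $2$-block of $G$ (and likewise of $\widetilde G$) is Morita equivalent, over a suitable local ring, to a unipotent block of a proper Levi subgroup, namely a product of general linear and unitary groups together with a single classical group of type $\mathsf B$ or $\mathsf C$ of strictly smaller rank. Granting the unitriangularity of the $2$-decomposition matrices of the unipotent blocks of all the relevant classical groups (the general linear and unitary cases being classical), and using that such Morita equivalences preserve decomposition matrices and transport basic sets, every $2$-block of $G$ and of $\widetilde G$ acquires a unitriangular basic set --- the unipotent one directly, the others via the Morita equivalences --- which one assembles into $\Theta_q$ and $\widetilde\Theta_q$. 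One then records that multiplication by a linear character of $\widetilde G/G$ merely translates the semisimple label while $F_0$ acts on the parametrising data, so that the basic sets can be chosen $D$- and $\Irr(\widetilde G/G)_{2'}$-stable and carried by $\widetilde\Xi$ onto the data over $\F_{q_0}$, and that $\Res^{\widetilde G}_G$ is multiplicity-free by~\cite[Thm.~1.7.15]{GM20}, which yields Hypothesis~\ref{unitri}(ii). The remaining conditions entering through Theorem~\ref{mainthm} then follow formally from $A(\infty)$ (Theorem~\ref{split-char}) and Theorem~\ref{corr-ord-char}.

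The main obstacle is the content of the preceding paragraph: at the \emph{bad} prime $\ell=2$ for type $\mathsf C$ one must produce basic sets of $\IBr(\Sp_{2n}(q))$ and $\IBr(\CSp_{2n}(q))$ that are simultaneously unitriangular, $D$-stable, stable under multiplication by $\Irr(\widetilde G/G)_{2'}$, compatible along $G\unlhd\widetilde G$, and matched with $\widetilde\Xi$ under restriction to a subfield. This rests on the unitriangularity of the $2$-decomposition matrices of the unipotent blocks of the classical groups appearing above, and on reconciling all these equivariance requirements simultaneously; once that is available, the rest is the formal machinery already assembled in Theorems~\ref{mainthm} and~\ref{criforlie}.
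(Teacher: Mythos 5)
Your outline correctly identifies the reduction machinery (criterion Theorem~\ref{criforlie}, here reached via Theorem~\ref{mainthm} and Hypothesis~\ref{unitri}), but it stops at exactly the point where the theorem needs substance: you explicitly ``grant'' the unitriangularity of the $2$-decomposition matrices of the relevant unipotent blocks at the bad prime $2$ for type~$\mathsf C$, together with all the equivariance and compatibility requirements. That is the content of the theorem, not an input that can be assumed. Since $\ell=2$ is bad for~$\mathsf C$, the result of Brunat--Dudas--Taylor does not apply to the unipotent $2$-block of $\Sp_{2n}(q)$, and neither do Theorem~\ref{thm_appl} or Proposition~\ref{appl_C}~(i), both of which require $\ell$ good (or odd). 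The paper fills this gap by citing Chaneb~\cite{Ch21}, who established the unitriangular basic set for the unipotent $2$-block of $\Sp_{2n}(q)$ precisely in this bad-characteristic setting, and then relies on Feng--Malle~\cite{FM20}, which builds on Chaneb's result to produce an explicit combinatorial labelling of $\IBr(\Sp_{2n}(q))$ and $\IBr(\CSp_{2n}(q))$ (via elementary divisors and the sets $\scU(n)$, $\scU_1(n)$, $\scU_2(n)$, etc.), together with the actions of $\Aut(G)$ and of linear characters, and the behaviour under the regular embedding.

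Beyond the gap, the paper also takes a somewhat different route from the one you sketch: instead of feeding the data into Hypothesis~\ref{unitri} and Theorem~\ref{mainthm}, it verifies the four conditions of Theorem~\ref{criforlie} directly, using~\cite[Cor.~4.6]{FM20} for conditions~(ii) and~(iii), and the combinatorial parametrisation (action of $F_0$ and of $\Irr(\tG/G)$ on the labels) to deduce the equivariance and stabiliser statements for~(iv), in the same spirit as the proof of Theorem~\ref{for-type-A}. Your detour through BDR Morita equivalences and transport of basic sets is not false in principle, but it would require reconstructing essentially what~\cite{FM20} already provides; to turn your proposal into a proof you should replace the ``granted'' paragraph by an appeal to~\cite{Ch21} for the unipotent $2$-block of~$\Sp_{2n}(q)$ and to~\cite{FM20} for the global labelling with its automorphism and linear-character actions, and then either check Hypothesis~\ref{unitri} from that labelling or, more efficiently, verify Theorem~\ref{criforlie} directly as the paper does.
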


\begin{proof}
	Thanks to~\cite[Thm.~5.1]{NST17}, we can assume $q$ to be odd.
Let $S=\mathrm{PSp}_{2n}(q)$.	
Let $\bG=\Sp_{2n}(\barF_q)$ and $\tbG=\CSp_{2n}(\barF_q)$.
Keep the notation preceding Theorem \ref{corr-ord-char}.
	Let $q=p^f$ and we take $F=F_0^f$, then $G=\bG^F=\Sp_{2n}(q)$ and $\tG=\tbG^F=\CSp_{2n}(q)$.
	Also, $G$ is the universal covering group of $S$.
	Chaneb~\cite{Ch21} has proved that there is a unitriangular basic set for the
	unipotent~2-block (principal) of $G$.
Based on this result, the paper~\cite{FM20} gives a labelling
	set for $\IBr(G)$, together with the action of automorphisms. 
	Now we recall that labelling set.
	
	Let $\cF$ be the set defined in~\cite[\S1]{FS89} consisting
	of the polynomials serving as elementary divisors for all semisimple elements of $G^*$. Note that $G^*=\mathrm{SO}_{2n+1}(q)$.
	For $\Gamma\in\cF$, let $\delta_\Gamma=1$ if $\Gamma=x-1$ or $x+1$, and $\delta_\Gamma$ be the half of the degree of $\Gamma$ if $\Gamma\notin\{x-1,x+1\}$. Also, we define $\vare_\Gamma$ as in~\cite[(1.9)]{FS89}.
	For a semisimple element $s$ of $G^*$, if $\Gamma\in \cF$ is an elementary divisor, then we denote by $m_\Gamma(s)$ the multiplicity of $\Gamma$ in $s$. For convenience, if $\Gamma\in\cF$ is not an elementary divisor of $s$, then we let $m_\Gamma(s)=0$.
	If $s$ is of $2'$-order, 
	we also let $w_{x-1}(s)$ be the integer such that $m_{x-1}(s)=2 w_{x-1}(x)+1$.
	Then
	$\C_{G^*}(s)^*\cong \Sp_{2w_{x-1}(s)}(q)\times \prod_{\Gamma} \GL_{m_{\Gamma}(s)}(\vare_\Gamma q^{\delta_\Gamma})$, where $\Gamma$ runs through the elementary divisors of $s$ in $\cF\setminus \{ x-1,x+1\}$.
	By~\cite[3.1]{FM20}, for every semisimple $2'$-element $\tilde s\in\tG^*$ ($s\in G^*$), $\cE_2(\tG,\tilde s)$ (resp. $\cE_2(G,s)$) is a 2-block of $\tG$ (resp. $G$).

	We recall the parametrization from~\cite{FM20}  on the unipotent Brauer characters
	of $G$ as follows. Let $\scU(n)$ be the set of maps
	$\bm:\Z_{\ge 1}\to \Z_{\ge 0}$ such that $\sum_{j\ge 1} j\bm(j)=2n$ and
	$\bm(j)$ is even if $j$ is odd. Here, $\bm$ is counted $2^{k_{\bm}}$-times,
	where $k_{\bm}=|\{ j\mid j\text{ is even and }\bm(j)\ne 0 \}|$.
Then 
$\scU(n)$ is a parametrization of unipotent classes of $G$.
By \cite[Prop.~4.2]{FM20}, there is an $\Aut(G)$-equivariant bijection between the  unipotent classes of $G$ and $\IBr(\cE_2(G,1))$.
So $\scU(n)$ is a labelling set for  $\IBr(\cE_2(G,1))$.
	Let $\scU_1(n)$ be the subset of $\scU(n)$ consisting of those $\bm$ such that
	$\bm(j)$ is even for all $j$. Again, an element $\bm\in\scU_1(n)$ is counted
	$2^{k_{\bm}}$-times in $\scU_1(n)$. 
	Let $\scU_2(n)=\scU(n)\setminus\scU_1(n)$.
	We let $\mathscr P(n)$ be the set of partitions of $n$.
	For a semisimple $2'$-element $s$ of $G^*$, we define 
	$\Upsilon(s):=\scU(w_{x-1}(s))\times\prod\limits_{\Gamma\in \cF\setminus \{ x-1,x+1\}}\mathscr P(m_{\Gamma}(s))$.
	Then $\Upsilon(s)$ is a labelling set for the irreducible 2-Brauer characters in the unipotent block of $\C_{G^*}(s)^*$, and then by~\cite[Prop. 4.5]{FM20},
	a labelling set for $\IBr(\cE_2(G,s))$.
	Let $\Upsilon(G)$ be the set of $G^*$-conjugacy classes of pairs $(s,\mu)$, where $s$ is a semisimple $2'$-element of $G^*$ and $\mu\in\Upsilon(s)$.
	Then $\Upsilon(G)$
	is a labelling set for $\IBr(G)$.
		
	The actions of automorphisms on $\IBr(G)$ are given in~\cite[Cor.~4.3 and Prop.~4.5]{FM20} and we recall them as follows.
	Let $\Upsilon_1(s)=\scU_1(w_{x-1}(s))\times\prod\limits_{\Gamma\in \cF\setminus \{ x-1,x+1\}}\mathscr P(m_{\Gamma}(s))$
	be the subset of $\Upsilon(s)$.
	Then a Brauer character $\chi\in\IBr(G)$ corresponding to $(s,\mu)\in \Upsilon(G)$ is $\tG$-invariant if and only if 
	$\mu\in\Upsilon_1(s)$.
	If $\mu\notin\Upsilon_1(s)$ and  $\chi^g$  corresponds to $(s',\mu')$, then $s'$ is $G^*$-conjugate to $s$, $s'_\Gamma=s_\Gamma$ for $\Gamma\ne x-1$ and $\mu_{x-1}$ and $\mu'_{x-1}$ are labelled by the same element in $\scU_2(w_{x-1}(s))$.
	Define $F_0^*$ as in~\cite[Def.~2.1]{CS13}. It is a field automorphism of $\tG^*$. 
	We note that ${F_0^*}^{-1}$ acts on $\cF$, see for instance~\cite[Prop.~5.5]{FM20}.
	Thus $\chi^{F_0}$ corresponds to $(s',\mu')$, where $s'={F_0^*}^{-1}(s)$ and $\mu'_{{F_0^*}^{-1}(\Gamma)}=\mu_{\Gamma}$.
	We denote by ${F_0^*}^{-1}(\mu)$ this $\mu'$. 
	Thus the action of $F_0$ on $\IBr(G)$ is induced by the action 
	on elementary divisors.
	
	We write $\pi:\tbG^*\to\bG^*$ for the surjection induced by the regular
	embedding $\bG\hookrightarrow\tbG$. Here, $\tG^* = (\tbG^*)^F$ is a special
	Clifford group over $\F_q$.
	Let $\tscU(n)$, $\tscU_1(n)$, $\tscU_2(n)$ be the same as $\scU(n)$ $\scU_1(n)$, $\scU_2(n)$ when regarded as the set of maps.
	In $\tscU(n)$, an element $\bm$ is counted $2^{k_\bm}$-times if $\bm\in \tscU_1(n)$ and  $\bm$ is counted $2^{k_\bm-1}$-times if $\bm\in \tscU_2(n)$.
	For a semisimple $2'$-element $s$ of $G^*$, we define 
	$\widetilde{\Upsilon}(s):=\tscU(w_{x-1}(s))\times\prod\limits_{\Gamma\in \cF\setminus \{ x-1,x+1\}}\mathscr P(m_{\Gamma}(\pi(s)))$.
	By the proof of~\cite[Prop.~3.4]{FM20}, $\widetilde{\Upsilon}(s)$ is the labelling set for $\IBr(\cE_2(\tG,\tilde s))$.
	Let $\Upsilon(\tG)$ be the set of $\tG^*$-conjugacy classes of pairs $(\ts,\mu)$, where $\ts$ is a semisimple $2'$-element of $\tG^*$ and $\mu\in\widetilde{\Upsilon}(\pi(\ts))$.
	Then $\Upsilon(\tG)$
	is a labelling set for $\IBr(\tG)$.
	
	If $z\in\ZZ(\tG^*)_{2'}$, then we may regard $\hat z$ as a linear Brauer character of $\tG$.
	If $\widetilde\chi\in\IBr(\tG)$ corresponds to  $(\ts,\mu)$, then
	by the proof of~\cite[Prop.~3.4]{FM20}, 
	$\hat z \widetilde\chi$ corresponds to $(z\ts,\mu)$ and 
	$\widetilde\chi^{F_0}$ corresponds to $({F_0^*}^{-1}(\ts),{F_0^*}^{-1}(\mu))$; i.e., the action of $F_0$ on $\IBr(\tG)$ is induced by the action 
	on elementary divisors.
	
	Let $B=\langle F_1 \rangle$ with $F_1=F_0^e$ and $q_0=p^e$.
	Then $\tC=\tbG^{F_1}= \CSp_{2n}(q_0)$ and $C=\bG^{F_1}=\Sp_{2n}(q_0)$.
	The conditions (ii) and (iii) of Theorem~\ref{criforlie} follow by~\cite[Cor.~4.6]{FM20}, while (i) is obvious.
	Now we consider (iv).
	As in the proof of Theorem~\ref{corr-ord-char}, the $F_1$-stable conjugacy classes of  semisimple elements of $\tG$ are in bijection with the conjugacy classes of semisimple elements of $\tC$.
Using the above combinatorial desciption, 
	the proof is similar to the one of Theorem~\ref{for-type-A}.
\end{proof}

Now we consider good primes for groups of Lie type and define the following hypotheses which are stronger than Hypothesis~\ref{unitri}.

\begin{hyp}\label{unitri-good}
Let	$\bG$ be a simply-connected simple algebraic group in characteristic $p$
	and $F:\bG\to \bG$ a Frobenius  endomorphism
	endowing $\bG$ with an $\F_{q}$-structure so that $\mathbb G$ is also the complete root datum of $(\bG,F)$.	Let $\ell$ be a prime good for $\mathbf G$, $\ell\ne p$ and not dividing the order of 	$(\ZZ(\bG)/\ZZ^\circ(\bG))^F$.
\begin{enumerate}[\rm(i)]
\item 	With a suitable ordering, the decomposition matrix  of $\bG^{F}$ 
	associated with the basic set $\mathcal E(\bG^{F}, \ell')$  is unitriangular.
\item 	With a suitable ordering, the decomposition matrix  of $\widetilde \bG^{F}$ 
associated with the basic set $\mathcal E(\widetilde \bG^{F}, \ell')$  is unitriangular.
\end{enumerate}
\end{hyp}

Note that in Hypothesis~\ref{unitri-good}, by a theorem of Geck--Hiss the union of series corresponding to $\ell$-regular semi-simple elements $\mathcal E(\bG^{F}, \ell')$ (resp. $\mathcal E(\widetilde \bG^{F}, \ell')$) is a basic set of $\bG^F$ (resp. $\tbG^F$); see \cite[Thm.~14.4]{CE04}.

We have

\begin{prop}\label{prop-equiva}
	Hypothesis~\ref{unitri-good} (i) is equivalent to Hypothesis~\ref{unitri-good} (ii).
\end{prop}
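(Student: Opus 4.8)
The plan is to relate the decomposition matrices of $\bG^F$ and $\tbG^F$ via the regular embedding $\bG\hookrightarrow\tbG$ and the corresponding quotient of rational points. Set $G=\bG^F$, $\tG=\tbG^F$. Since $\tbG/\bG$ is a torus, the quotient $\tG/G$ is abelian; moreover, as recalled just before Theorem~\ref{mainthm} (using \cite[Thm.~1.7.15]{GM20}), $\Res^{\tG}_G(\widetilde\chi)$ is multiplicity-free for every $\widetilde\chi\in\Irr(\tG)\cup\IBr(\tG)$, so every element of $\Irr(G)\cup\IBr(G)$ extends to its stabilizer in $\tG$. The basic sets in play are compatible under this inclusion: by the theory of Jordan decomposition and Bonnafé--Rouquier restriction, $\mathcal E(G,\ell')=\{\theta\in\Irr(G)\mid\theta\in\Irr(G\mid\widetilde\chi)\text{ for some }\widetilde\chi\in\mathcal E(\tG,\ell')\}=\Irr(G\mid\mathcal E(\tG,\ell'))$, and dually $\mathcal E(\tG,\ell')$ is the union of the $\Irr(\tG/G)$-orbits meeting $\Irr(\tG\mid\mathcal E(G,\ell'))$. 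Both basic sets are $\tG$-stable (indeed $\Aut(\tG)$-stable).

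For the direction Hypothesis~\ref{unitri-good}(i) $\Rightarrow$ (ii): this is exactly the setting of Theorem~\ref{TriangUp}, applied with $N=G$, $\tG$ playing the role of $G$ in that theorem, $\cB=\mathcal E(G,\ell')$ and $\cC=\IBr(G)$. The hypotheses of Theorem~\ref{TriangUp} are met — $\tG/G$ abelian, $\cB$ and $\cC$ both $\tG$-stable, $\Dec_{\cB,\cC}$ unitriangular by assumption, and every element of $\cB$ extends to its stabilizer in $\tG$ as noted above. The theorem then produces $\tcB\subseteq\Irr(\tG\mid\cB)$ with $\Dec_{\tcB,\IBr(\tG)}$ unitriangular, where $\IBr(\tG)=\IBr(\tG\mid\IBr(G))$ since $G\unlhd\tG$ with $\tG/G$ abelian and every Brauer character of $G$ extends to its stabilizer. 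One must then check that the resulting $\tcB$ is precisely $\mathcal E(\tG,\ell')$; the size count $|\mathcal E(\tG,\ell')|=|\IBr(\tG)|$ holds (both being controlled by $\ell'$-semisimple classes and unipotent Brauer characters of centralizers), and $\tcB\subseteq\Irr(\tG\mid\cB)\subseteq\mathcal E(\tG,\ell')$ by the compatibility of the two basic sets recalled above, so equality of cardinalities forces $\tcB=\mathcal E(\tG,\ell')$.

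For the converse (ii) $\Rightarrow$ (i): here I would invoke Denoncin's ``going-down'' statement, cited as \cite[Thm.~2.5]{De17} in Remark~\ref{remTri}(i). The quotient $\tG/G$ is abelian but need not be cyclic; however, since $\ell$ is good for $\bG$ and $\ell\nmid|(\ZZ(\bG)/\ZZ^\circ(\bG))^F|$, the $\ell$-part of $\tG/G\cong\ZZ(\bG^*)$-related group is trivial, so $\tG/G$ is an $\ell'$-group, and one reduces along a chief series of $\tG/G$ to the cyclic prime-index case where \cite[Thm.~2.5]{De17} applies directly. The point to verify for Denoncin's criterion is the stabilizer condition: for $\widetilde\chi\in\mathcal E(\tG,\ell')$ with $\widetilde f(\widetilde\chi)=\widetilde\phi\in\IBr(\tG)$ the bijection from unitriangularity, one needs $\tG_\chi=\tG_\phi$ whenever $\chi\in\Irr(G\mid\widetilde\chi)$, $\phi\in\IBr(G\mid\widetilde\phi)$ lie below. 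This follows because the basic set $\mathcal E(\tG,\ell')$ is $\Irr(\tG/G)$-stable and the bijection $\widetilde f$ respects multiplication by $\Irr(\tG/G)_{\ell'}=\Irr(\tG/G)$, combined with the multiplicity-freeness of restriction, which pins down $\chi$ and $\phi$ uniquely below $\widetilde\chi$, $\widetilde\phi$ once a linear character is fixed, and forces the stabilizers to agree via formulas \eqref{sta-irr} and \eqref{sta-ibr}. I expect the main obstacle to be precisely this stabilizer-matching step: one must argue carefully that the bijection coming from the unitriangular shape is $\Irr(\tG/G)$-equivariant (true by the uniqueness argument already used in the proof of Proposition~\ref{IndTri}, since $\Dec$ is $\Aut$-invariant) and then translate equivariance into equality of stabilizers of the characters lying below, using that $\mathcal E(\tG,\ell')$ and $\mathcal E(G,\ell')$ correspond under restriction.
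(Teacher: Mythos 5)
Your two-directional plan matches the paper's in spirit — Denoncin's going-down result for (ii)~$\Rightarrow$~(i), a going-up argument for (i)~$\Rightarrow$~(ii) — but both directions contain errors, and the second one has a genuine gap.

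For (i)~$\Rightarrow$~(ii), the chain $\tcB\subseteq\Irr(\tbG^F\mid\cE(\bG^F,\ell'))\subseteq\cE(\tbG^F,\ell')$ is false. The Lusztig series of $\tbG^F$ lying over $\cE(\bG^F,s)$ are precisely the $\cE(\tbG^F,z\ts)$ for $z\in\ZZ((\tbG^*)^F)$, and $\cE(\tbG^F,z\ts)\subseteq\cE(\tbG^F,\ell')$ only when $z$ is an $\ell'$-element. Since $\ZZ((\tbG^*)^F)$ can have $\ell$-torsion (e.g.\ when $\ell\mid q-1$), $\Irr(\tbG^F\mid\cE(\bG^F,\ell'))$ is strictly larger than $\cE(\tbG^F,\ell')$ in general. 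Consequently your cardinality argument does not pin down $\tcB=\cE(\tbG^F,\ell')$, and applying Theorem~\ref{TriangUp} directly is not enough. The paper instead applies Corollary~\ref{going-up-special}, whose output is the intersected set $\Irr(\tbG^F\mid\cE)\cap\Irr(\tbG^F\mid 1_{\ZZ(\tbG^F)_\ell})$; the remainder of the paper's proof is exactly the verification that $\cE(\tbG^F,\ell')$ equals this intersection, using $\Irr(\tbG^F\mid\cE(\bG^F,s))=\bigcup_{z}\cE(\tbG^F,z\ts)$, the identity $\cE(\tbG^F,z\ts)=\hat z\otimes\cE(\tbG^F,\ts)$, and the fact that $\hat z$ is trivial on $\ZZ(\tbG^F)_\ell$ iff $z$ is $\ell'$. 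This intersection with the constraint on $\ZZ(\tbG^F)_\ell$ is precisely the missing ingredient in your argument.

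For (ii)~$\Rightarrow$~(i), your reduction is in spirit the paper's, which cites \cite[Thm.~2.5]{De17} directly. However, your supporting claim that $\tbG^F/\bG^F$ is an $\ell'$-group is false (again, think of $\GL_n(q)/\SL_n(q)$ with $\ell\mid q-1$). What the hypothesis $\ell\nmid|(\ZZ(\bG)/\ZZ^\circ(\bG))^F|$ gives is only $\ell\nmid|\tbG^F/\bG^F\ZZ(\tbG^F)|$, which is the condition the paper invokes to apply Denoncin's theorem. So this direction is morally correct but justified by a wrong intermediate step.
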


\begin{proof}
Note that $\ell\nmid |\tbG^F/\bG^F\ZZ(\tbG^F)|$. If the basic set $\mathcal E(\widetilde \bG^{F}, \ell')$ is unitriangular, then by \cite[Thm.~2.5]{De17}, the basic set $\mathcal E(\bG^{F}, \ell')$ is unitriangular.

Conversely, if the basic set $\cE:=\cE(\bG^{F}, \ell')$ is unitriangular, then according to Corollary \ref{going-up-special}, it suffices to show that $\cE(\widetilde \bG^{F}, \ell')=\Irr(\tbG^F\mid\cE)\cap\Irr(\tbG^F\mid 1_{\ZZ(\tbG^F)_{\ell}})$.
Denote by $\iota:\bG\hookrightarrow \tbG$ the regular embedding, which induces the surjective endomorphism $\iota^*:\tbG^*\to\bG^*$.
Let $s\in {\bG^*}^F$ be a semisimple $\ell'$-element and $\ts\in(\tbG^*)^F$ be the semisimple $\ell'$-element such that $s=\iota^*(\ts)$.
Then $\Irr(\tbG^F \mid  \cE(\bG^F,s)) = \bigcup_{z\in\ZZ((\tbG^*)^F)}\cE(\tbG^F,z\ts)$.
Note that $\cE(\tbG^F,\ts)\subseteq\Irr(\tbG^F\mid 1_{\ZZ(\tbG^F)_\ell})$.
Recall that $\hat z=\Psi_{\tbG,F}(z,1_{(\tbG^*)^F})\in\Irr(\tbG^F/\bG^F)$, then by \cite[Thm.~7.1]{DM90},
 $\cE(\tbG^F,z\ts)=\hat z\otimes \cE(\tbG^F,\ts)$.
Hence
$\cE(\tbG^F,z\ts)\cap \Irr(\tbG^F\mid 1_{\ZZ(\tbG^F)_\ell})\ne\emptyset$ if and only if $z$ is of $\ell'$-order.
So $\Irr(\tbG^F\mid \cE(\bG^F,s))\cap\Irr(\tbG^F\mid 1_{\ZZ(\tbG^F)_\ell})=\bigcup_{z\in\ZZ(\tbG^*)^F_{\ell'}}\cE(\tbG^F,z\ts)$.
Thus we complete the proof.
\end{proof}

We say Hypothesis~\ref{unitri-good} holds for $\mathbb G$ if it is true for any $q$.

Since Hypothesis~\ref{unitri-good} clearly implies Hypothesis~\ref{unitri}, we have the following corollary as an immediate consequence of Theorem~\ref{mainthm}.

\begin{cor}\label{cer-2} 
	Keep the setup of Theorem~\ref{corr-ord-char}. 
	Let $\ell$ be a prime good for $\bG$, $\ell\nmid q$ and not dividing the order of 	$(\ZZ(\bG)/\ZZ^\circ(\bG))^F$.
	Assume that Hypothesis~\ref{unitri-good} holds for $\mathbb G$. 
	If $S:=\bG^F/\ZZ(\bG^F)$ is simple,
	then  the  (iBG)  condition (see Definition~\ref{iBGC}) holds for $S$ and $\ell$.
\end{cor}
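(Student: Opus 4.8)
The plan is to deduce Hypothesis~\ref{unitri} for $\mathbb G$ from Hypothesis~\ref{unitri-good} and then quote Theorem~\ref{mainthm} directly, as the authors indicate. By the Geck--Hiss theorem recalled after Hypothesis~\ref{unitri-good}, $\cE(\tbG^F,\ell')$ and $\cE(\bG^F,\ell')$ are basic sets of $\tbG^F$ and $\bG^F$, and Hypothesis~\ref{unitri-good}(ii) together with (i) (equivalent by Proposition~\ref{prop-equiva}) makes their decomposition matrices unitriangular for suitable orderings. So I would fix the associated unitriangular maps $\widetilde\Theta_q\colon\IBr(\tbG^F)\to\Irr(\tbG^F)$ and $\Theta_q\colon\IBr(\bG^F)\to\Irr(\bG^F)$, with images $\cE(\tbG^F,\ell')$ and $\cE(\bG^F,\ell')$, and verify conditions (i)--(iii) of Hypothesis~\ref{unitri} for them.

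For (i), I would use that $\cE(\tbG^F,\ell')$ is the union of the rational series $\cE(\tbG^F,\ts)$ over $(\tbG^*)^F$-classes of semisimple $\ell'$-elements: multiplication by $\hat z$ with $z\in\ZZ(\tbG^*)^F$ carries $\cE(\tbG^F,\ts)$ to $\cE(\tbG^F,z\ts)$, and if $\hat z$ has $\ell'$-order so does $z$ and hence $z\ts$, so the image is $\Irr(\tbG^F/\bG^F)_{\ell'}$-stable; each element of $D$ is a field or graph automorphism, permuting the $\cE(\tbG^F,\ts)$ through the dual action on semisimple classes and preserving element orders, so the image is $D$-stable. For (iii), I would observe that the bijection $\widetilde\Xi$ of Theorem~\ref{corr-ord-char} is built from Jordan decomposition and keeps the semisimple label fixed, hence carries the $F(q_0)$-fixed subset of $\cE(\tbG^F,\ell')$ onto $\cE(\tbG^{F(q_0)},\ell')=\widetilde\Theta_{q_0}(\IBr(\tbG^{F(q_0)}))$, which is precisely the asserted compatibility with Frobenius endomorphisms.

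The hard part will be condition (ii). By the remark following Hypothesis~\ref{unitri} (namely \cite[Thm.~2.5]{De17}), once (i) is known it is enough to show $|\IBr(\bG^F\mid\widetilde\chi)|_\ell=|\Irr(\bG^F\mid\widetilde\Theta_q(\widetilde\chi))|_\ell$ for every $\widetilde\chi\in\IBr(\tbG^F)$. Writing $\widetilde\chi'=\widetilde\Theta_q(\widetilde\chi)\in\cE(\tbG^F,\ts)$ with $\ts$ a semisimple $\ell'$-element and $s=\iota^*(\ts)$, restriction of Lusztig series gives $\Irr(\bG^F\mid\widetilde\chi')\subseteq\cE(\bG^F,s)$, while the constituents of $\widetilde\chi$ over $\bG^F$ lie in the $\ell$-blocks of $\bG^F$ covered by the block of $\widetilde\chi'$ (the same block, since $d_{\widetilde\chi',\widetilde\chi}=1$). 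Because $\Res^{\tbG^F}_{\bG^F}$ is multiplicity-free on both $\Irr$ and $\IBr$ by \cite[Thm.~1.7.15]{GM20}, both cardinalities are indices of stabilisers in the abelian group $\tbG^F/\bG^F$, and the hypothesis $\ell\nmid|\tbG^F/\bG^F\ZZ(\tbG^F)|$ forces the $\ell$-parts of these indices to be governed by $\ZZ(\tbG^F)_\ell$ alone; I would match them exactly as in the proof of Proposition~\ref{prop-equiva}, where $\cE(\tbG^F,\ell')=\Irr(\tbG^F\mid\cE(\bG^F,\ell'))\cap\Irr(\tbG^F\mid 1_{\ZZ(\tbG^F)_\ell})$ is established, together with the analogue of that computation for the Brauer-character basic sets. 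This equality of $\ell$-parts yields the map $\Theta_q$ with $\Theta_q(\IBr(\bG^F\mid\widetilde\chi))=\Irr(\bG^F\mid\widetilde\chi')$, so Hypothesis~\ref{unitri} holds for $\mathbb G$ and Theorem~\ref{mainthm} gives the (iBG) condition for $S$ and $\ell$. The genuine obstacle is thus matching the fibres of the two unitriangular basic sets along $\bG^F\unlhd\tbG^F$; conditions (i) and (iii) are routine bookkeeping with Lusztig series and Jordan decomposition.
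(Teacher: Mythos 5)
Your proof is correct and follows the same strategy as the paper, which simply asserts that Hypothesis~\ref{unitri-good} implies Hypothesis~\ref{unitri} and then applies Theorem~\ref{mainthm}; your proposal fleshes out the details the paper declares to be clear. One simplification worth noting: condition (ii) of Hypothesis~\ref{unitri}, which you identify as the genuine obstacle, is in fact automatic under the present hypotheses, because both $\ZZ(\tbG^F)$ and $\bG^F$ stabilize every element of $\IBr(\bG^F)$ and of $\Irr(\bG^F)$, so $|\IBr(\bG^F\mid\widetilde\chi)|$ and $|\Irr(\bG^F\mid\widetilde\Theta_q(\widetilde\chi))|$ both divide $|\tbG^F/\bG^F\ZZ(\tbG^F)|$, which is prime to $\ell$ since $\ell\nmid|(\ZZ(\bG)/\ZZ^\circ(\bG))^F|$; hence both $\ell$-parts are $1$ and the criterion of the remark following Hypothesis~\ref{unitri} holds trivially, with no need to match the fibres block by block.
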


By~\cite{GH97},  Hypothesis~\ref{unitri-good} holds when $\mathbb G$ is of classical type and $\ell$ is a linear prime.
Recently, the unitriangularity of decomposition matrix was proven for unipotent blocks and good primes \cite{BDT20}.
But for general cases, this is still open.

We remark that if $\bG$ is not of type $\mathsf A$ and $\ell$ is good for $\bG$, then  $\ell$ does not divide the order of $(\ZZ(\bG)/\ZZ^\circ(\bG))^F$.

Next, we will prove Corollary 4.
Groups $\mathrm{PSp}_4(q)$ are a class of simple groups with abelian Sylow 3-subgroups.

\begin{lem}\label{ellodd}
If both $\ell$ and $q$ are odd and $\ell\nmid q$, then
$\mathcal E(\Sp_4(q),\ell')$ forms a unitriangular basic set of $\Sp_4(q)$ in the sense of Definition~\ref{DefBasic}.
\end{lem}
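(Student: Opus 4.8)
First I would record that, as $\ell$ is odd, it is a good prime for $\Sp_4$ and does not divide $|\ZZ(\Sp_4(\barF_q))|=2$; hence the Geck--Hiss theorem \cite[Thm.~14.4]{CE04} already shows that $\mathcal E(\Sp_4(q),\ell')$ is a basic set of $\Sp_4(q)$, and it remains only to prove that the associated decomposition matrix is lower unitriangular. This is trivial if $\ell\nmid|\Sp_4(q)|$ (the decomposition matrix is then the identity and $\mathcal E(\Sp_4(q),\ell')=\Irr(\Sp_4(q))$), so assume $\ell\mid|\Sp_4(q)|$ and let $d:=d_\ell(q)$ be the multiplicative order of $q$ modulo $\ell$. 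Since $|\Sp_4(q)|=q^{4}\,\Phi_1(q)^2\,\Phi_2(q)^2\,\Phi_4(q)$ and $\ell\neq p$, we have $d\in\{1,2,4\}$, and I would treat these three cases separately.

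If $d=4$, then $\ell$ divides $|\Sp_4(q)|$ only to the first power, so a Sylow $\ell$-subgroup of $\Sp_4(q)$ is cyclic and every $\ell$-block of $\Sp_4(q)$ has cyclic (possibly trivial) defect group. I would then invoke the theory of blocks with cyclic defect group together with the Brauer trees of $\Sp_4(q)$, which are known by work of Fong and Srinivasan: in each such block the characters of $\mathcal E(\Sp_4(q),\ell')$ contained in it occupy positions on the Brauer tree that force the decomposition matrix to be lower unitriangular, so $\mathcal E(\Sp_4(q),\ell')$ is unitriangular in this case.

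If $d\in\{1,2\}$ I would first pass to the group with connected centre. The hypotheses of Proposition~\ref{prop-equiva} hold since $\ell$ is odd, so the statement of the lemma is equivalent to the unitriangularity of $\mathcal E(\CSp_4(q),\ell')$, which I would establish $\ell$-block by $\ell$-block. The unipotent block is unitriangular by Brunat--Dudas--Taylor \cite{BDT20} (applicable as $\ell$ is good), the basic set being the set of unipotent characters. Every non-unipotent $\ell$-block lying in the Lusztig series of a non-isolated semisimple $\ell'$-element of the dual group is, by Bonnaf\'e--Dat--Rouquier \cite{BDR17}, Morita equivalent --- over an adequate local ring, hence compatibly with decomposition matrices and with the basic sets $\mathcal E(\cdot,\ell')$ --- to a block of some $\bN^F$ covering a unipotent block of a proper Levi subgroup of $\CSp_4$; that Levi being of type $\mathsf A$, its unipotent block is unitriangular by \cite{BDT20}, and Theorem~\ref{TriangUp} (whose extension hypothesis is satisfied here) then carries unitriangularity up to $\bN^F$ and so back to the block at hand.

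The hard part will be the remaining $\ell$-blocks, the isolated non-unipotent ones, namely those lying in $\mathcal E(\Sp_4(q),s)$ for $s$ one of the two involutions of $\mathrm{SO}_5(q)$ with centraliser of type $\mathsf D_2$, so that $\C_{\mathrm{SO}_5(q)}(s)\cong\mathrm{SO}_4^{\pm}(q)$; here \cite{BDR17} gives nothing, the relevant Levi being $\CSp_4$ itself. My plan is to use Lusztig's Jordan decomposition of characters to identify this Lusztig series with a union of unipotent series of $\mathrm{SO}_4^{\pm}(q)$ --- groups that are essentially products of copies of $\SL_2(q)$ and $\SL_2(q^2)$, and whose decomposition matrices are unitriangular with basic set $\mathcal E(\cdot,\ell')$ --- and then to check that this identification respects decomposition numbers; making this precise, in particular because centralisers of semisimple elements in the adjoint group $\mathrm{SO}_5$ can be disconnected, is the main technical obstacle. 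A safe alternative is to quote the explicitly determined decomposition matrices of $\Sp_4(q)$ for $\ell\mid q^2-1$ (due to White) and to read off directly that $\mathcal E(\Sp_4(q),\ell')$ is unitriangular there. In either case the three cases together give the lemma.
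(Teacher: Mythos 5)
Your proposal takes a genuinely different route from the paper, and the two are worth contrasting. The paper's proof of Lemma~\ref{ellodd} is entirely explicit: it treats $\ell\mid(q-1)$ via Gruber--Hiss (linear primes), and for $\ell\mid(q+1)$ and $\ell\mid(q^2+1)$ it lists, block by block, a basic set drawn from Srinivasan's character table and reads off unitriangularity from White's decomposition matrices \cite{Wh90a} and Brauer trees \cite{Wh92}, finally checking that the union of these block-wise basic sets is exactly $\mathcal E(\Sp_4(q),\ell')$ via the semisimple labels in \cite{Wh90b}. You instead propose to run the general machinery of \S\ref{sec-unitri-Lie} (Proposition~\ref{prop-equiva} to pass to $\CSp_4(q)$, then \cite{BDT20} for unipotent blocks, \cite{BDR17} plus Theorem~\ref{TriangUp} for non-isolated blocks), which is more structural and scales better, but for a single small group like $\Sp_4(q)$ is considerably heavier.

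The structural route does have a genuine gap where you expect it: the isolated non-unipotent blocks. Your primary plan --- to use the Jordan decomposition $\mathcal E(\CSp_4(q),s)\leftrightarrow\mathcal E(\C_{\tbG^*}(s)^F,1)$ and import unitriangularity from the centraliser --- is not, by itself, a proof, because Jordan decomposition is a bijection of ordinary characters; it is not a priori compatible with Brauer characters or decomposition numbers. To make it one you would need a Morita (or at least a basic-set preserving) equivalence between $B_s$ and the unipotent block of the centraliser, and for \emph{isolated} $s$ that is precisely what \cite{BDR17} does not give. This is exactly the obstruction that Conjecture~3 of \S\ref{sec-unitri-Lie} is designed to isolate, so "making this precise" is not a routine check. (Note also that once you pass to $\CSp_4(q)$, centralisers of semisimple elements in the dual are connected, so the disconnectedness worry you raise is spurious; the real obstacle is the compatibility of Jordan decomposition with decomposition numbers.) Your "safe alternative" --- read unitriangularity directly off White's explicit decomposition matrices for $\ell\mid(q^2-1)$ --- is sound and is, essentially, what the paper does; but once one invokes White's tables for the isolated blocks, it is simpler to use them for all blocks at once, as the paper does, rather than to import \cite{BDT20}, \cite{BDR17}, and Theorem~\ref{TriangUp} only to cover the easy ones. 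Two further small points: the Brauer trees of $\Sp_4(q)$ that you want for $d=4$ are due to White \cite{Wh92}, not Fong--Srinivasan; and for the cyclic-defect case one must still verify, as the paper does, that the characters of $\mathcal E(\Sp_4(q),\ell')$ in each block are exactly the non-exceptional vertices of the Brauer tree --- this is a finite check but it cannot be skipped.
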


\begin{proof}
Using the notation of Section~\ref{SG-Lie}, we have $\bG=\Sp_{4}(\overline\F_q)$ and $F=F_0^f$, where $f$ satisfies $q=p^f$.
The irreducible characters of $G=\bG^F$ were computed by~\cite{Sr68} and we will use the parametrizations of characters there.
The semisimple labels in the Jordan decomposition of characters of $G$ are given in Table A1 of~\cite{Wh90b}.

Assume that $\ell$ divides the order of  $G$.
Note that $|G|=q^4(q-1)^2(q+1)^2(q^2+1)$.
Thus $\ell$ divides exactly one of $q-1$, $q+1$ or $q^2+1$.
By~\cite{Wh90a}, any block of $G$ is either of cyclic defect group or of maximal defect if $\ell\mid (q\pm1)$ and
any block of $G$ is of cyclic defect group if $\ell\mid (q^2+1)$.
The decomposition matrices of blocks of $G$ of  maximal defect were studied in~\cite{Wh90a}, while the Brauer trees for cyclic blocks were given in~\cite{Wh92}.
If $\ell\mid (q-1)$, then $\ell$ is a linear prime and then the assertion follows from~\cite{GH97}.
So in the following we assume that $\ell\mid (q+1)$ or $\ell\mid (q^2+1)$.

If $\ell\mid (q+1)$, then using the notation of~\cite{Wh90a}, we choose basic sets
$\{1_G,\theta_{10},\theta_{11},\theta_{12},\}$, $\{\Phi_1,\Phi_2,\Phi_3,\Phi_4\}$,
$\{\theta_3, \theta_4,\Phi_1, \theta_1,\theta_2\}$,
$\{\xi_1(s),\xi'_1(s)\}$, $\{\xi_{21}(s),\xi'_{22}(s) \}$,
$\{\chi_6(s),\chi_7(s)\}$, $\{\chi_4(s,t)\}$
for the blocks of maximal defect $b_0$, $b_1$, $b_2$, $b_1(s)$, $b_{21}(s)$,  $b_{67}(s)$,  $b_{4}(s,t)$  respectively.
Under the notation of~\cite[\S 2]{Wh92}, 
we choose basic sets 
$\{\chi_2(s)\}$, $\{\chi_5(s,l)\}$,
$\{\xi_{21}(s)\}$, $\{\xi_{22}(s)\}$,
$\{ \chi_8(l),\chi_9(l)  \}$,
$\{ \xi_3(l),\xi'_3(l)  \}$, $\{ \xi'_{41}(l),\xi'_{42}(l)  \}$, 
$\{ \Phi_5,\phi_7  \}$,  $\{ \Phi_6,\phi_8  \}$,  $\{ \theta_5,\theta_8  \}$,
$\{ \theta_6,\theta_7   \}$
for the blocks of cyclic defect groups
$B_2(s)$, $B_5(s,l)$, $B_{21}(s)$, $B_{22}(s)$,
$B_{89}(l)$, $B_{53}(l)$, $B_{54}(l)$,  $B_{57}$, $B_{68}$, $B_{58}$, $B_{67}$ respectively.
If $\ell\mid (q^2+1)$ then using the notation of~\cite[\S3]{Wh92},
we choose basic sets 
$\{ 1_G, \theta_9,\theta_{10},\theta_{13}\}$, $\{ \theta_5,\theta_6,\theta_{7},\theta_{8} \}$,
$\{\chi_1(s)\}$
for the blocks
$B_0$, $B_0^*$, $B_1(s)$
respectively.

We denote by $\mathcal E_0$ the union of the basic sets of blocks of $G$ in both cases of the above paragraph.
By~\cite{Wh90a} and~\cite{Wh92}, the decomposition matrix associated with $\mathcal E_0$ is unitriangular.
Note that  by the Brauer tree given in~\cite{Wh92},  the basic sets of cyclic blocks above consist of the non-exceptional characters and then the unitriangular shape is well-known~(see, e.g.,~\cite[Thm.~5.1.2]{Cr19}).
Using the semisimple labels given in Table A1 of~\cite{Wh90b},
we can check directly that $\mathcal E_0=\mathcal E(\bG^F,\ell')$, as desired.
\end{proof}

\begin{cor}\label{sp4}
The  (iBG)  condition holds for the simple group $\mathrm{PSp}_4(q)$.
\end{cor}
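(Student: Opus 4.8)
The plan is to split into cases according to the relation of $\ell$ to $q$ and the parity of $q$, reducing each case to a result already available. Throughout write $q=p^f$, and recall that $\mathrm{PSp}_4(q)$ is simple precisely for $q\ge 3$. First I would dispose of the defining characteristic: if $\ell=p$, the (iBG) condition holds by~\cite[Thm.~5.1]{NST17}, so from now on $\ell\ne p$.

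Next I would treat the case $q$ even. Then $p=2$ and $\ell$ is odd, and $S=\mathrm{PSp}_4(q)=\Sp_4(q)$; for $q\ge 4$ this group is its own universal covering group. The outer automorphism group $\mathrm{Out}(\Sp_4(2^f))$ is cyclic---it is generated by the exceptional graph--field automorphism whose square is the standard Frobenius and which already contains all field automorphisms---so the (iBG) condition follows at once from~\cite[Thm.~4.4]{NST17}.

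It remains to handle $q$ odd. Here $\bG=\Sp_4$, $\tbG=\CSp_4$, and since ${\mathsf C}_2$ has no graph automorphism in odd characteristic we may take $F=F_0^f$; we are then in the setup of Theorem~\ref{corr-ord-char} with $\mathbb G$ of type ${\mathsf C}_2$, and $\Sp_4(q)=\bG^F$ is the universal covering group of $S=\bG^F/\ZZ(\bG^F)=\mathrm{PSp}_4(q)$. If $\ell=2$ the statement is exactly Theorem~\ref{typeC-2}. So I may assume $\ell$ is odd (and $\ell\ne p$): then $\ell$ is good for $\bG$ (the unique bad prime of ${\mathsf C}_2$ being $2$), $\ell\nmid q$, and $\ell$ does not divide $|(\ZZ(\bG)/\ZZ^\circ(\bG))^F|=2$. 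By Lemma~\ref{ellodd}, $\mathcal E(\Sp_4(q),\ell')$ is a unitriangular basic set of $\bG^F$, i.e.\ Hypothesis~\ref{unitri-good}(i) holds for $\mathbb G$; by Proposition~\ref{prop-equiva} this is equivalent to Hypothesis~\ref{unitri-good}(ii), so Hypothesis~\ref{unitri-good} holds for $\mathbb G$. Since $S$ is simple, Corollary~\ref{cer-2} then delivers the (iBG) condition for $S$ and $\ell$, completing all cases.

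The only substantive input is Lemma~\ref{ellodd}, which has already been proved above from Srinivasan's character table of $\Sp_4(q)$ and White's analysis of its $\ell$-blocks; everything else is bookkeeping---determining $\mathrm{Out}(\mathrm{PSp}_4(q))$, excluding the non-simple $\mathrm{PSp}_4(2)$, and noting that $\Sp_4(q)$ is the universal covering group of $\mathrm{PSp}_4(q)$ for $q\ge 3$ (the isomorphism $\mathrm{PSp}_4(3)\cong\PSU_4(2)$ causes no difficulty). I therefore do not expect a genuine obstacle: the argument is simply an assembly of Lemma~\ref{ellodd}, Proposition~\ref{prop-equiva} and Corollary~\ref{cer-2} in the odd--odd case, together with the cyclic-outer-automorphism criterion for even $q$ and the defining-characteristic result.
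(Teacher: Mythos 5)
Your proof is correct and follows essentially the same route as the paper's: reduce to odd $\ell$ via Theorem~\ref{typeC-2}, dispose of even $q$ by cyclicity of $\mathrm{Out}(\Sp_4(2^f))$ and \cite[Thm.~4.4]{NST17}, handle defining characteristic by \cite[Thm.~5.1]{NST17}, and settle the remaining case ($q$ odd, $\ell$ odd, $\ell\nmid q$) by combining Lemma~\ref{ellodd}, Proposition~\ref{prop-equiva} and Corollary~\ref{cer-2}. The only difference is cosmetic (you peel off $\ell=p$ first and $\ell=2$ last, whereas the paper starts by invoking Theorem~\ref{typeC-2}), and the ancillary remarks about $\mathrm{PSp}_4(2)$ and the covering group are accurate.
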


\begin{proof}
	Thanks to Theorem~\ref{typeC-2}, we only need to consider odd primes $\ell$. If $q$ is even, then by~\cite[Thm.~2.5.12]{GLS98}, 	
$\mathrm{Out}(S)$ is cyclic and this assertion follows from~\cite[Thm.~4.4]{NST17}.	
Thus we assume further that $q$ is odd.	
Also by~\cite[Thm.~5.1]{NST17}, we can assume $\ell\nmid q$.
	
The universal covering group of the simple group $S=\mathrm{PSp}_4(q)$ ($q\ne2$) is $G=\Sp_4(q)$.
Using the notation of  \S\ref{SG-Lie}, we set $\bG=\Sp_{4}(\overline\F_q)$ and $\widetilde{\bG}=\mathrm{CSp}_{4}(\overline\F_q)$.
If $\ell$ is odd, then 
Hypothesis~\ref{unitri-good} holds by Lemma~\ref{ellodd}.
Thus the assertion follows from
Corollary~\ref{cer-2}.
\end{proof}

As an application, we prove Corollary 4.

\begin{proof}[Proof of Corollary 4.
	]
By Theorem~\ref{main-thm-SV16}, it suffices to prove that any non-abelian simple group $S$ involved in $G$ satisfies the (iBG)  condition.
If $S$ is not of Lie type, then $S$ satisfies the (iBG)  condition for any prime by~\cite[Cor.~4.6]{NST17}.

So we assume that $S$ is of Lie type and then by~\cite[Lemma~2.2]{FLL17}, $S$ is a simple group of type $\mathsf A$, a Suzuki group, or $\mathrm{PSp}_4(q)$ ($q\ne 2$, $3\nmid q$).
Then $S$ satisfies the (iBG) condition by Theorem~\ref{for-type-A},  \cite[Cor.~4.6]{NST17} and Corollary~\ref{sp4}, respectively.
This completes the proof.
\end{proof}

\section{Unitriangular basic sets of finite reductive groups}\label{sec-unitri-Lie}

In this section, we discuss the existence of unitriangular basic sets of finite reductive groups.
Decomposition matrices can be computed blockwise.
As explained before, we view Theorem \ref{TriangUp} as a missing link between Bonnaf\'e--Dat--Rouquier's results relating blocks of finite reductive groups $\bG^F$ to (essentially) unipotent blocks of certain non-connected groups $\bN^F$ (see \cite[Thm.~1.1]{BDR17}) on one hand and Brunat--Dudas--Taylor's theorem about decomposition matrices of unipotent blocks of connected groups (see \cite[Thm.~A]{BDT20}) on the other hand. In order to apply it to more primes we make use of Denoncin's stronger results from \cite{De17} on the groups $\SL_n(q)$ and $\SU_n(q)$.

\begin{prop}\label{BDT+}
Let $\bG$ be a simply-connected simple algebraic group with Frobenius endomorphism $F:\bG\to\bG$ defining it over a finite field of characteristic $p$.
Assume that $\ell$ and $p$ are distinct and both good for $\bG$.
Let $B_1$ be the sum of unipotent $\ell$-blocks of $\bG^F$.
Then the decomposition matrix of $B_1$ is lower unitriangular with respect to a suitable $\Aut(\bG^F)$-stable basic set and some ordering of its rows and columns.
\end{prop}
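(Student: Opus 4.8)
The plan is to split the argument according to whether $\bG$ is of type $\mathsf A$. Suppose first that $\bG$ is \emph{not} of type $\mathsf A$. Since $\bG$ is simple and simply connected we have $\ZZ^\circ(\bG)=\{1\}$, so by the remark preceding this proposition the assumption that $\ell$ is good for $\bG$ already forces $\ell\nmid|(\ZZ(\bG)/\ZZ^\circ(\bG))^F|$; together with $\ell\ne p$ and $p$ good for $\bG$ (the latter being what allows one to use the generalized Gelfand--Graev representations that are a key tool of \cite{BDT20}) this puts us exactly in the setting of \cite[Thm.~A]{BDT20}. That theorem then gives that the set $\mathcal E(\bG^F,1)$ of unipotent characters is a basic set for $B_1$ whose decomposition matrix is lower unitriangular for some ordering of the rows and columns. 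Since every automorphism of $\bG^F$ permutes the unipotent characters---they are the irreducible constituents of the Deligne--Lusztig characters $R_{\bT}^{\bG}(1)$, and automorphisms permute the latter---this basic set is automatically $\Aut(\bG^F)$-stable, and nothing more is needed. (If the formulation of \cite[Thm.~A]{BDT20} one uses leaves aside ${}^3{\mathsf D}_4(q)$ at $\ell=3$, that single remaining case is covered by the explicitly determined decomposition matrices of ${}^3{\mathsf D}_4(q)$.)

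Now suppose $\bG$ is of type $\mathsf A$, so that $\bG^F=\SL_n(\eps q)$ with $\eps=\pm1$; here every prime is good for $\bG$, but \cite[Thm.~A]{BDT20} may fail to apply when $\ell\mid|\ZZ(\SL_n(\eps q))^F|$, which is why one turns to \cite{De17}. As recalled in the proof of Theorem~\ref{for-type-A}, \cite{De17} provides a basic set $\mathcal I'_{\eps q}\subseteq\Irr(\SL_n(\eps q))$ for the whole of $\IBr(\SL_n(\eps q))$ with unitriangular decomposition matrix, obtained by restricting an explicitly described basic set $\widetilde{\mathcal I}'_{\eps q}$ of $\GL_n(\eps q)$; the combinatorial description of $\widetilde{\mathcal I}'_{\eps q}$ in terms of admissible symbols, together with the known action of the linear characters of $\GL_n(\eps q)/\SL_n(\eps q)$ and of the field and graph automorphisms on such symbols, shows that $\mathcal I'_{\eps q}$ is stable under $\Aut(\SL_n(\eps q))$.

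In this type-$\mathsf A$ case it remains to cut a unitriangular basic set for the whole group down to one for the summand $B_1$. For this I would use block-diagonality of the decomposition matrix: if $\mathcal B\subseteq\Irr(H)$ is any subset with $\Dec_{\mathcal B,\IBr(H)}$ square lower unitriangular, then the associated bijection $f\colon\mathcal B\to\IBr(H)$ (recorded by the diagonal entries, as in the proof of Proposition~\ref{IndTri}) sends each $\chi$ to a Brauer character $f(\chi)$ in the same $\ell$-block, since $d_{\chi,f(\chi)}=1\ne0$; hence for every union $B$ of $\ell$-blocks the map $f$ restricts to an injection $\mathcal B\cap\Irr(B)\hookrightarrow\IBr(B)$, these restrictions are bijections by comparing cardinalities over all blocks, and deleting from $\Dec_{\mathcal B,\IBr(H)}$ the rows and columns lying outside $B$ leaves a (smaller) lower unitriangular matrix. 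Applying this with $H=\SL_n(\eps q)$, $\mathcal B=\mathcal I'_{\eps q}$ and $B=B_1$ shows that $\mathcal I'_{\eps q}\cap\Irr(B_1)$ is a unitriangular basic set for $B_1$; and since $B_1=\mathcal E_\ell(\bG^F,1)$ is $\Aut(\bG^F)$-stable (automorphisms permute Lusztig series compatibly with taking $\ell'$-parts of semisimple elements) and $\mathcal I'_{\eps q}$ is $\Aut(\bG^F)$-stable, so is $\mathcal I'_{\eps q}\cap\Irr(B_1)$, which completes the proof. I expect the main obstacle to be purely a matter of bookkeeping---verifying that the hypotheses of \cite[Thm.~A]{BDT20} really hold throughout the non-type-$\mathsf A$ range where $\ell$ and $p$ are both good for $\bG$ (and dealing with the ${}^3{\mathsf D}_4$, $\ell=3$ case), and that Denoncin's basic set is automorphism-stable and restricts cleanly to the unipotent blocks---rather than any genuinely new conceptual difficulty.
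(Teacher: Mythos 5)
Your argument follows the same dichotomy the paper uses — type~$\mathsf A$ handled via Denoncin's \cite[Thm.~A]{De17}, all other types via Brunat--Dudas--Taylor's \cite[Thm.~A]{BDT20} after observing that for non-type-$\mathsf A$ simple simply-connected groups ``$\ell$ good'' already means ``$\ell$ very good'' — and reaches the same conclusion, so it is correct and essentially identical to the paper's proof. The extra bookkeeping you include (cutting a unitriangular basic set for the whole group down to the summand $B_1$ by block-diagonality, and the hedge about ${}^3{\mathsf D}_4$ at $\ell=3$, which is in fact already within the scope of \cite[Thm.~A]{BDT20}) is harmless detail that the paper leaves implicit in its one-line citations.
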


\begin{proof}
If $\bG$ is of type $\mathsf A$ this follows from \cite[Thm.~A]{De17}. 

In other types Theorem A of \cite{BDT20} applies since the fact that $\ell$ is good for $\bG$ also implies that it does not divide the order of $\ZZ(\bG)/\ZZ^\circ(\bG)$, hence $\ell$ is very good in the sense of \cite{BDT20}.
The basic set there is the set of unipotent characters which is stable under automorphisms (see for instance \cite[Prop.~2.6]{CS13}).
\end{proof}

In the following statement we apply the above result on unitriangular basic sets for unipotent blocks and obtain that many non-unipotent blocks have analogously such a
basic set as well. 
For an analogous statement applying to all $\ell$-blocks it would be needed to know that every isolated $\ell$-block has a sufficiently stable unitriangular basic set (see Proposition~\ref{RedIso} below).

\begin{thm}\label{thm_appl}
Let $\bG$ be a simple simply-connected algebraic group with Frobenius endomorphism $F:\bG\to\bG$ defining it over a finite field of characteristic $p$.
Assume that $\ell$ and $p$ are distinct and both good for $\bG$. 
Let $(\bG^*,F)$ be dual to $(\bG,F)$ and $s\in(\bG^*)^F$ a semisimple $\ell'$-element such that $\C_{\bG^*}^\circ(s)$ is a Levi subgroup of $\bG^*$. Assume moreover that $\C_{\bG^*}(s)/\C_{\bG^*}^\circ(s)$ is cyclic or that $\ell\nmid (q^2-1)$.
Let $B_s$ be the sum of $\ell$-blocks of $\bG^F$ associated to $s$ as in \cite[Thm.~9.12]{CE04}.

Then the $\ell$-decomposition matrix of $B_s$ is lower unitriangular with respect to a suitable basic set.
\end{thm}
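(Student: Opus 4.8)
The plan is to feed the Brunat--Dudas--Taylor and Denoncin results on unipotent blocks into the Bonnaf\'e--Dat--Rouquier Morita equivalence and then climb from $\bL^F$ to $\bN^F$ via the ``going-up'' statements of Section~\ref{sec-decom-CLI}. Since $\C^\circ_{\bG^*}(s)$ is an $F$-stable Levi subgroup of $\bG^*$, it is in duality with an $F$-stable Levi subgroup $\bL$ of $\bG$. By \cite[Thm.~1.1]{BDR17}, applied to $\bG$ (and $s$), there is an $F$-stable subgroup $\bN$ with $\bL\unlhd\bN\le\bG$, the quotient $\bN^F/\bL^F$ abelian, and a Morita equivalence over a suitable complete discrete valuation ring $\cO$ between $B_s$ and the sum $B'$ of $\ell$-blocks of $\bN^F$ covering the sum $B_0$ of $\ell$-blocks of $\bL^F$ with $\Irr(B_0)\cap\mathcal E(\bL^F,\ell')=\mathcal E(\bL^F,s)$; here $\bN$ is, by construction, the stabiliser in $\bN_{\bG}(\bL)$ of the block idempotent of $B_0$, so that $\Irr(B')=\Irr(\bN^F\mid\Irr(B_0))$ and $\IBr(B')=\IBr(\bN^F\mid\IBr(B_0))$, and $\bN^F/\bL^F$ is isomorphic to a subquotient of $(\C_{\bG^*}(s)/\C^\circ_{\bG^*}(s))^F$. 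Morita equivalences over $\cO$ preserve decomposition matrices, so it suffices to exhibit a basic set of $B'$ with unitriangular decomposition matrix.

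For this, note first that $s\in\ZZ(\C^\circ_{\bG^*}(s))=\ZZ(\bL^*)$, so tensoring by the linear (Brauer) character $\hat s$ of $\bL^F$ sends the sum of unipotent $\ell$-blocks of $\bL^F$ onto $B_0$, carrying the unipotent characters onto $\mathcal E(\bL^F,s)$ and preserving all decomposition numbers. The derived subgroup $[\bL,\bL]$ is a direct product of simple simply-connected groups, on which $\ell$ remains good and distinct from $p$; applying Proposition~\ref{BDT+} to the non-type-$\mathsf A$ factors and \cite[Thm.~A]{De17} to the type-$\mathsf A$ factors, and using that the decomposition matrix of the unipotent $\ell$-blocks of $\bL^F$ is assembled from those of the simple factors of $[\bL,\bL]$, one gets that the sum of unipotent $\ell$-blocks of $\bL^F$ has a unitriangular decomposition matrix with respect to the basic set of unipotent characters. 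Transporting through $\hat s\otimes-$, the sum $B_0$ has a unitriangular decomposition matrix with respect to $\cB:=\mathcal E(\bL^F,s)$; set $\cC:=\IBr(B_0)$, so $|\cB|=|\cC|$. Since $\bN$ normalises $\bL$ and stabilises $B_0$, hence the $\ell'$-label $s$ of its series, both $\cB$ and $\cC$ are $\bN^F$-stable.

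It remains to climb from $\bL^F$ to $\bN^F$, and here the hypothesis divides into two cases. If $\C_{\bG^*}(s)/\C^\circ_{\bG^*}(s)$ is cyclic then $\bN^F/\bL^F$ is cyclic, so every element of $\cB$ extends to its stabiliser in $\bN^F$ (cyclic quotient), and Theorem~\ref{TriangUp} provides $\tcB\subseteq\Irr(\bN^F\mid\cB)$ with $|\tcB|=|\IBr(\bN^F\mid\cC)|$ and $\Dec_{\tcB,\IBr(\bN^F\mid\cC)}$ unitriangular. If instead $\ell\nmid(q^2-1)$, then $\ell\notin\{2,3\}$ (any $\ell\in\{2,3\}$ with $\ell\ne p$ divides $q^2-1$); as $\bN^F/\bL^F$ is a subquotient of $(\C_{\bG^*}(s)/\C^\circ_{\bG^*}(s))^F$, whose order for the relevant types involves only the primes $2$ and $3$ (type $\mathsf A$ being already covered, the component group being cyclic there), the abelian group $\bN^F/\bL^F$ is a solvable $\ell'$-group, and Corollary~\ref{cor} yields that $\Dec_{\Irr(\bN^F\mid\cB),\IBr(\bN^F\mid\cC)}$ is unitriangular; put $\tcB:=\Irr(\bN^F\mid\cB)$. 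In both cases $\IBr(\bN^F\mid\cC)=\IBr(B')$, so $\Dec_{\tcB,\IBr(B')}$ is square unitriangular and $\tcB$ is a basic set for $B'$ by Definition~\ref{DefBasic}; its image under the Morita equivalence is the desired basic set for $B_s$. The main obstacle is marshalling the precise input from \cite{BDR17} --- identifying $\bN$, checking that $\bN^F/\bL^F$ is abelian with the stated control on its order, securing the Morita equivalence over an $\ell$-modular system fine enough to transport decomposition matrices, and matching $B_0$ with the sum of unipotent blocks up to the twist by $\hat s$ --- together with verifying the stabiliser hypotheses required by Theorem~\ref{TriangUp}; once these are in place, the going-up itself is immediate from Section~\ref{sec-decom-CLI}.
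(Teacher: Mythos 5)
Your overall route — BDR Morita equivalence, reduce to unipotent blocks of $\bL^F$, then climb to $\bN^F$ via Section~\ref{sec-decom-CLI} — matches the paper's. However, there is a genuine gap at the heart of your argument, namely the step from $L_0:=[\bL,\bL]^F$ to $\bL^F$. You write that ``the decomposition matrix of the unipotent $\ell$-blocks of $\bL^F$ is assembled from those of the simple factors of $[\bL,\bL]$, one gets that the sum of unipotent $\ell$-blocks of $\bL^F$ has a unitriangular decomposition matrix with respect to the basic set of unipotent characters.'' This does not follow: $\bL^F/L_0$ is abelian but $\bL^F$ is not a direct product of $L_0$ and a torus in general, the $\IBr$'s of the unipotent blocks of $\bL^F$ need not restrict irreducibly to $L_0$, and the decomposition numbers of $\bL^F$ are not the ones of $L_0$. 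The paper does \emph{not} claim the unipotent characters form a unitriangular basic set for $\bL^F$; instead it applies Proposition~\ref{BDT+} factor-wise to $L_0$, then uses Theorem~\ref{TriangUp} twice — first climbing to a uniquely determined intermediate group $L_1$ with $L_0\le L_1\le\bL^F$ and $|\bL^F/L_1|=|\bL^F/L_0|_\ell$, and then from $L_1$ to $\bL^F$ by inducing along a carefully built extension map $\Lambda$ — and the basic set it obtains for $\bL^F$ is $\{\Ind^{\bL^F}_{(\bL^F)_\chi}(\Lambda(\chi))\mid\chi\in\cB_1\}$, not $\cE(\bL^F,1)$. You omit this whole construction, so the core ``unitriangularity for $\bL^F$'' you feed into the last step is unjustified.

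A second, smaller issue is where you locate the hypothesis ``$\C_{\bG^*}(s)/\C^\circ_{\bG^*}(s)$ cyclic or $\ell\nmid(q^2-1)$''. You use it only to argue $\bN^F/\bL^F$ is an $\ell'$-group so that Corollary~\ref{cor} applies in the final going-up; but the paper observes (citing \cite[13.16(i)]{CE04}) that $\bN_s/\bL_s$ is \emph{automatically} an $\ell'$-group because $s$ has $\ell'$-order, so your dichotomy for the last step is unnecessary. The real role of the hypothesis in the paper's proof is to guarantee that every unipotent-block character of $\bL^F$ extends to its stabiliser in $\bN^F$ — via cyclicity in one case and via \cite[Prop.~16]{Ru20} in the other — and this extendibility is needed to make the extension map $\Lambda$ (hence the resulting basic set for $\bL^F$) $\bN^F$-equivariant. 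You should also cite \cite{Ru20} alongside \cite{BDR17} for the Morita equivalence in the non-cyclic case. Finally, your twist by $\hat s$ is a valid alternative formulation (the paper's statement of the BDR equivalence already targets the unipotent blocks of $\bL^F$ directly, so no explicit twist appears there), but the gap above needs to be closed before the argument is complete.
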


\begin{proof}
We apply \cite[1.1]{BDR17} and \cite{Ru20} and get that $B_s$ is Morita equivalent to some $b_s$, a sum of blocks of a group $\bN^F_s$ covering the sum of unipotent blocks of $\bL^F_s$
where $\bL_s$ is an $F$-stable Levi subgroup $\bG$ dual to $\C_{\bG^*}^\circ(s)$
and $\bN_s/\bL_s\cong \C_{\bG^*}(s)/\C_{\bG^*}^\circ(s)$ by duality. 
Note that since $s$ is an element of $\ell'$-order the group $\bN_s/\bL_s$ is an $\ell'$-group, see \cite[13.16~(i)]{CE04}.
Moreover the decomposition matrices of $B_s$ and $b_s$ coincide via the character bijections given by the Morita equivalence, see \cite[\S 2.2, Ex. 3]{Be91}. So there remains to check that $b_s$ has a unitriangular basic set in the sense of Definition \ref{DefBasic}.	
	
For later we first ensure that every character of a unipotent block of $\bL^F_s$ extends to its
stabilizer in $\bN^F_s$. 
It is sufficient to consider the case where $\bN_s^F/\bL_s^F$ is non-cyclic and our character is stable under the whole $\bN_s$, since otherwise extendibility is ensured by the cyclicity of the inertial quotient. 
We may then apply \cite[Prop.~16]{Ru20} to a module where $\bL_s^F$ acts on the right and $\bG^F$ acts trvially on the left. We get that every $\bN_s^F$-stable character of $\bL_s^F$ in a unipotent block extends to $\bN_s^F$.
	 
Let $\bL_0 = [\bL_s,\bL_s]$ and $L_0=\bL_0^F$.
According to \cite[12.14]{MT11} the group $\bL_0$ is the direct product
of simply-connected simple groups and hence $L_0$ is the direct product of groups $\bH^{F'}$, where $\bH$ is a simply-connected simple algebraic group and $F' : \bH \to \bH$ is a Frobenius endomorphism. 
We can apply Proposition \ref{BDT+} to the unipotent $\ell$-blocks of those $\bH^{F'}$. 
The unipotent characters of $\bL^F_s$ and $L_0$ correspond by restriction, so  the unipotent blocks of $\bL^F_s$
are covering only unipotent blocks of $\bH^{F'}$. 
Being good for $\bG$, the prime $\ell$ is also good for $\bL_s$ (see \cite[13.10]{CE04}) and hence for each $\bH$. Accordingly the sum of unipotent blocks of each $\bH^{F'}$ has a
unitriangular $\Aut(\bH^{F'})$-stable basic set by Proposition \ref{BDT+}. 
Thanks to the description in terms of roots and Weyl group one sees easily that the action of $\C_{\bG^*}(s)$ on  $\C_{\bG^*}^\circ(s)$ permutes the irreducible components of its root system, so dually $\bN_s$ permutes only factors of $\bL_0$ that are of the same type. This implies that $\bN_s^F$-conjugacy permutes factors $\bH^{F'}$ of $L_0$ of the same type, hence permutes the basic sets chosen by applying Proposition \ref{BDT+}. Consequently there exists an $\bN^F_s$-stable unitriangular basic set $\cB_0$ for the sum of unipotent blocks of $L_0$. 
The quotient $\bL^F_s/L_0$ being abelian there exists a unique subgroup $L_1$ with $L_0\le L_1\le \bL^F_s$
such that $|\bL_s^F/L_0|_\ell=|\bL_s^F/L_1|$. 
According to Theorem \ref{TriangUp} the set $\cB_1 := \Irr(L_1\mid\cB_0)$ forms an $\bN_s^F$-stable unitriangular basic set of the sum of blocks of $L_1$ covering a unipotent block of $L_0$. 
Note that maximal extendibility holds with respect
to $L_0\unlhd  \bL^F_s$, see \cite[15.11]{CE04}. Let $\Lambda$ be now an extension map with respect to $L_1\unlhd \bL_s^F$
for $\cB_1$. 
This extension map can be chosen to be $\bN^F_s$-equivariant by an application of \cite[11.31]{Is76}, since $|\bN_s^F/\bL_s^F|$ is coprime to $|\bL_s^F/L_1|$ and hence every character of $L_1$ extends to its stabilizer in $\bN^F_s$. 
By another application of Theorem \ref{TriangUp} (see Remark \ref{remTri} (ii)) we see that 
$\{\Ind^{\bL_s^F}_{(\bL_s^F)_\chi}(\Lambda(\chi))\mid\chi\in\cB_1 \}$ 
is a unitriangular basic set of $\bL^F_s$. It is clearly $\bN^F_s$-stable, since $\Lambda$ is $\bN^F_s$-equivariant and $\cB_1$ is $\bN_s^F$-stable. 
Via Theorem \ref{TriangUp} we obtain a unitriangular basic set of $\bN^F_s$ for all blocks covering a unipotent block of $L_0$.
In particular we see that $b_s$ has a unitriangular basic set. As said before, this completes our proof.
\end{proof}

Note that the basic set obtained by the above construction might not be $\Aut(\bG^F)_{B_s}$-stable in general. 
For several specific groups another construction is possible and provides an $\Aut(\bG^F)_{B_s}$-stable basic set as explained in the following proposition.

\begin{prop}\label{appl_C} Let $\bG$ be a simply-connected simple algebraic group defined over a finite field of good characteristic $p$ with associated Frobenius endomorphism $F\colon \bG\to\bG$. Let $\ell$ be an odd prime, $\ell\neq p$. Let $(\bG^*,F)$ be dual to $(\bG,F)$ and assume $s\in\bG^*{}^F$ is a semisimple $\ell'$-element such that $\C_{\bG^*}^\circ(s)$ is a \textbf{Levi subgroup} of $\bG^*$. Let $B_s$ be the sum of $\ell$-blocks of $\bG^F$ associated with $s$.
	Assume moreover one of the following hypotheses.
	\begin{enumerate}[(i)]
		\item $\bG$ has  type $\mathsf B$, $\mathsf C$ or $\mathsf D$.

	\item	$\bG$ has  type $\mathsf E_7$ or $\mathsf E_8$, and $\ell\geq 11$.

		\item	$\bG$ has  type $\mathsf E_6$ or $\mathsf F_4$, and $\ell\geq 7$.

		\item	$\bG$ has  type $\mathsf G_2$ and $\ell\geq 5$.
		
		\end{enumerate}

Then the rational series $\cE(\bG^F, s)$ is a unitriangular basic set for $B_s$ that is $\Aut(\bG^F)_{B_s}$-stable.
\end{prop}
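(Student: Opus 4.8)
The plan is to follow the strategy behind the proof of Theorem~\ref{thm_appl}, but to keep track of exactly which characters make up the basic set so that on the $\bG^F$-side it is \emph{literally} the rational series $\cE(\bG^F,s)$. First I would invoke \cite[Thm.~1.1]{BDR17} together with \cite{Ru20}: since $\C^\circ_{\bG^*}(s)$ is a Levi subgroup of $\bG^*$, the block $B_s$ is Morita equivalent to a sum of blocks $b_s$ of $\bN_s^F$ covering the sum of unipotent blocks of $\bL_s^F$, where $\bL_s$ is the $F$-stable Levi of $\bG$ dual to $\C^\circ_{\bG^*}(s)$ and $\bN_s/\bL_s\cong\C_{\bG^*}(s)/\C^\circ_{\bG^*}(s)$. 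This equivalence is a Jordan decomposition: it restricts to a bijection $\cE(\bG^F,s)\to\Irr(\bN_s^F\mid\cE(\bL_s^F,1))$ and, being over an adequate local ring, preserves decomposition matrices. Hence it suffices to show that $\Irr(\bN_s^F\mid\cE(\bL_s^F,1))$ is a unitriangular basic set for $b_s$, and separately that $\cE(\bG^F,s)$ is $\Aut(\bG^F)_{B_s}$-stable.

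The automorphism-stability is quick: $B_s$ is the sum of the $\ell$-blocks lying in the geometric series $\cE_\ell(\bG^F,s)$, and these unions are pairwise disjoint for non-$(\bG^*)^F$-conjugate semisimple $\ell'$-elements, so any $\sigma\in\Aut(\bG^F)_{B_s}$ fixes the $(\bG^*)^F$-class of $s$ and thus stabilises $\cE(\bG^F,s)$, which is contained in $\Irr(B_s)$ as the part of $\cE_\ell(\bG^F,s)$ with trivial $\ell$-part. For the passage from $\bL_s^F$ to $\bN_s^F$, observe that since $s$ has $\ell'$-order, $\bN_s^F/\bL_s^F\cong\C_{\bG^*}(s)/\C^\circ_{\bG^*}(s)$ is an abelian $\ell'$-group by \cite[13.16]{CE04} (cyclic in types~(ii)--(iv), trivial for $\mathsf G_2$, $\mathsf F_4$, $\mathsf E_8$, and an elementary abelian $2$-group in type~(i) --- here $\ell$ being odd is used). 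The set $\cE(\bL_s^F,1)$ is $\bN_s^F$-stable because automorphisms preserve unipotent characters \cite[Prop.~2.6]{CS13}. Therefore Corollary~\ref{cor} reduces everything to showing that $\cE(\bL_s^F,1)$ is a unitriangular basic set for the sum of unipotent blocks of $\bL_s^F$.

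This last point is the core. Write $\bL_0=[\bL_s,\bL_s]$, which is simply connected semisimple since $\bG$ is, so $L_0:=\bL_0^F$ is a direct product $\prod_i\bH_i^{F_i}$ with each $\bH_i$ simple simply connected of a type occurring as a subdiagram of that of $\bG$. Under each of the hypotheses (i)--(iv) the bound on $\ell$ makes $\ell$ very good for every $\bH_i$, so for non-type-$\mathsf A$ factors $\cE(\bH_i^{F_i},1)$ --- the set of unipotent characters --- is a unitriangular basic set for their sum of unipotent blocks by \cite[Thm.~A]{BDT20}, while for type-$\mathsf A$ factors one uses \cite{De17} (or \cite{GH97} when $\ell$ is linear), whose basic set for the unipotent blocks of $\SL$ or $\SU$ is again $\cE(\bH_i^{F_i},1)$, being obtained by restriction from the general linear, respectively unitary, group. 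Taking products, $\cE(L_0,1)$ is a (automorphism-, hence $\bN_s^F$-stable) unitriangular basic set of the sum of unipotent blocks of $L_0$. It then remains to go up the abelian extension $L_0\unlhd\bL_s^F$: using $\bL_s=\bL_0\ZZ^\circ(\bL_s)$ and that unipotent characters of $\bL_s^F$ are trivial on the central torus $\ZZ^\circ(\bL_s)^F$ (they occur in $R^{\bL_s}_{\bT}(1)$), they factor through $\bL_s^F/\ZZ^\circ(\bL_s)^F$ and restrict compatibly with decomposition numbers to unipotent characters of $L_0$; splitting $\bL_s^F/L_0$ into its $\ell'$- and $\ell$-parts and noting that the $\ell$-part is not met by the supports of unipotent characters, Corollary~\ref{going-up-special}, applied with $Z=\ZZ^\circ(\bL_s)^F$, yields that
$\cE(\bL_s^F,1)=\Irr(\bL_s^F\mid\cE(L_0,1))\cap\Irr(\bL_s^F\mid 1_{(\ZZ^\circ(\bL_s)^F)_\ell})$
is a unitriangular basic set for the sum of unipotent blocks of $\bL_s^F$.

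The main obstacle is exactly this final step --- arranging that it is $\cE(\bL_s^F,1)$ \emph{itself}, rather than some induced or otherwise reconstructed basic set as in Theorem~\ref{thm_appl}, that is unitriangular. Concretely this splits into (a) checking, type by type for the exceptional cases and with some care for the $\mathsf A$-factors when $\ell$ divides the relevant $\gcd$ of rank and $q\mp1$, that the cited references genuinely deliver $\cE(\bH_i^{F_i},1)$ as the basic set of the unipotent blocks; and (b) the comparison of $\cE(\bL_s^F,1)$ with $\cE(L_0,1)$ across the possibly $\ell$-divisible quotient $\bL_s^F/L_0$, for which the hypothesis that $\C^\circ_{\bG^*}(s)$ is a genuine Levi (so that $\bL_s$ is a Levi with $\bL_0$ a product of simply connected simple groups) and the central-torus argument above are precisely what make the situation tractable.
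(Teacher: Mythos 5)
Your proposal follows the paper's strategy for the exceptional types (ii)--(iv) fairly faithfully, but in the classical case (i) it takes a genuinely different route that has a real gap. The paper's treatment of (i) does \emph{not} pass through $L_0=[\bL_s,\bL_s]^F$: for type~$\mathsf C$ it decomposes $\bL^F_s$ itself as a direct product of groups $\GL_{n_i}(q_i)$, $\GU_{n_i}(q_i)$, $\Sp_{2n'}(q)$, and for types~$\mathsf B$, $\mathsf D$ it first passes to the special Clifford group $\tbL^F$ via Corollary~\ref{going-up-special}, then mods out by the centre to land in a special orthogonal group, where the Levi is a product of $\GL/\GU$ and $\mathrm{SO}$ factors. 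In all of these the factors have trivial or $2$-torsion component group $\ZZ/\ZZ^\circ$, so since $\ell$ is odd, $\ell$ is automatically \emph{very good} for each factor and \cite[Thm.~A]{BDT20} (or \cite{Ge91} for $\GL/\GU$) gives the unipotent characters directly. By contrast, your decomposition into simply-connected factors $\bH_i^{F_i}$ produces $\SL/\SU$ factors $\SL_k(q_i)$ or $\SU_k(q_i)$, and for these one can have $\ell\mid\gcd(k,q_i\mp1)$ under no hypothesis stronger than $\ell$ odd.

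The gap is in the sentence where you claim the basic set of \cite{De17} for the unipotent blocks of $\SL_k$/$\SU_k$ is $\cE(\bH_i^{F_i},1)$. This is false when $\ell\mid\gcd(k,q_i\mp1)$: the Denoncin construction (recalled in the proof of Theorem~\ref{for-type-A} in this paper) starts from $\cE(\GL_k(\pm q_i),\ell')$ but then replaces symbols $\mathfrak s$ with $\ell^d:=\gcd(q_i\mp1,\Delta(\mu'))_\ell\ne 1$ by modified symbols $\mathfrak s'$ that lie in \emph{non-unipotent} Lusztig series $\cE(\GL_k, s)$ with $s$ an $\ell$-element; so on the unipotent block its basic set is not the set of unipotent characters, and the theorem of Geck--Hiss (which would otherwise give $\cE(\,\cdot\,,\ell')$ as a basic set) does not apply when $\ell\mid|\ZZ(\bH)/\ZZ^\circ(\bH)|$. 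You flag this as ``requiring some care'' in your final paragraph, but no resolution is offered, and in fact the claim as stated does not hold. Furthermore the subsequent application of Corollary~\ref{going-up-special} with $N=L_0$ and $Z=\ZZ^\circ(\bL_s)^F$ needs $\ell\nmid|Z\cap N|$, and $Z\cap N\supseteq \ZZ^\circ(\bL_s)^F\cap [\bL_s,\bL_s]^F$ can have order divisible by exactly this $\gcd$ (e.g.\ $\bG=\Sp_{2n}$, $\bL_s$ of type $\GL_n$, $\ell\mid\gcd(n,q-1)$), so that hypothesis also fails. The paper's detour through the conformal/orthogonal realizations of $\bL_s^F$ is precisely what sidesteps both of these obstacles; your argument as written does not.
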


\begin{proof} Note that automorphisms of $\bG^F$ stabilize the set $\cE(\bG^F,\ell ')$ as can be seen from the action on generalized characters R$_\bT^\bG(\theta)$ (see \cite[9.2]{DM90}). It is then clear that automorphisms of $\bG^F$ that stabilize $\Irr(B_s)$ will also preserve $\cE(\bG^F,\ell ')\cap\Irr(B_s)=\cE(\bG^F,s)$. So we now concentrate on the first part of our claim.
	
Recall $\bL_s$ from the above proof, a Levi subgroup of $\bG$ dual to $\C_{\bG^*}^\circ(s)$. We abbreviate $\bL=\bL_s$.

Let us start with the case of type $\mathsf C$ where $\bG^F$ is a symplectic group Sp$_{2n}(q)$ ($n\geq 2$).
Thanks to standard calculations (see for instance \cite[p.~126]{FS89}) $\bL^F$
is a direct product of groups $H_i$ where each $H_i$ is either isomorphic to some $\GL_{n_i}(q_i)$, $\GU_{n_i}(q_i)$ or to some $\Sp_{2n'}(q)$ for $q_i$ a power of $q$ and $n_i, n'\le n$. 
Every unipotent block of $\bL^F$ is hence the direct product of unipotent blocks of these factors.

The unipotent characters of $H_i$ form an $\Aut(H_i)$-stable unitriangular basic set by \cite[Cor. B]{Ge91} for the groups $\GL_{n_i}(q_i)$, $\GU_{n_i}(q_i)$ and by Proposition \ref{BDT+} in the other case. 
Hence the unipotent characters $\cE(\bL^F,1)$ of $\bL^F$ form a unitriangular basic set for $B_1(\bL^F)$
the sum of unipotent blocks of $\bL^F$. Recall from the proof of Theorem~\ref{thm_appl} the sum $b_s$ of blocks of $\bN_s^F$ covering $B_1(\bL^F)$. Since $b_s$ is $\bN_s^F$-stable, one has $\Irr(b_s)=\Irr(\bN_s^F\mid\Irr(B_1(\bL^F)))$,  $\IBr(b_s)=\IBr(\bN_s^F\mid\IBr(B_1(\bL^F)))$ and Corollary \ref{cor} implies that
$\Irr(\bN^F_s\mid \cE(\bL^F,1))$ forms a unitriangular basic set for $b_s$. 
The Bonnaf\'e--Dat--Rouquier Morita equivalence (cf. Example 7.9 of \cite{BDR17}) maps the rational Lusztig series $\cE(\bG^F ,s)$ to $\Irr(\bN^F_s\mid \cE(\bL^F,1))$. This completes our proof in that case.

Let us assume now that $\bG$ has type $\mathsf{B}$ or $\mathsf{D}$, so that $\bG^F=\Spin(V)$ for $V$ an orthogonal space over $\F_q$. Let $\tG:=\tbG^F$ be the corresponding special Clifford group over $V$.
If $\iota:\bG\hookrightarrow\tbG'$ is the regular embedding, then $\tbG\le \tbG'$; and $\tbG=\tbG'$ unless $\bG$ is of type ${\mathsf D}$.
Using \cite[Thm.~1.1]{BDR17} as in the above proof for type $\mathsf{C}$, it suffices to show that $\cE(\bL^F,1)$ forms a unitriangular basic set for the sum of unipotent blocks of $\bL^F$. 
Let $\tbL=\bL \ZZ(\tbG)$.
Note that $\iota$ induces a surjective morphism $\iota^*_0:{\tbG}^*\to\bG^*$ and we let $\ts\in(\tbG^*)^F$ be a semisimple $\ell'$-element such that $\iota^*_0(\ts)=s$ and $\ts\in\tbL^*$.
By Corollary \ref{going-up-special} (which is similar to the proof of Proposition \ref{prop-equiva}),  $\cE(\bL^F,1)$ is a unitriangular basic set if and only if $\cE(\tbL^F,1)$ is a unitriangular basic set since $|\tbL^F/\bL^F\ZZ(\tbG^F)|\le 2$.

Let $\tG_0:=\mathrm{SO}(V)$.
Then there is a natural epimorphism $\pi:\tG\to\tG_0$. Let $\tL_0=\pi(\tbL^F)$.
Note that each unipotent character of $\tbL^F$ has $\ZZ(\tG)$ in its kernel. 
Therefore, $\cE(\tbL^F,1)$ can be identified with $\cE(\tL_0,1)$, which is a basic set of the the sum of unipotent blocks of $\tbL^F$ (and of $\tL_0$).
So it suffices to show that $\cE(\tL_0,1)$ form a unitriangular basic set for the sum of unipotent blocks of $\tL_0$. 

Through easy calculations (see also \cite[\S3]{FS89}) we see that $\tL_0$
is a direct product of groups $H_i$ where each $H_i$ is either isomorphic to some
general linear or unitary groups over $\F_{q_i}$ (with $\F_{q_i}\supseteq \F_{q}$) and a special orthogonal group over $\F_q$. 
Every unipotent block of $\tL_0$ is hence the direct product of unipotent blocks of these factors.
Thus our claim holds by the same argument as in the case of type $\mathsf{C}$. This finishes the proof for the whole case (i).

We now turn to the exceptional types under assumptions (ii)--(iv).
As above, we content ourselves with showing that $\cE(\bL^F,1)$ forms a unitriangular basic set for the sum of the unipotent blocks of $\bL^F$.
As observed in the proof of Theorem~\ref{thm_appl} the group $[\bL,\bL]$ is the direct product of simply-connected simple groups and the Dynkin diagram of $[\bL,\bL]$ is a subdiagram of the Dynkin diagram of $\bG$.
Then as before
$L_0=[\bL,\bL]^F$ is the direct product of groups $\bH^{F'}$, where $\bH$ is of simply connected type for which $\ell$ is good under our assumptions.
In addition, if $\bH$ is of type $\mathsf A$, then $\ell>\mathrm{rank}(\bH)+1$ and thus $\ell\nmid |\ZZ(\bH^{F'})|$.
So $\cE(L_0,1)$ forms a unitriangular basic set for the sum of the unipotent blocks of $L_0$ by \cite[Thm.~A]{BDT20}.
Since $\ell\nmid |\bL^F/L_0\ZZ(\bL^F)|$, Corollary \ref{going-up-special} implies that $\cE(\bL^F,1)$ forms a unitriangular basic set for the sum of the unipotent blocks of $\bL^F$.

This completes our proof.	
\end{proof}

\begin{rmk}\label{RemIBr}
 In view of the criterion from \cite{BS20} of the inductive blockwise Alperin weight condition from \cite{Sp13} the above shows that, for many blocks $B$ of finite quasi-simple groups $G$ of Lie type, $\IBr(B)$ is $\Aut(G)_B$-permutation
isomorphic to a subset of $\Irr(G)$.
This is needed in \cite{FLZ21} and \cite{Li21} in the verification of the inductive blockwise Alperin weight condition for types $\mathsf B$ and $\mathsf C$.

More generally, with the assumptions of 
Proposition~\ref{appl_C}, it is easy to see that $\IBr(B_s(G))$ satisfies assumption (ii) of Theorem~\ref{criforlie}. Indeed for $\phi\in\IBr(B_s(G))$ there is some $f^{-1}(\phi)$ in our basic set associated through an $\Aut(G)_{B_s(G)}$-equivariant bijection $f$. The stabilizer property of Theorem~\ref{criforlie}(ii) is then a consequence of the same property satisfied by $\chi:=f^{-1}(\phi)$. After a suitable $\tG$-conjugation we may now assume that $\phi$ and $\chi$ satisfy $(\tG D)_\phi =\tG_\phi D_\phi=(\tG D)_\chi =\tG _\chi D_\chi$, and we have to check that $\phi$ extends to $G D_\phi=G D_\chi$. We know that $\chi$ extends into $\widetilde{\chi}\in \Irr(G D_\phi)$ while ${\chi}^0$ has $\phi$ as an irreducible component with multiplicity 1 by the unitriangularity property. Then $\widetilde{\chi}^0$ has a constituent $\widetilde{\phi}\in\IBr(G D_\phi)$ whose restriction to $G $ has $\phi$ as constituent with multiplicity 1. But then by Clifford theorem $\Res_G^{\tG}\widetilde{\phi}=\phi$ since $\phi$ is $G D_\phi$-invariant.
Note that this is in fact used in the proof of \cite[Prop.~8.1]{FLZ21a}.
\end{rmk}

\medskip

As Conjecture~\ref{GeckC} we have recalled Geck's conjecture that all $\ell$-blocks of (quasi-simple) groups of Lie type of characteristic $\neq\ell$ have a unitriangular decomposition matrix. 
We consider the following weaker conjecture. Recall that a semi-simple element $s$ of a reductive group $\bf K$ is called {\it isolated} if $\C_{\bf K}^\circ(s)$ is included in no proper Levi subgroup of $\bf K$, or equivalently $\ZZ^\circ(\C_{\bf K}^\circ(s))\subseteq \ZZ(\bf K)$.

We keep a prime number $\ell$.

\medskip\noindent{\bf Conjecture~3.}\ {\it 
	For every simply-connected simple group $\bG$ in characteristic $\not=\ell$ with Frobenius
	endomorphism $F$ and every semi-simple isolated element $t \in (\bG^*)^{F}_{\ell '}$, the sum of $\ell$-blocks $B_t$ of $\bG^{F}$ associated to $t$ has an $\Aut(\bG^{F})_{B_t}$-stable unitriangular basic set. }

\medskip
With the considerations of the proof of Theorem \ref{thm_appl} we show that this conjecture implies Geck's more general conjecture in the cases where \cite{BDR17} and \cite{Ru20} apply.

\begin{prop}\label{RedIso} Assume Conjecture 3.
Let $\bG$ be a simply-connected simple algebraic group with Frobenius endomorphism $F : \bG\to \bG$ defining an $\F_q$-structure on $\bG$ and $s\in (\bG^*)^F$ be a semisimple $\ell'$-element. 
If $\C_{\bG^*}(s)/\C_{\bG^*}^\circ(s)$ is cyclic or $\ell\nmid (q^2-1)$, then the sum of blocks of $\bG^F$ associated to $s$ has a unitriangular basic set.	
\end{prop}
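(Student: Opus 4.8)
The plan is to reduce, via the Bonnaf\'e--Dat--Rouquier Morita equivalence and Ruhstorfer's extension of it to the relevant bad-prime situations, to a statement about unipotent blocks of a (possibly disconnected) group, and then to ``glue'' in the information about isolated blocks supplied by Conjecture~3 on the simple factors of the derived subgroup of the relevant Levi. Concretely, I would first invoke \cite[Thm.~1.1]{BDR17} together with \cite{Ru20} (exactly as in the proof of Theorem~\ref{thm_appl}) to obtain that $B_s$ is Morita equivalent---hence has the same decomposition matrix---to a sum of blocks $b_s$ of $\bN_s^F$ covering the sum of unipotent blocks of $\bL_s^F$, where $\bL_s$ is an $F$-stable Levi subgroup of $\bG$ dual to $\C_{\bG^*}^\circ(s)$ and $\bN_s/\bL_s\cong\C_{\bG^*}(s)/\C_{\bG^*}^\circ(s)$. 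Since $s$ has $\ell'$-order, $\bN_s^F/\bL_s^F$ is an $\ell'$-group by \cite[13.16~(i)]{CE04}, and it is cyclic or $\ell\nmid(q^2-1)$ by hypothesis; in the latter case one also gets abelianity of the relevant part of the quotient, so the hypotheses of Theorem~\ref{TriangUp} and of Corollary~\ref{cor} will be available.

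\textbf{The Levi step.} Next I would treat $\bL_s^F$. Writing $L_0=[\bL_s,\bL_s]^F$, by \cite[12.14]{MT11} this is a direct product of groups $\bH^{F'}$ with $\bH$ simply connected simple. Here is where Conjecture~3 enters: the unipotent blocks of $\bH^{F'}$ are exactly the $B_t$ with $t=1$, which is isolated, so each such $\bH^{F'}$ has an $\Aut(\bH^{F'})$-stable unitriangular basic set for its sum of unipotent blocks. (Strictly, $t=1$ gives only unipotent blocks, but that is all we need here, since $\bN_s$ covers only unipotent blocks of $\bL_s^F$; the full strength ``isolated'' is needed because in general $[\bL_s,\bL_s]$ may be proper and the relevant blocks of $\bG$ over $s$ correspond under BDR only to unipotent blocks of the smaller group.) Taking products and using that $\bN_s^F$-conjugacy permutes only factors of the same type---the argument via roots and the Weyl group in the proof of Theorem~\ref{thm_appl}---produces an $\bN_s^F$-stable unitriangular basic set $\cB_0$ of the sum of unipotent blocks of $L_0$. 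Then, exactly as in the proof of Theorem~\ref{thm_appl}, I would climb from $L_0$ to $\bL_s^F$ in two steps: first to the unique $L_1$ with $L_0\le L_1\le\bL_s^F$ and $|\bL_s^F/L_0|_\ell=|\bL_s^F/L_1|$ via Theorem~\ref{TriangUp} (using that maximal extendibility holds for $L_0\unlhd\bL_s^F$, see \cite[15.11]{CE04}), and then past the $\ell'$-part via the induction/extension-map construction of Theorem~\ref{TriangUp} and Remark~\ref{remTri}~(ii), choosing the extension map $\bN_s^F$-equivariantly by \cite[11.31]{Is76} since $|\bN_s^F/\bL_s^F|$ is coprime to $|\bL_s^F/L_1|$. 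This yields an $\bN_s^F$-stable unitriangular basic set of $\bL_s^F$ for the sum of its unipotent blocks.

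\textbf{Going up to $\bN_s^F$ and conclusion.} Finally, one more application of Theorem~\ref{TriangUp} (or of Corollary~\ref{cor} when $\bN_s^F/\bL_s^F$ is an $\ell'$-group) to the normal inclusion $\bL_s^F\unlhd\bN_s^F$ gives a unitriangular basic set for $b_s$, and transporting back through the Morita equivalence gives one for $B_s$, which is the claim. I expect the main obstacle to be purely organizational rather than conceptual: one must check carefully that all the hypotheses of Theorem~\ref{TriangUp}---$G$-stability of the basic sets, abelianity of the successive quotients, and maximal extendibility at each stage---are genuinely met, and in particular that the case distinction ``$\C_{\bG^*}(s)/\C_{\bG^*}^\circ(s)$ cyclic or $\ell\nmid(q^2-1)$'' is exactly what is needed to make the quotient $\bN_s^F/\bL_s^F$ amenable (cyclicity gives extendibility of $\bN_s$-stable characters directly via the inertial quotient, while $\ell\nmid(q^2-1)$ forces that quotient to be an $\ell'$-group of small enough shape that Corollary~\ref{cor} applies without needing extension maps). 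A secondary subtlety is that one is invoking Conjecture~3 only for $t=1$ on the simple factors $\bH$; it should be remarked that this is legitimate because those factors inherit goodness of $\ell$ from $\bG$ (see \cite[13.10]{CE04}) is not needed here since Conjecture~3 is assumed unconditionally in $\ell\neq p$, so no residual hypothesis on $\ell$ survives beyond $\ell\nmid q$.
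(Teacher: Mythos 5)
Your proposal has a genuine gap, and it is a conceptual one rather than merely organizational.

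You run the Bonnaf\'e--Dat--Rouquier reduction ``exactly as in the proof of Theorem~\ref{thm_appl}'', taking $\bL_s$ dual to $\C_{\bG^*}^\circ(s)$ and reducing to a sum of blocks of $\bN_s^F$ covering the \emph{unipotent} blocks of $\bL_s^F$. But that argument presupposes that $\C_{\bG^*}^\circ(s)$ is a Levi subgroup of $\bG^*$, which is one of the hypotheses of Theorem~\ref{thm_appl} and is \emph{dropped} in Proposition~\ref{RedIso}. When $\C_{\bG^*}^\circ(s)$ is not a Levi, the group ``$\bL_s$ dual to $\C_{\bG^*}^\circ(s)$'' is not an $F$-stable Levi of $\bG$ and your reduction does not start. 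The paper's proof instead sets $\bL_s^*:=\C_{\bG^*}(\ZZ^\circ(\C_{\bG^*}^\circ(s)))$, the minimal $F$-stable Levi of $\bG^*$ containing $\C_{\bG^*}^\circ(s)$, in which $s$ becomes isolated but is in general \emph{not} central. The Morita equivalence then takes $B_s$ to blocks of $\bN_s^F$ covering the \emph{isolated} blocks of $\bL_s^F$ associated to $s$, not the unipotent ones.

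The consequence is that you invoke Conjecture~3 only for $t=1$ on the simple factors $\bH_i^{F'}$ of $[\bL_s,\bL_s]^F$, which is just Proposition~\ref{BDT+}/\cite{BDT20} again. If that sufficed, Proposition~\ref{RedIso} would not require Conjecture~3 at all — a red flag that the reduction step is off. The paper's proof, after choosing the correct $\bL_s^*$, pushes $\pi(s)$ down to elements $t_i\in(\bH_i^*)^{F}$ and observes (via \cite[13.13(iii)]{CE04}) that each $t_i$ is isolated in $\bH_i^*$; since in general $t_i\ne 1$, this is precisely where the full strength of Conjecture~3 (isolated, not merely unipotent, blocks) is needed. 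Your parenthetical ``the full strength `isolated' is needed because $[\bL_s,\bL_s]$ may be proper'' does not resolve this: a proper derived subgroup does not produce isolated $t_i\ne 1$; the non-Levi centralizer does. Once you replace $\bL_s$ by the minimal Levi containing $\C_{\bG^*}^\circ(s)$ and apply Conjecture~3 to the resulting isolated $t_i$, the remaining going-up steps (from $L_0$ to $L_1$ to $\bL_s^F$ to $\bN_s^F$) that you describe are indeed the same as in the proof of Theorem~\ref{thm_appl} and go through unchanged, as the paper states.
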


\begin{proof}
The proof of Theorem \ref{thm_appl} can be adapted to the sum of blocks associated to $s$. We recall the Levi subgroup $\bL_s^*=\C_{\bG^*}(\ZZ^\circ(\C_{\bG^*}^\circ(s)))$ where the element $s$ is isolated. We still denote by $\bL_s$ a dual and $\bL_0:=[\bL_s,\bL_s]$.
Let $c_0$ be the block of $\bL_0^F$ covered by $c_s$, the sum of blocks of $\bL_s^F$ associated to $s$. 
Let $\pi$ be the corresponding epimorphism $\bL^*_s\to\bL^*_0$.
We denote by $\bH_i$ the product of simply-connected simple groups that are permuted transitively by $F$.
The decomposition of $\bL_0$ into a direct product of simply-connected groups implies that analogously a quotient of $\bL_s^*$ is a central product of adjoint groups. 
The element $\pi(s)$ accordingly decomposes along those factors and can be written as product of the elements $t_i\in \bH_i^*{}^F$. The block $c_0$ is the product of blocks $c_i$ of $\bH_i^F$, that corresponds to elements $t_i$. Each $t_i$ is isolated in $\bH_i^*$ since connected centralizers correspond through $\pi$ (see for instance \cite[13.13(iii)]{CE04}). Hence by our assumption $c_i$ has an $\Aut(\bH^F_i)_{c_i}$-stable unitriangular basic set.

The rest of the proof of Theorem~\ref{thm_appl} applies without altering and thereby shows the statement.
\end{proof}

\section*{Acknowledgements}  
  
The first author would like to express his deep thanks to Jiping Zhang for his support and numerous suggestions.
The second author would like to thank Gabriel Navarro for some fruitful discussions on basic sets. 
Finally both authors are grateful to Gunter Malle and the referees for their careful reading, remarks and suggestions that led to greatly improve this paper.

\end{document}